\renewcommand\normalsize{%
    \@setfontsize\normalsize{11.7}{14pt plus .3pt minus .3pt}%
    \abovedisplayskip 10\p@ \@plus4\p@ \@minus4\p@
    \abovedisplayshortskip 6\p@ \@plus2\p@
    \belowdisplayshortskip 6\p@ \@plus2\p@
    \belowdisplayskip \abovedisplayskip}
\renewcommand\small{%
    \@setfontsize\small{9.5}{12\p@ plus .2\p@ minus .2\p@}%
    \abovedisplayskip 8.5\p@ \@plus4\p@ \@minus1\p@
    \belowdisplayskip \abovedisplayskip
    \abovedisplayshortskip \abovedisplayskip
    \belowdisplayshortskip \abovedisplayskip}
\renewcommand\footnotesize{%
    \@setfontsize\footnotesize{8.5}{9.25\p@ plus .1pt minus .1pt}
    \abovedisplayskip 6\p@ \@plus4\p@ \@minus1\p@
    \belowdisplayskip \abovedisplayskip
    \abovedisplayshortskip \abovedisplayskip
    \belowdisplayshortskip \abovedisplayskip}
\numberwithin{equation}{section}
\newtheorem{theorem}{Theorem}[section]
\newtheorem{lemma}[theorem]{Lemma}
\newtheorem{corollary}[theorem]{Corollary}
\newtheorem{proposition}[theorem]{Proposition}
\theoremstyle{remark}
\newtheorem{remark}[theorem]{Remark}
\newtheorem{claim}[theorem]{Claim}
\newtheorem*{theorem*}{Theorem}
\theoremstyle{definition}
\newtheorem{definition}[theorem]{Definition}
\theoremstyle{definition}
\newcommand{\II}{\mathbb{I}}
\newcommand{\MM}{\mathbb{M}}
\newcommand{\NN}{\mathbb{N}}
\newcommand{\RR}{\mathbb{R}}
\newcommand{\ZZ}{\mathbb{Z}}
\newcommand{\cC}{\mathcal C}
\renewcommand{\cD}{\mathcal D}
\newcommand{\cF}{\mathcal F}
\newcommand{\cG}{\mathcal G}
\renewcommand{\cH}{\mathcal H}
\newcommand{\cK}{\mathcal K}
\newcommand{\cP}{\mathcal P}
\renewcommand{\cR}{\mathcal R}
\newcommand{\cS}{\mathcal S}
\newcommand{\cT}{\mathcal T}
\newcommand{\bx}{\mathbf{x}}
\newcommand{\by}{\mathbf{y}}
\newcommand{\bOh}{\mathbf{0}}
\newcommand{\eps}{\varepsilon}
\DeclareMathOperator{\sing}{sing}
\DeclareMathOperator{\reg}{reg}
\DeclareMathOperator{\spt}{spt}
\DeclareMathOperator{\spine}{spine}
\DeclareMathOperator{\dist}{dist}
\DeclareMathOperator{\Graph}{graph}
\DeclareMathOperator{\Dil}{Dil}
\newcommand{\mres}{\lfloor}
\title[Generic regularity for min.\ hyp.\ in dimensions $9$ and $10$]{Generic regularity for minimizing hypersurfaces in dimensions 9 and 10}
\author{Otis Chodosh} 
\address{OC: Department of Mathematics, Bldg.\ 380, Stanford University, Stanford, CA 94305, USA}
\email{ochodosh@stanford.edu}
\author{Christos Mantoulidis} 
\address{CM: Department of Mathematics, Rice University, Houston, TX 77005, USA}
\email{christos.mantoulidis@rice.edu}
\author{Felix Schulze}
\address{FS: Department of Mathematics, Zeeman Building, University of Warwick, Gibbet Hill Road, Coventry CV4 7AL, UK}
\email{felix.schulze@warwick.ac.uk} 
\begin{document}

\begin{abstract}
We prove that singularities of area minimizing hypersurfaces can be perturbed away in ambient dimensions $9$ and $10$.
\end{abstract}

\maketitle

\section{Introduction}

Let $\Gamma$ be a smooth, closed (compact and boundaryless), oriented, $(n-1)$-dimensional submanifold of $\RR^{n+1}$. Among all smooth, compact, oriented hypersurfaces $M \subset \RR^{n+1}$ with $\partial M = \Gamma$, does there exist one with \textit{least area}? 

Foundational results in geometric measure theory (see \cite{Federer:GMT, MassariMiranda,Giusti,Maggi:finite-per}) can be used to produce an integral $n$-current $T$ with least mass among all those with boundary equal to the multiplicity-one current represented by $\Gamma$. When $n+1 \leq 7$, it is known that $T$ is supported on a smooth area-minimizing hypersurface that solves the original differential geometric problem (see \cite{Fleming:plateau, DeGiorgi:bernstein, Almgren:regularity, Simons:minvar, Hardt-Simon:boundary-regularity}). When $n+1 \geq 8$, smooth minimizers can fail to exist  (see \cite{BDG:Simons}) but it is nevertheless known that away from a closed set $\sing T \subset \RR^{n+1}\setminus\Gamma$ of Hausdorff dimension $\leq n-7$, the support of $T$ will be a smooth hypersurface with boundary $\Gamma$ (see \cite{Federer:dimension-reduction, Hardt-Simon:boundary-regularity}).

A fundamental result of Hardt--Simon \cite{Hardt-Simon:isolated-singularities} shows that the singularities (necessarily isolated points) of $7$-dimensional minimizing currents in $\RR^8$ can be \emph{eliminated} by a slight perturbation of the boundary, $\Gamma$, thus yielding solutions to the original differential geometric problem. It has been a longstanding conjecture that similar results hold in higher dimensions (see \cite[Problem 108]{Yau:problems}, \cite[Problem 5.16]{Brothers:open-problems}, \cite[Conjecture 16]{Gromov:101}), where the singular set can have positive Hausdorff dimension and can be a priori quite complicated.\footnote{See \cite{Simon:stable.sing} for complicated singular sets of stable (rather than minimizing) hypersurfaces.} In this paper we use a perturbative approach motivated from our work on generic regularity in mean curvature flow\footnote{In our joint work with Choi \cite{CCMS:low-ent-gen}(which was a follow-up to \cite{CCMS:generic1}) we discovered a strong analogy between mean curvature flow of generic initial data and the generic regularity and uniqueness for area-minimzers. See also the earlier foundational work of Colding--Minicozzi on generic mean curvature flow \cite{ColdingMinicozzi:generic}.} to obtain a generic regularity result for minimizers in ambient dimensions $9$ and $10$. 

Our main theorem is:

\begin{theorem}[Generic regularity for the Plateau problem]\label{theo:main.plateau}
Let $n+1 \in \{8,9,10\}$. Consider a smooth, closed, oriented, $(n-1)$-dimensional submanifold $\Gamma \subset \RR^{n+1}$. There exist $C^\infty$-small  perturbations $\Gamma'$ of $\Gamma$ (as graphs in the normal bundle of $\Gamma$) with the property that there exists a least-area smooth, compact, oriented hypersurface $M' \subset \RR^{n+1}$ with $\partial M' = \Gamma'$.
\end{theorem}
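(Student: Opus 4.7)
The $n+1 = 8$ case is immediate from Hardt--Simon \cite{Hardt-Simon:isolated-singularities}, so I focus on $n+1 \in \{9, 10\}$, where $\dim_{\cH}\sing T \leq n-7 \in \{1,2\}$. My plan is an iterative dimension-reduction strategy: at each step I perturb $\Gamma$ slightly so that the resulting minimizer has a singular set of strictly lower Hausdorff dimension, and I iterate until the singular set is empty. The overall structure mirrors the stratum-by-stratum resolution in the authors' generic regularity work for mean curvature flow.

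The first ingredient is a local analysis of singularities. Any tangent cone $C_x$ at $x \in \sing T$ is an $n$-dimensional area-minimizing hypercone, and by Federer dimension reduction it splits as $C_x = C_x' \times \RR^{k(x)}$ with $C_x'$ carrying an isolated singularity at the origin; one stratifies $\sing T$ by the integer $k(x) \in \{0,\ldots,n-7\}$. For each such $C_x'$, the Hardt--Simon one-sided foliation produces smooth strictly minimizing hypersurfaces asymptotic to $C_x'$ on either side, which serve as the model smooth resolution of the singularity. The second ingredient is a one-parameter perturbation: I embed $\Gamma$ as $\Gamma_0$ in a smooth family $\Gamma_s$ tailored to the top stratum of $\sing T$, take minimizers $T_s$, and study how $\sing T_s$ varies with $s$. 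Using strict stability of the link of $C_x'$ and a Lojasiewicz-type decay estimate for $T_s$ toward its tangent cone, I expect to show that the space--time singular set $\bigcup_s \{s\} \times \sing T_s$ has Hausdorff dimension at most $n-6$, so by slicing $\dim \sing T_s$ drops by one for a.e.\ sufficiently small $s$. Iterating at most $n-6$ such steps sends $\sing T$ to $\emptyset$, while keeping the accumulated boundary perturbation $C^\infty$-small.

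The main obstacle is carrying out the one-parameter reduction at a cylindrical stratum, where $k(x) \geq 1$. Unlike Hardt--Simon's isolated setting, the resolving foliation now carries extra translation symmetries along $\RR^{k(x)}$, and one must arrange for a single perturbation parameter to knock the minimizer onto a transverse Hardt--Simon leaf simultaneously along the whole positive-dimensional stratum. This will require a quantitative Hardt--Simon theory that is uniform along the $\RR^{k(x)}$ direction, together with a careful inductive choice of $\Gamma_s$ adapted to the local geometry of the stratum; the Lojasiewicz-type decay is the technical heart and is where strict minimizing--strict stability of the cross-section cones enters. The bound $k(x) \leq n-7 \leq 2$ is what makes this parameter count feasible in dimensions $9$ and $10$, and I would expect the scheme (in this simple form) to fail in $n+1 \geq 11$ for exactly this reason.
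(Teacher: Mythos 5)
Your proposal has genuine gaps, and the most serious one is structural. First, the local model you posit is not available: a tangent cone at a singular point of a minimizer in $\RR^{9}$ or $\RR^{10}$ need \emph{not} split as $C_x'\times\RR^{k(x)}$ with $C_x'$ having an isolated singularity. The spine (the set of points of top density) is a subspace of dimension $\leq n-7$, but $\sing \cC$ can be strictly larger than $\spine\cC$, so the cross-sectional cone may itself be singular away from the origin. Consequently the Hardt--Simon foliation theorem, which requires $\sing\cC=\{\bOh\}$ and whose uniqueness statement is unknown for general singular cones, cannot serve as ``the model smooth resolution'' along your strata; you flag this cylindrical case as ``the main obstacle'' but do not resolve it, and it is precisely where the known technology breaks. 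The paper circumvents this by replacing the foliation with a density-drop/cone-splitting dichotomy (Proposition \ref{prop:dens-drop-sing}), valid for \emph{arbitrary} nonflat minimizing cones: any disjoint minimizer has density $\leq\Theta_\cC(\bOh)$ everywhere, with equality only for $\pm\cC$ at spine points.

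Second, an iteration that lowers the Hausdorff dimension of $\sing T$ cannot terminate: $\dim_{\cH}\sing T=0$ does not imply $\sing T=\emptyset$, so ``iterating at most $n-6$ steps sends $\sing T$ to $\emptyset$'' has no final step. The paper instead iterates on the quantized quantity $\cD(T)=\sup_{\sing T}\Theta_T$, which drops by a fixed $\eta(n)>0$ per perturbation and is bounded below by $\Theta_n^*>1$ on any nonempty singular set (Allard), forcing termination after finitely many steps. Third, your slicing arithmetic does not help: a spacetime singular set of dimension $\leq n-6$ has a.e.\ slices of dimension $\leq n-7$, which is already known. The mechanism that actually lets a one-parameter family suffice is a \emph{superlinear} H\"older estimate for the parameter value attached to near-top-density singular points: the exponent $\lambda_0>2$ comes from the decay rate $\kappa_n^*>1$ of positive Jacobi fields on minimizing cones (Simon, Wang), and it compresses the $(n-7)$-dimensional spatial spine into a set of parameters of ``dimension'' $(n-7)/\lambda_0$, which is $<1$ exactly when $n\leq 9$. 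With merely Lipschitz (linear) control the bad parameter set could exhaust the whole interval; this, and not the count $k(x)\leq 2$, is the true source of the dimensional restriction, and it is absent from your outline.
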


See Theorem \ref{theo:main.plateau.baire} for our result in the language of integral $n$-currents that implies the geometric theorem above. The case $n+1=8$ in the special setting of minimizing boundaries is due to Hardt--Simon \cite[Section 5]{Hardt-Simon:isolated-singularities}.\footnote{The setting of boundaries applies, e.g., when $\Gamma$ is connected; see \cite[Corollary 11.2]{Hardt-Simon:boundary-regularity}.} These results can be extended in a straightforward manner (which we omit) to cover area minimization in smooth, closed, oriented $(n+1)$-dimensional Riemannian manifolds (in place of $\RR^{n+1}$) provided $\Gamma$ is homologically trivial. 

The work of Hardt--Simon on $7$-dimensional minimizing currents in $\RR^8$ was then used by Smale \cite{Smale:generic} to show that given an $8$-dimensional smooth closed oriented Riemannian manifold $(N,g)$ and a nonzero homology class $[\alpha] \in H_7(N^{8},\ZZ)$, the area-minimizer in $[\alpha]$ can be taken to be smooth as long as we perturb the metric appropriately.\footnote{See \cite{MazzeoSmale} for some related work.} We extend this result to ambient $9$ and $10$ dimensional manifolds, too:

\begin{theorem}[Generic regularity for homology minimizers]\label{theo:main.homology}
Let $n+1 \in \{8,9,10\}$. Consider a closed, oriented, $(n+1)$-dimensional Riemannian manifold $(N, g)$. There exist $C^\infty$-small perturbations $g'$ of $g$ with the property that for every nonzero $[\alpha] \in H_n(N, \ZZ)$ there exist disjoint, smooth, closed, oriented hypersurfaces $M_1, \ldots, M_Q \subset N$ and $k_1, \ldots, k_Q \in \ZZ$ so that $\sum_{i=1}^Q k_i \llbracket M_i\rrbracket$ is of least area, with respect to $g'$, in $[\alpha]$.
\end{theorem}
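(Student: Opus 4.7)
The plan is to adapt Smale's \cite{Smale:generic} Baire category argument from ambient dimension $8$ to $9$ and $10$, using Theorem \ref{theo:main.plateau} (in its Riemannian variant noted after the statement) in place of Hardt--Simon's Plateau regularity. Since $H_n(N,\ZZ)$ is finitely generated and hence countable, and the space $\cM$ of $C^\infty$ metrics near $g$ is a Baire space, it suffices to show that for each fixed nonzero class $[\alpha] \in H_n(N, \ZZ)$, the set $\cG_{[\alpha]} \subset \cM$ of metrics admitting a smooth area minimizer in $[\alpha]$ is comeagre; the theorem then follows by choosing any metric in the dense intersection $\bigcap_{[\alpha]} \cG_{[\alpha]}$.

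To establish density of $\cG_{[\alpha]}$, I would fix a $g$-area-minimizing integral current $T$ in $[\alpha]$. By interior regularity, $\sing T$ is closed of Hausdorff dimension $\leq n - 7 \leq 2$. A Sard-type slicing argument applied to a smoothed distance function to $\sing T$ produces an open neighborhood $U \supset \sing T$ with smooth boundary such that $\Gamma := \partial(T \mres U) \subset \partial U$ is a smooth oriented $(n-1)$-submanifold (possibly with integer multiplicities on its components). The restriction $T \mres U$ is then area-minimizing among integral $n$-currents in $\overline U$ with boundary $\Gamma$, and $\Gamma$ is null-homologous in $\overline U$ since it bounds $T \mres U$. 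The Riemannian extension of Theorem \ref{theo:main.plateau} thus yields a $C^\infty$-small perturbation $\Gamma' \subset \partial U$ of $\Gamma$ bounding a smooth least-area integral $n$-current $M'_U \subset \overline U$.

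To globalize, I would pick a diffeomorphism $\phi$ of $N$, supported in a thin collar of $\partial U$ and $C^\infty$-close to the identity, which carries $\Gamma$ onto $\Gamma'$ and is chosen to match oriented tangent planes so that no spurious crease appears along $\Gamma'$. The integral current
\[
\tilde T := \phi_\# (T \mres (N \setminus U)) + M'_U
\]
is then a cycle representing $[\alpha]$ (since $\phi$ is isotopic to the identity) whose support is a disjoint union of smooth closed hypersurfaces with integer multiplicities. Setting $g_1 := (\phi^{-1})^* g$ absorbs the collar distortion of $\phi$, and a further small conformal perturbation $g' := e^{2f} g_1$, with $f$ strictly negative on $\spt \tilde T$ and supported in a thin neighborhood of it, makes $\tilde T$ strictly area-minimizing in $[\alpha]$ with respect to $g'$. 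Since smooth, strictly stable minimizers persist under $C^\infty$-small metric perturbations via a standard implicit-function/compactness argument, $\cG_{[\alpha]}$ contains an open neighborhood of $g'$, upgrading density to comeagreness.

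The main technical obstacle is the globalization step: one must arrange the gluing along $\Gamma'$ without introducing an $(n-1)$-dimensional crease (which would be forbidden by Federer's dimension bound on any minimizer) and quantitatively balance the area excess introduced by the collar distortion and by the Plateau perturbation $\Gamma \to \Gamma'$ against the size of the conformal bump $f$, so that the resulting $g'$-minimizer in $[\alpha]$ actually equals $\tilde T$. The first issue is handled by choosing $\phi$ to also match a sufficient number of normal derivatives along $\Gamma'$, using boundary regularity for the Plateau problem together with the arbitrary $C^\infty$-smallness of $\Gamma'$ provided by Theorem \ref{theo:main.plateau}; the second is controlled exactly as in Smale's dimension-$8$ argument, since Theorem \ref{theo:main.plateau} delivers $\Gamma'$ with arbitrarily small $C^\infty$-norm independently of the size or Hausdorff dimension of $\sing T$.
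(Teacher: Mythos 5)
There is a genuine gap, and it sits exactly where you flag the ``main technical obstacle'': the globalization step cannot be carried out, and the failure is structural rather than technical. The two pieces you glue along $\Gamma'$ --- the Plateau solution $M'_U$ in $\bar U$ and the outer sheet $\phi(\spt T\setminus U)$ --- are solutions of two separate minimization problems, so their jets along $\Gamma'$ are \emph{outputs} of those problems, not free parameters. A collar diffeomorphism $\phi$ can be chosen to rotate the tangent planes of the outer sheet into agreement with those of $M'_U$, but then the outer sheet is minimal for $\phi_*g$ while $M'_U$ is minimal for $g$, so $\spt\tilde T$ satisfies no single elliptic equation across $\Gamma'$ and higher-order matching cannot be bootstrapped; matching all derivatives by hand would require prescribing the full boundary jet of the Plateau solution, which you cannot do. Generically $\tilde T$ therefore has a crease along $\Gamma'$ and is not stationary for \emph{any} metric there. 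This kills the final conformal step: cutting the corner at scale $r$ saves area of order $r\cdot\cH^{n-1}(\Gamma')$ times the angle deficit, whereas a $C^\infty$-small conformal factor $e^{2f}$ with $f<0$ near $\spt\tilde T$ rewards $\tilde T$ by only $O(\Vert f\Vert_{C^0})$ times its area, so for small $r$ the corner-cutting competitor wins and the $g'$-minimizer is \emph{not} $\tilde T$ --- and you have no regularity information about whatever it actually is. More generally, the assertion that an almost-minimizer can be promoted to a strict minimizer by a conformal bump supported near its support is false in the form you use it: a competitor coinciding with $\spt\tilde T$ away from the seam collects essentially the same conformal discount and still saves area at the seam. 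The paper's Lemma \ref{lemm:main.homology.uniqueness.perturbation} assumes the current is \emph{already} minimizing for the base metric for precisely this reason. (Your characterization of the balancing as ``Smale's dimension-$8$ argument'' is also inaccurate: Smale does not excise and reglue; he perturbs the metric so that the new minimizer is pushed to one side of $T$ and then excludes singularities of that new minimizer by applying the Hardt--Simon foliation classification to a blow-up --- which is exactly the step that fails for $n+1\in\{9,10\}$ and forces the paper's new machinery.)

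For contrast, the paper never reduces the homology problem to a Plateau problem and never constructs an explicit competitor. It proves an open-and-dense statement for each class (Theorem \ref{theo:main.homology.baire}); density is obtained by first making the minimizer uniquely minimizing (Lemma \ref{lemm:main.homology.uniqueness.perturbation}), then building a monotone one-parameter family of metric perturbations whose minimizers form a foliation-with-gaps (Proposition \ref{prop:deformation.manifold}), and then invoking the packing estimate (Theorem \ref{theo:high.density.packing}) to find a member of the family whose maximal singular density has dropped by a fixed $\eta(n)>0$; iterating drives the density below the Allard threshold $\Theta_n^*$. Every object in that chain is an honest minimizer for its own metric, so the regularity theory applies to it directly --- which is exactly the control your glued current $\tilde T$ lacks.
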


See Theorem \ref{theo:main.homology.baire} for the result in the language of integral currents that implies the geometric theorem above.

\begin{remark}
For 2 dimensional integral currents or flat chains mod 2 that minimize area in their homology class (in any codimension), generic regularity has been established by White \cite{White:generic2dim}. In recent work, Li--Wang \cite{LiWang:generic8dim} have proven generic regularity for locally stable (as opposed to minimizing) minimal hypersurfaces in $8$-dimensional manifolds (cf.\ \cite{Wang:deformation,CLS,Edelen:degen}). 
\end{remark}

\begin{remark}
Theorem \ref{theo:main.homology} allows one to extend Schoen--Yau's stable minimal hypersurface obstruction to positive scalar curvature to $9$ and $10$ ambient dimensions (positivity of scalar curvature is an open condition). For example, by combining Theorem \ref{theo:main.homology} with the Schoen--Yau inductive descent method \cite{SY:descent}, we may conclude that the connected sum\footnote{More generally, an SYS manifold of dimension $\leq 10$ does not admit positive scalar curvauture. See \cite[\S 2.7]{gromov2019lectures} for the definition and further discussion.} $T^{n+1}\#M^{n+1}$ with $M$ closed does not admit positive scalar curvature for $n+1\leq 10$ (a well-known reduction of Lohkamp shows that this implies the positive mass theorem in these dimensions). See also the work of Schoen--Yau and Lohkamp \cite{SY:sing,Lohkamp:PSC}. 
\end{remark}

Here are the main points of our strategy to prove Theorem \ref{theo:main.plateau}:
\begin{enumerate}
	\item[(i)] \textbf{Generic regularity via foliations}. Let $T$ be any minimizer with prescribed boundary $\Gamma$. We construct a 1-parameter family $\{ T_t \}$ of pairwise disjoint minimizers which form a local foliation on either side of $T_0 = T$.\footnote{For technical reasons (non-uniqueness of minimizers) our foliations may have gaps.} We will show that there exist $t$ arbitrarily close to $0$ for which $T_t$ is smooth.
		
	\item[(ii)] \textbf{Top density as improving quantity}. One point in common with \cite{CCMS:generic1} is that our generic regularity theorem relies on an \textit{iterative improvement} under perturbations. Specifically, we show the existence of a dimensional constant $\eta > 0$ (in $8$, $9$, or $10$ ambient dimensions), such that for any $T_{t_0}$ there exists a nearby $T_t$ (with $t$ arbitrarily close to $t_0$) satisfying\footnote{Recall that $\sup \emptyset = -\infty$.}
\begin{equation} \label{eq:intro.iteration}
	\sup_{\bx \in \sing T_t} \Theta_{T_t}(\bx) \leq \sup_{\bx \in \sing T_{t_0}} \Theta_{T_{t_0}}(\bx) - \eta,
\end{equation}
where $\Theta_T$ denotes the $n$-dimensional area density of $T$; see Section \ref{sec:preliminaries}. On the other hand, we know from Allard's regularity theorem \cite{Allard:first-variation} that there exists another dimensional constant $\Theta_n^* > 1$ with the property that
\begin{equation} \label{eq:intro.allard}
	\Theta_T(\bx) \geq \Theta_n^*
\end{equation}
at any $\bx \in \sing T$. In particular, \eqref{eq:intro.allard} will gurantee that finitely many iterations of \eqref{eq:intro.iteration} will suffice to arrive at a smooth minimizer. 

The proof of \eqref{eq:intro.iteration} is technically very different here than in our previous work \cite{CCMS:generic1} and relies on two new ingredients, (iii) and (iv) below.

	\item[(iii)] \textbf{Nearly top-density points align along spines}. In Proposition \ref{prop:dens-drop-sing} we prove the infinitesimal analog of \eqref{eq:intro.iteration} with a strict inequality and $\eta=0$ whenever $T_{t_0}$ is a nonflat minimizing cone\footnote{These are the blowup models of singular minimizers.} We then use this to prove in Lemma \ref{lemm:cone-split-pre} that in any nearly conical scale of $T_{t_0}$, all nearly top-density singular points of $T_{t_0}$ as well as all those of nearby $T_t$ must be near the approximate spine of $T_{t_0}$. Since we know from  \cite{Simons:minvar} that singular codimension-one minimizing cones have at most $(n-7)$-dimensional spines, it follows that the set of nearly top-density singular points across all $T_t$'s is, coarsely, at most $(n-7)$-dimensional. 
	
	\item[(iv)] \textbf{Regular part separation}. We then exploit Simon's \cite{Simon:asymptotic-decay} partial Harnack theory for positive Jacobi fields on minimizing cones, with recent modifications by Wang \cite{Wang:smoothing}, to deduce quantitative separation estimates in Lemma \ref{lemm:sep-est} for the $T_t$ along (suitable subsets of) their regular part. This translates into a superlinear H\"older continuity estimate for the ``time $t$'' parameter corresponding to top-density singular points $\bx \in \sing T_t$ (see Claim \ref{clai:packing.superlinear}, and substitute $\lambda_0 > 2 + \varepsilon$).
	
	Combined, (iii) and (iv) imply a packing estimate on the set 
\[ \{ t : T_t \text{ contains a nearly top-density singular point} \} \subset \RR, \]
\textit{roughly} proving that it can be at most $\tfrac{n-7}{2+\varepsilon}$-dimensional with $\varepsilon \in (0, 1)$ a dimensional constant that comes from the analysis of positive Jacobi fields on nonflat minimizing cones. In our setting, we have a 1-parameter family of perturbations $T_t$ to work with, so we can guarantee \eqref{eq:intro.iteration} for $t$ arbitrarily close to $t_0$ as long as our ambient dimension $n+1$ is such that
\[ \frac{n-7}{2+\varepsilon} < 1 \iff n < 9+\varepsilon, \]
i.e., $n+1 = 8, 9, 10$.
\end{enumerate}

For the homology problem adaptation required to prove Theorem \ref{theo:main.homology}, we need to perturb the ambient metric (rather than a prescribed boundary) in a way that guarantees we get a 1-parameter family $\{ T_t \}$ of pairwise disjoint minimizers which form a local foliation on either side of $T_0 = T$, our initial minimizer. Unlike in the sample strategy described above, for the Plateau problem, here we could not produce 1-parameter families of a \emph{continuous} parameter $t$, but rather arbitrarily fine discrete families, where the ``fineness'' must depend on the central leaf $T_0$. This means two things for the proof. First, we need to recalibrate our family $\{ T_t \}$ at each iteration in Step (ii), rather than keep using the same family $\{ T_t \}$ throughout as one could in $\RR^{n+1}$. Second, in order to avoid presenting two parallel proofs, we also adopted this ``discrete'' approach for the original Plateau problem in $\RR^{n+1}$, too.

\begin{remark}[Foliations]
	The use of foliations to derive generic regularity-type theorems for variational problems goes back at least thirty years. Foliations were used by Hardt--Simon \cite{Hardt-Simon:isolated-singularities} to show the generic regularity of minimizers in $\RR^8$. The groundbreaking work of Evans--Spruck on level set flows (\cite{Evans-Spruck:level-set-flow-i, Evans-Spruck:level-set-flow-ii, Evans-Spruck:level-set-flow-iii, Evans-Spruck:level-set-flow-iv}) that generic mean curvature flows are ``nonfattening'' also used foliations; see also \cite{Ilmanen:elliptic-regularization}. It would be interesting to better understand bigger picture implications of the ability to use monotone foliations across geometric PDE problems.  
\end{remark}

\begin{remark}[Obstacle problem]
 It is worth highlighting an interesting connection to the recent generic regularity results for free boundaries in the obstacle problem \cite{FROS:generic} (cf.\ \cite{Monneau,CSV:obst,FS:obst-fine}) and shortly after the Signorini problem \cite{FernandezRealRosOton, FernandezRealTorresLatorre}. That work, too, relies on a subtle derivation of superlinear H\"older-continuity estimate on a ``time $t$'' function for a foliation to prove the smallness of a spacetime singular set across all time parameters $t$. While the obstacle problem is technically very different from the area minimization problem and thus our techniques are altogether quite different, we find this connection very interesting and worth exploring. 
\end{remark}

\subsection{Organization} In Section \ref{sec:preliminaries} we recall some preliminaries and necessary notation from geometric measure theory regarding minimizing integral currents. In Section \ref{sec:minimizing.foliations} we prove our main technical tools regarding the behavior of monotone families (foliations possibly with gaps) consisting of minimizers. We then prove Theorem \ref{theo:main.plateau} in Section \ref{sec:main.plateau} and Theorem \ref{theo:main.homology} in Section \ref{sec:main.homology}. Finally, in Appendices \ref{app:hardt.simon.boundary} and \ref{app:positive.jacobi.fields.cones} we collect some background technical facts regarding the Hardt--Simon boundary regularity of minimizing integral currents and results from Simon's and Wang's analysis of positive Jacobi fields on nonflat minimizing cones.

\subsection{Acknowledgments} We are grateful to Kyeongsu Choi, Nick Edelen, Eugenia Malinnikova, Luca Spolaor, and Brian White for helpful conversations related to this paper, and to Zhihan Wang for pointing out an issue with the statement of Theorem \ref{theo:high.density.packing} in an earlier version of the paper. We are also grateful to Alessio Figalli, Xavier Ros-Oton, Joachim Serra, and Leon Simon for their interest and kind encouragement. O.C. was supported by NSF grants (DMS-2016403 and DMS-2304432), a Terman Fellowship, and a Sloan Fellowship. C.M. was supported by an NSF grant (DMS-2147521).

\section{Preliminaries and notation} \label{sec:preliminaries}

\subsection{Integral currents, regular, singular parts, $\II_n$, $\II_n^1$}

Let $(\breve N, g)$ be an $(n+1)$-dimensional Riemannian manifold without boundary. 

We adopt the standard notation from geometric measure theory that $\II_k(\breve N)$ is the space of integral $k$-currents in $\breve N$, with $k \in \{ 0, 1, \ldots, n, n+1 \}$. This is a weak class of objects pioneered by Federer and Fleming (\cite{FedererFleming}) that is well-suited to homological area-minimization and the Plateau problem in high dimensions. We refer the reader to the standard references \cite{Federer:GMT,Simon:GMT}. We will be interested in elements of $\II_n(\breve N)$ that are \textit{homologically mass-minimizing} in the sense of \cite[5.1.6]{Federer:GMT}, and we'll just say \textit{minimizing} for brevity. 

Recall that for every $T \in \II_k(\breve N)$ with $k \in \{ 1, \ldots, n, n+1 \}$ we have $\partial T \in \II_{k-1}(\breve N)$ and there is a well-defined partition of the support of $T$ away from that of $\partial T$, i.e.,
\[ \spt T \setminus \spt \partial T = \reg T \cup \sing T, \]
where
\begin{align*}
	\reg T = \{ \bx \in \spt T \setminus \spt \partial T : \; 
		& \spt T \cap B_r(\bx) \text{ is a smooth $k$-dimensional submanifold} \\
		& \text{ for sufficiently small } r > 0 \}
\end{align*}
denotes the set of ``interior regular points'' and
\[ \sing T = \spt T \setminus (\spt \partial T \cup \reg T) \]
denotes the set of all remaining ``interior singular points.'' When $T \in \II_n(\breve N)$ is minimizing, it is a celebrated result of Simons \cite{Simons:minvar} and Federer \cite{Federer:dimension-reduction} that $\sing T$ has Hausdorff dimension $\leq n-7$.

The regularity at the boundary is more delicate and, for this and other reasons, it will be convenient in our proof to introduce the following notation:

\begin{definition}
	Let $\II_k^1(N) \subset \II_k(N)$ denote the space of \emph{multiplicity-one} integral $k$-currents; that is, those with density function equal to $1$ $\cH^k$-a.e. on their support.
\end{definition}

It is a fundamental result of Hardt--Simon \cite{Hardt-Simon:boundary-regularity} that when $T \in \II_n^1(\breve N)$ is minimizing and $\partial T = \llbracket \Gamma \rrbracket \in \II_{n-1}^1(\breve N)$ for a smooth, closed, oriented, $(n-1)$-dimensional submanifold $\Gamma$, then for every $\bx \in \spt \partial T$ and sufficiently small $r > 0$, $\spt T \cap B_r(\bx)$ is an $n$-dimensional submanifold with boundary equal to $\Gamma \cap B_r(\bx)$. When $T \in \II_n(\breve N) \setminus \II_n^1(\breve N)$ the situation is still addressed by \cite{Hardt-Simon:boundary-regularity} but becomes more complicated; consider, e.g., the sum of two concentric disks in $\RR^2$ with the same orientation but different radii. We recall the relevant results we'll use for $(\breve N, g) \cong \RR^{n+1}$ in Appendix \ref{app:hardt.simon.boundary}. Finally, see \cite{White:boundary-regularity-multiplicity} for the even more general case where $\partial T \in \II_{n-1}(\breve N) \setminus \II_{n-1}^1(\breve N)$ (but still smooth) is allowed.

\subsection{Interior regularity scale}  \label{subsec:regularity.scale}

In order to carry out sharper analysis on minimizers, we will sometimes need to work on parts of the regular set with some control on the regularity scale. Let us introduce the relevant terminology.

\begin{definition}
	Let $T \in \II_n(\breve N)$. For $\bx \in \spt T$ we define the \emph{interior regularity scale} $r_T(\bx)$ to be\footnote{For notational convenience, we suppress the dependence on $\breve N$.} $r_T(\bx) = 0$ for every $\bx \in \sing T \cup \spt \partial T$, and otherwise $r_T(\bx) \in (0, \infty]$ is the supremum of $r\in(0,\operatorname{inj}_{\breve N, g}(\bx))$ (where $\operatorname{inj}_{\breve N, g}(\bx)$ is the injectivity radius at $\bx \in (\breve N, g)$) so that $\partial T \mres B_r(\bx) = 0$, and $T \mres B_r(\bx)$ is an integral current supported on a smooth hypersurface having second fundamental form $|A|\leq r^{-1}$.
\end{definition}

It is easy to check that $\bx\mapsto r_T(\bx)$ is continuous (see Lemma \ref{lemm:reg-scale-cts} below) with respect to the standard extended topology on the target space $[0, \infty]$. Therefore:

\begin{definition}
For $T \in \II_n(\breve N)$ minimizing and for $\delta>0$, we denote
\[
\cR_{\geq \delta}(T) := \{\bx \in \spt T : r_T(\bx) \geq \delta\},
\]
Note that $\cR_{\geq \delta}(T)  \subset \reg T$ is relatively closed.
\end{definition}

More generally we have the following continuity property for the interior regularity scale:

\begin{lemma}\label{lemm:reg-scale-cts}
Let $T, T_1, T_2, \ldots \in \II_n(\breve N)$ be minimizing, $T_j \rightharpoonup T$, and $\spt \partial T_j \to \spt \partial T$ in the local Hausdorff sense. If $\bx \in \spt T$, $\bx_j \in \spt T_j$, and $\bx_j \to \bx$, then $r_{T_j}(\bx_j) \to r_T(\bx)$. 
\end{lemma}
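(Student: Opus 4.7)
The plan is to prove continuity by separately establishing $\liminf_j r_{T_j}(\bx_j) \ge r_T(\bx)$ and $\limsup_j r_{T_j}(\bx_j) \le r_T(\bx)$. Both reduce to standard compactness and regularity arguments for minimizing integral currents with controlled boundaries.

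For the lower bound I would fix $r < r' < r_T(\bx)$ and use that, by definition, $T \mres B_{r'}(\bx)$ is supported on a smooth hypersurface with $|A_T| \le (r')^{-1}$ and no boundary; in particular the density $\Theta_T$ is a locally constant positive integer near $\bx$. The hypothesis $\spt \partial T_j \to \spt \partial T$ combined with $\bx_j \to \bx$ forces $\spt \partial T_j \cap B_{r'}(\bx_j) = \emptyset$ for all large $j$. Using the minimizing property together with weak convergence $T_j \rightharpoonup T$, the currents $T_j$ converge to $T$ as varifolds on compact subsets of $B_{r'}(\bx) \setminus \spt \partial T$ (no mass is lost by weak lower semicontinuity, and none can concentrate by a standard Federer--Fleming competitor comparison). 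Allard's regularity theorem, applied at the constant multiplicity of $T$ near $\bx$, then upgrades this to smooth convergence $T_j \to T$ on $\overline{B_r(\bx_j)}$, so $|A_{T_j}|$ converges uniformly to $|A_T| \le (r')^{-1} < r^{-1}$. Hence $r_{T_j}(\bx_j) \ge r$ for $j$ large, and letting $r \uparrow r_T(\bx)$ concludes.

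For the upper bound I may assume $r_T(\bx) < 1$ and argue by contradiction. If some subsequence satisfies $r_{T_j}(\bx_j) \ge s$ with $r_T(\bx) < s \le 1$, then for every $r < s$ the definition forces $T_j \mres B_r(\bx_j)$ to be supported on a smooth, boundaryless hypersurface with $|A_{T_j}| \le r^{-1}$, and the monotonicity formula supplies uniform local area bounds. Arzel\`a-Ascoli then extracts a smooth limiting hypersurface in $B_r(\bx)$, and this limit must agree with $\spt T$ by the varifold convergence already established. Therefore $T \mres B_r(\bx)$ is supported on a smooth, boundaryless hypersurface with $|A_T| \le r^{-1}$, forcing $r_T(\bx) \ge r$; letting $r \uparrow s$ contradicts $r_T(\bx) < s$.

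The main technical point to be careful with is upgrading current convergence to varifold convergence away from the boundary and identifying the correct multiplicity with which to invoke Allard. Both are well known for minimizing currents with converging boundaries but deserve explicit mention; once in hand, the rest is a textbook pairing of Allard's regularity with curvature-bounded smooth compactness.
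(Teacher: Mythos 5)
Your overall structure---splitting into $\liminf_j r_{T_j}(\bx_j) \geq r_T(\bx)$ and $\limsup_j r_{T_j}(\bx_j) \leq r_T(\bx)$, getting smooth convergence away from $\spt\partial T$ for the first half, and passing the curvature bound and the absence of boundary to the limit for the second half---is exactly the paper's, and the upper-bound half is fine as written. The one step that does not work as stated is the invocation of ``Allard's regularity theorem, applied at the constant multiplicity of $T$ near $\bx$.'' Allard's theorem is an $\eps$-regularity statement for density ratios close to $1$; there is no version of it at integer density $m\geq 2$, and that case genuinely occurs here: the lemma is stated for $T\in\II_n(\breve N)$, not $\II_n^1(\breve N)$, and the definition of the regularity scale permits $T\mres B_r(\bx)$ to be a smooth hypersurface carried with multiplicity $m\geq 2$, to which the $T_j$ may converge as $m$ separate collapsing sheets. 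So ``varifold convergence plus Allard'' does not by itself upgrade to the smooth convergence you need. The repair is what the paper does: since the $T_j$ are minimizing and boundary-free in $B_r(\bx)$ for large $j$, decompose $T_j\mres B_r(\bx)$ locally into nested minimizing boundaries (\cite[27.8]{Simon:GMT}) and apply De Giorgi's regularity theorem and the compactness theory for minimizing boundaries to each sheet; this yields smooth, possibly multi-sheeted, convergence to $T$ on compact subsets of $B_r(\bx)$, after which your uniform convergence of $|A|$ and the $r<r'<r_T(\bx)$ bookkeeping go through unchanged.
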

\begin{proof}
First we prove $r_T(\bx)  \leq \liminf_{j\to\infty} r_{T_j}(\bx_j)$. Without loss of generality, assume that $r_T(\bx) > 0$; otherwise there is nothing to prove. Fix $r \in (0, r_T(\bx))$.  We have that $\partial T \mres B_r(\bx) = 0$ and $T\mres B_r(\bx)$ is a union of smooth minimal hypersurfaces satisfying $|A|\leq r^{-1}$. Note that $\partial T_j \mres B_r(\bx) = 0$ for $j$ sufficiently large. By locally decomposing $T_j \mres B_r(\bx)$ into boundaries (\cite[Corollary 27.8]{Simon:GMT}) and invoking De Giorgi's regularity for minimizing boundaries (\cite{DeGiorgi:minimizing-regularity}) on compact subsets of $B_r(\bx)$, we find that $T_j \mres B_r(\bx)$ consists of a union of smooth minimal hypersurfaces converging smoothly (with multiplicity) to $T$. This shows that $r \leq \liminf_{j\to\infty} r_{T_j}(\bx_j)$. Since $r < r_T(\bx)$ was arbitrary, this yields the desired result.

Now we prove the opposite inequality, $r_T(\bx) \geq \limsup_{j\to\infty} r_{T_j}(\bx_j)$. Without loss of generality, assume that $\limsup_{j\to\infty}r_{T_j}(\bx_j)>0$; otherwise there is nothing to prove. Fix $r \in (0, \limsup_{j\to\infty}r_{T_j}(\bx_j))$ and pass to a subsequence so that $r_{T_j}(\bx_j) > r$ for all $j$. By passing this information to the limit we get $r_T(\bx) \geq r$. Since $r < \limsup_{j\to\infty}r_{T_j}(\bx_j)$ was arbitrary, this completes the proof. 
\end{proof}

\subsection{Soft separation of minimizers}

\begin{lemma} \label{lemm:reg.connected}
	Suppose that $T \in \II_n(\breve N)$ is minimizing with $\partial T = 0$, and that $\spt T$ is connected. Then, $\reg T$ is also connected.
\end{lemma}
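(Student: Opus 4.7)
The plan is to argue by contradiction. If $n \leq 6$, Simons' theorem gives $\sing T = \emptyset$, so $\reg T = \spt T$ is connected by assumption; I may therefore assume $n \geq 7$ and suppose $\reg T = V \sqcup W$ admits a nontrivial separation into two disjoint, nonempty, relatively open subsets. Since $\sing T$ has Hausdorff dimension at most $n-7$, we have $\cH^n(\sing T) = 0$, so $\reg T$ is dense in $\spt T$ and $\spt T = \overline V \cup \overline W$. Connectedness of $\spt T$ then forces $\overline V \cap \overline W \neq \emptyset$, and because $V, W$ are disjoint relatively open subsets of the open set $\reg T$, this intersection lies entirely in $\sing T$. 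I would fix some $p \in \overline V \cap \overline W \cap \sing T$ as the candidate contact point for the contradiction.

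Next I would decompose $T$ via the constancy theorem: on each connected component of $\reg T$, the minimizer $T$ has constant integer multiplicity, so regrouping the components lying in $V$ and in $W$ produces integral $n$-currents $T_V, T_W$ with $T = T_V + T_W$, $\spt T_V \subseteq \overline V$, and $\spt T_W \subseteq \overline W$. The boundaries $\partial T_V$ and $\partial T_W$ are integral $(n-1)$-currents supported in $\sing T$; since $\cH^{n-1}(\sing T) = 0$ (using $n - 7 < n - 1$ for $n \geq 7$), both vanish. A standard sub-current minimality argument---if $T_V'$ is a local competitor for $T_V$ of strictly smaller mass, then $T_V' + T_W$ is a competitor for $T$ of strictly smaller mass (using that $\spt T_V$ and $\spt T_W$ overlap only on the $\cH^n$-null set $\sing T$, so no cancellation occurs)---shows that $T_V$ and $T_W$ are each individually minimizing.

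Finally I would use that $T_V$ and $T_W$ are two nonzero codim-one minimizing integral currents with zero boundary, with disjoint regular parts $V, W$, yet with $p \in \spt T_V \cap \spt T_W$. I plan to derive the contradiction via a strong maximum principle for codim-one stationary integral varifolds (in the spirit of Solomon--White or Ilmanen): two such minimizers whose supports meet at an interior point must agree in a neighborhood of that point, which is incompatible with the disjointness $V \cap W = \emptyset$. The main obstacle I anticipate is that $p$ is singular for both $T_V$ and $T_W$, so the maximum-principle step needs to tolerate singular contact. I would handle this either by invoking a max-principle version allowing singular contact, or by passing to tangent cones $C_V, C_W$ of $T_V, T_W$ at $p$---which are themselves nonzero minimizing cones whose sum $C_V + C_W$ is a tangent cone of $T$ at $p$, hence also minimizing---and running an induction on ambient dimension $n$, with Simons' theorem providing the trivial base case $n \leq 6$.
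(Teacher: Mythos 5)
Your overall architecture is sound and is in fact the standard argument behind this lemma; the paper itself does not reprove it but simply cites Ilmanen's Theorem A(ii) (noting the argument goes back to Simon's strict maximum principle). Your reduction steps are correct: the separation $\reg T = V \sqcup W$ forces a contact point $p \in \overline V \cap \overline W \subset \sing T$ by connectedness of $\spt T$ and density of $\reg T$; the decomposition $T = T_V + T_W$ with $\spt \partial T_{V}, \spt \partial T_W \subset \sing T$ and $\cH^{n-1}(\sing T)=0$ gives $\partial T_V = \partial T_W = 0$ by the Federer flat support theorem; and mass additivity (the overlap is $\cH^n$-null) shows each summand is minimizing. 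So you have correctly reduced the lemma to a strong maximum principle at a point that is singular for both pieces.

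The issue is that this last step is not a routine application of an off-the-shelf tool, and neither of your two proposed closings works as stated. The Solomon--White maximum principle for stationary varifolds requires the contact point to be a regular point of (at least) one of the two hypersurfaces; it does not tolerate contact at a point singular for both, and no such general statement is known for merely stationary varifolds. The tangent-cone-plus-induction alternative also does not close as sketched: the tangent cones $C_V, C_W$ at $p$ are still $n$-dimensional cones in $\RR^{n+1}$, so induction on the ambient dimension does not reduce the problem; what is actually needed is a Federer-style dimension reduction along the spine together with a delicate analysis of the one-sided configuration, and this is precisely the content of Simon's strict maximum principle paper \cite{Simon:smp} (packaged for this purpose as Theorem A(i) of \cite{Ilmanen:smp}). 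If you are permitted to cite that result, your proof is complete (modulo the minor point that Simon's version is phrased for nested minimizing boundaries, so one should pass to the local decomposition of $T_V, T_W$ into nested boundaries near $p$ and verify the one-sidedness from the disjointness of $V$ and $W$); if not, the crux of the lemma remains unproven.
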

\begin{proof}
	This follows from \cite[Theorem A (ii)]{Ilmanen:smp}, though the argument essentially goes back to \cite{Simon:smp}.
\end{proof}

Our presentation will be further simplified with the following definition:

\begin{definition}
	Suppose that $T, T' \in \II_n(\breve N)$. We say that $T$, $T'$  \textit{cross smoothly} at $p \in \reg T \cap \reg T'$ if, for all sufficiently small $r > 0$, there are points of $\reg T'$ on both sides of $\reg T$ within $B_r(p)$ and vice versa.
\end{definition}

The power of this definition comes from the strong maximum principle for minimizing currents and their unique continuation. Indeed we have the following lemma:

\begin{lemma} \label{lemm:disjoint.or.cross}
	Let $T, T' \in \II^1_n(\breve N) \setminus \{ 0 \}$ be minimizing with $\partial T = \partial T' = 0$ and with $\spt T$, $\spt T'$ connected. Then, exactly one of the following holds:
	\begin{enumerate}
		\item[(i)] $T = \pm T'$.
		\item[(ii)] $\spt T$ and $\spt T'$ are disjoint.
		\item[(iii)] $T$ and $T'$ cross smoothly at some point.
	\end{enumerate}
\end{lemma}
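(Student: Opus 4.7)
The three cases are mutually exclusive for quick reasons: (i) forces $\spt T = \spt T' \neq \emptyset$ with no ambient ``two sides'' of $\reg T$ on which $\reg T'$ could lie, ruling out (ii) and (iii); and (iii) demands a common regular point, ruling out (ii). The substance is to show that whenever $\spt T \cap \spt T' \neq \emptyset$ and no smooth crossing exists at any point of $\reg T \cap \reg T'$, one has $T = \pm T'$. I propose a clopen-connectedness argument in $\reg T$: set
\[
U := \bigl\{\, p \in \reg T : \text{some neighborhood of } p \text{ in } \reg T \text{ is contained in } \reg T' \,\bigr\},
\]
which is open by construction. Once $U$ is shown to be nonempty and relatively closed in $\reg T$, the connectedness of $\reg T$ from Lemma~\ref{lemm:reg.connected} forces $U = \reg T$; by symmetry $\reg T = \reg T'$, hence $\spt T = \spt T'$. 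Partitioning $U = U_+ \sqcup U_-$ according to whether the ambient orientations of $\reg T$ and $\reg T'$ locally agree, another connectedness argument selects a single sign, giving $T = \pm T'$.

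For the nonemptiness, pick $p_0 \in \spt T \cap \spt T'$ and argue that $\spt T'$ lies locally on one side of $\spt T$ near $p_0$: otherwise, using density of $\reg T$, $\reg T'$ in their supports (a consequence of the Simons--Federer bound $\dim \sing \leq n-7$), one locates a regular-regular intersection $q \in \reg T \cap \reg T'$ at which $\reg T'$ has points on both sides of $\reg T$, i.e., a smooth crossing, contradicting the failure of (iii). Ilmanen's strong maximum principle for supports of codimension-one minimizing currents \cite{Ilmanen:smp} then yields $\spt T = \spt T'$ in a neighborhood of $p_0$, and any regular point of $T$ in that neighborhood belongs to $U$. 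For the closedness, given $p_k \in U$ with $p_k \to p_\infty \in \reg T$, one has $p_\infty \in \spt T'$ since $p_k \in \spt T'$ and $\spt T'$ is closed: if $p_\infty \in \reg T'$, Hopf's strong maximum principle for the minimal surface equation combined with the failure of (iii) gives $p_\infty \in U$; if $p_\infty \in \sing T'$, the smooth hypersurface $\reg T$ near $p_\infty$ is already locally contained in $\spt T'$ (through the $p_k$), so Ilmanen's SMP forces local support coincidence, contradicting $p_\infty \in \sing T'$.

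The main obstacle is the use of the strong maximum principle at touching points that may be singular for $T$ or $T'$, where classical Hopf-type arguments do not directly apply. This is precisely the scope of Ilmanen's strong maximum principle \cite{Ilmanen:smp}, which I would invoke as a black box throughout; the rest of the argument is a topological bookkeeping, with the only subtle density/topology step being the exclusion of ``non one-sided'' touching via the failure of (iii) together with the density of regular sets inside supports.
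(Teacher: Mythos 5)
Your overall architecture (clopen argument in $\reg T$, classical maximum principle plus unique continuation, orientation bookkeeping at the end) is reasonable, but there is a genuine gap at the very first substantive step: the nonemptiness of $U$. You assert that if $\spt T'$ fails to lie locally on one side of $\spt T$ near $p_0 \in \spt T \cap \spt T'$, then density of the regular sets lets you ``locate a regular-regular intersection $q \in \reg T \cap \reg T'$'' which is moreover a smooth crossing. This does not follow. The dangerous scenario is that $\spt T \cap \spt T'$ is entirely contained in $\sing T \cup \sing T'$ (for instance, $\spt T'$ passes from one side of $\spt T$ to the other through a singular point of $T$): then there is no regular-regular intersection point at all, no smooth crossing in the sense of the definition, and no one-sidedness either, so neither branch of your dichotomy applies. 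Density of $\reg T$ and $\reg T'$ in their supports says nothing about their intersection. Ruling out touching that is confined to the singular sets is exactly the content of Simon's strict maximum principle \cite[Theorem 1]{Simon:smp}, which is the (nontrivial) result the paper invokes at this point: once $\spt T \cap \spt T' \neq \emptyset$ one gets a point of $\reg T \cap \reg T'$, and there the negation of (iii) plus the classical Hopf maximum principle and unique continuation give $T = \pm T'$ in a couple of lines. Ilmanen's theorem \cite{Ilmanen:smp}, which you use as your black box, is a \emph{one-sided} maximum principle; it cannot be applied until one-sidedness (or a regular touching point) has been produced, which is precisely the step in question. The same issue resurfaces in your closedness argument at a limit point $p_\infty \in \sing T'$, where you again invoke Ilmanen's result without a one-sidedness hypothesis.

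If you replace your nonemptiness step by a citation of \cite[Theorem 1]{Simon:smp}, the remaining clopen/unique-continuation scaffolding becomes unnecessary: the paper's proof goes directly from a regular-regular touching point to local coincidence (classical strong maximum principle, since failure of (iii) gives one-sidedness in a small ball around that regular point) and then to global coincidence by unique continuation and connectedness. So the fix is not a patch to your topology but the importation of the one hard analytic input you are missing.
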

\begin{proof}
	Clearly the three alternatives are mutually exclusive for nontrivial currents, so it suffices to show that at least one of them always holds. 
	
	Suppose (ii) above fails. Then $\reg T \cap \reg T' \neq \emptyset$ by  \cite[Theorem 1]{Simon:smp}. If (iii) failed too, there would exist some $r > 0$ and $p \in \reg T\cap \reg T'$ so that $(\sing T \cup \sing T')\cap B_r(p) = \emptyset$, $\reg T \cap B_r(p)$ and $\reg T' \cap B_r(p)$ are connected and thus $T$, $T'$ are boundaries in $B_r(p)$ and, finally, if $B_r(p) \setminus \reg T = E_+ \cup E_-$, then $\reg T' \cap B_r(p) \subset \bar E_+$, up to perhaps swapping $E_\pm$. Then, $T = \pm T'$ by the strong maximum principle and unique continuation, so (i) holds.
\end{proof}

In many cases of interest, we can guarantee conclusion (ii) above by keeping track of, e.g., boundaries in a slightly bigger domain.\footnote{Some regularity assumptions of Lemma \ref{lemm:disjoint} can be relaxed, but this is not necessary for us.}

\begin{lemma} \label{lemm:disjoint}
	Fix a smooth, compact, oriented subdomain $\Omega \subset \breve N$ and smooth, disjoint (possibly empty) hypersurfaces $\Gamma, \Gamma' \subset \partial \Omega$ that homologically bound a compact $\Sigma \subset \partial \Omega$ in the sense that $\partial \llbracket \Sigma \rrbracket = \llbracket \Gamma \rrbracket - \llbracket \Gamma' \rrbracket$. Let $T, T' \in \II^1_n(\breve N) \setminus \{ 0 \}$ be minimizing with $\partial T = \llbracket \Gamma \rrbracket$, $\partial T' = \llbracket \Gamma' \rrbracket$, with $\spt T\setminus \spt \partial T $ and $\spt T' \setminus \spt \partial T' $ connected, $\spt T \setminus \partial T \subset \Omega \setminus \partial \Omega$, $\spt T' \setminus \partial T' \subset \Omega \setminus \partial \Omega$, and with $T$ and $T' + \llbracket \Sigma \rrbracket$ homologous in $\Omega$. Then, either
	\begin{enumerate}
		\item[(i)] $T = T'$ (in which case $\Gamma = \Gamma' = \emptyset$), or
		\item[(ii)] $\spt T$ and $\spt T'$ are disjoint.
	\end{enumerate}
\end{lemma}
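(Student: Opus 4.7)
The plan is to extend the trichotomy of Lemma \ref{lemm:disjoint.or.cross} to the boundary-present setting, using the homology hypothesis (and minimality) to eliminate all alternatives other than $T = T'$ with $\Gamma = \Gamma' = \emptyset$. I argue by contradiction: suppose (ii) fails, so $\spt T \cap \spt T' \neq \emptyset$, and deduce (i).

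First I reduce to an interior intersection in $\Omega \setminus \partial \Omega$. The support hypothesis gives $\spt T \cap \partial \Omega,\, \spt T' \cap \partial \Omega \subset \Gamma \cup \Gamma'$; if a hypothetical intersection point $p$ lay on $\Gamma$, it would lie in $\spt T'$ but not in $\Gamma' = \spt \partial T'$, so $p$ would be an interior (boundary-free) regular or singular point of $T'$ touching $\partial \Omega$ from inside $\Omega$, and the boundary strong maximum principle applied to $T'$ against the smooth obstacle $\partial \Omega$ would force $\spt T'$ locally into $\partial \Omega$, contradicting $(\spt T') \setminus \partial \Sigma \subset \Omega \setminus \partial \Omega$; the case $p \in \Gamma'$ is symmetric. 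Fix then $p \in \spt T \cap \spt T' \cap (\Omega \setminus \partial \Omega)$. Near $p$, both $T$ and $T'$ are boundary-free multiplicity-one minimizing integral currents, so the local analysis in the proof of Lemma \ref{lemm:disjoint.or.cross} yields three alternatives at $p$: (a) $T = T'$ in a neighborhood of $p$; (b) $T = -T'$ in a neighborhood of $p$; or (c) $T$ and $T'$ cross smoothly at $p$.

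In case (a), unique continuation for the minimal surface equation, combined with the connectedness of $\reg T$ (which follows as in Lemma \ref{lemm:reg.connected} since $\sing T$ has Hausdorff dimension at most $n-7$ in the connected set $\spt T \setminus \Gamma$), propagates the local equality to a global identity $T = T'$ of currents. Then $\llbracket \Gamma \rrbracket = \partial T = \partial T' = \llbracket \Gamma' \rrbracket$, and disjointness of $\Gamma,\Gamma'$ forces $\Gamma = \Gamma' = \emptyset$, giving conclusion (i). In case (b), the same propagation yields $T = -T'$ globally and hence $\Gamma = \Gamma' = \emptyset$. The homology hypothesis then provides an integral $(n+1)$-current $R$ in $\Omega$ with $\partial R = T - T' - \llbracket \Sigma \rrbracket = 2T - \llbracket \Sigma \rrbracket$, and writing $R = r \llbracket \Omega \rrbracket$ for an integer-valued $r \in BV(\Omega)$ exhibits $T$ as a finite sum of jumps of the superlevel-set currents $\partial \llbracket \{r \geq k\} \rrbracket$; combined with the $\Sigma$-boundary contribution this allows one to build an integral $(n+1)$-current in $\breve N$ whose boundary is $T$, showing $T$ is homologous to $0$ in $\breve N$. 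Global mass-minimality of $T$ then forces $M(T) \leq M(0) = 0$, contradicting $T \neq 0$.

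Case (c) is ruled out by a cut-and-paste argument. Using the $BV$-representation of $R$ and the local Caccioppoli-set structure of $T, T'$ at the smooth crossing $p$, one rearranges $T$ and $T'$ across the crossing to produce competitor currents $\tilde T, \tilde T' \in \II_n^1(\breve N)$ with $\partial \tilde T = \llbracket \Gamma \rrbracket$, $\partial \tilde T' = \llbracket \Gamma' \rrbracket$, lying in the same relative homology classes as $T, T'$ in $(\Omega, \partial \Omega)$, and satisfying the mass identity $M(\tilde T) + M(\tilde T') = M(T) + M(T')$. By the minimality of $T$ and $T'$, both $\tilde T$ and $\tilde T'$ must then also be minimizing, yet each acquires a codimension-one corner at the original crossing locus (where pieces of $\spt T$ are glued to pieces of $\spt T'$ along the transverse/tangential crossing), contradicting the Federer--Simons interior regularity bound $\dim_{\cH} \sing \tilde T \leq n-7$. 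The main obstacle in the plan is making this last step precise in the presence of the boundary contribution $\llbracket \Sigma \rrbracket$ on $\partial \Omega$: one must carefully track how $\llbracket \Sigma \rrbracket$ is redistributed between $\tilde T$ and $\tilde T'$, decompose $R$ into sublevel pieces that respect both boundary components $\Gamma, \Gamma'$, and verify the mass identity globally — this is standard geometric-measure-theoretic bookkeeping in the spirit of Solomon--White's uniqueness arguments, but it is the technically fiddly heart of the proof.
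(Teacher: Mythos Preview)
Your overall strategy—use the homology hypothesis and a cut-and-paste to exclude smooth crossing, then invoke the trichotomy—matches the paper's. But two steps are genuinely incomplete.

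First, the reduction to an interior intersection invokes a ``boundary strong maximum principle'' for $T'$ against $\partial\Omega$; no such principle holds, since $\partial\Omega$ is not assumed minimal or mean-convex. The paper avoids this entirely by working globally from the outset rather than first localizing to an interior touching point.

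Second, your case (c) is where the real content lies, and the mechanism you describe is not quite right. You invoke the $BV$ decomposition of $R$ and then claim the rearranged $\tilde T,\tilde T'$ acquire ``codimension-one corners'' at the crossing locus. But a \emph{local} swap at $p$ gives corners without producing global competitors in the correct homology classes, while the \emph{global} parity decomposition (the odd-multiplicity set $E$ of $R$, following \cite{White:boundary-regularity-multiplicity} and \cite{Morgan:regularity-modulo-n}) yields $\tilde T$ and $\hat T=\tilde T+\partial\llbracket E\rrbracket$ that are minimizing—hence regular, with no corners. What this construction \emph{does} give is that $\tilde T,\hat T$ differ by a Caccioppoli boundary and therefore cannot cross smoothly; the paper then applies Lemma~\ref{lemm:disjoint.or.cross} to these \emph{new} currents (not to $T,T'$) and traces the disjointness back. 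Your ``corner $\Rightarrow$ regularity contradiction'' is at best a heuristic for transversal crossings. Separately, your case (b) argument does not establish that $T$ is null-homologous in $\breve N$: from $2T=\llbracket\Sigma\rrbracket-\partial R$ you cannot in general extract $T=\partial S$ (think of $2$-torsion in $H_n$), so that step needs a different justification via the homology assumption on the decomposed currents.
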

\begin{proof}
	Assume that (ii) fails; otherwise, there is nothing to show. 
		
	Let's suppose that $T$, $T'$ do not cross smoothly at any point of $\spt T \cap \spt T'$. It follows from Lemma \ref{lemm:disjoint.or.cross} that $T = \pm T'$. If $T = T'$, we are done. Otherwise, $T = -T'$ and $\partial T = \partial T' =0$, so $T$,  $T'$ are homologous in $\Omega$. It then follows that $2T$ is null-homologous in $\Omega$. Using the long exact sequence in relative cohomology, this contradicts that $H_n(\Omega, \ZZ) \cong H^1(\Omega,\partial\Omega,  \ZZ)$ (Lefschetz duality) $\cong \operatorname{Hom}(H_1(\Omega,\partial\Omega, \ZZ), \ZZ)$ is torsion-free.
	
	Thus, we are left to study the situation in which
	\begin{equation} \label{eq:disjoint.x.smooth}
		T, T' \text{ cross smoothly at some } \bx \in \Omega \setminus \partial \Omega.
	\end{equation}
	We will derive a contradiction by combining the cut-and-paste decomposition of \cite{White:boundary-regularity-multiplicity} with a modification taken after the proof of \cite[Theorem 2.2]{Morgan:regularity-modulo-n}. Specifically, we will follow the construction of \cite{White:boundary-regularity-multiplicity} with $p=2$ in the inductive step, and with $T + T' + \llbracket \Sigma \rrbracket$ playing the role of the mod-2 boundary.
	
	By assumption, we have
	\begin{equation} \label{eq:disjoint.support.lateral}
		\spt T \cap (\Sigma \setminus \partial T) = \spt T' \cap (\Sigma \setminus \partial T') = \emptyset.
	\end{equation}
	We also have
	\begin{equation} \label{eq:disjoint.same.bdries}
		\partial T = \partial (T' + \llbracket \Sigma \rrbracket) = \llbracket \Gamma \rrbracket.
	\end{equation}
	Since $T$ and $T' + \llbracket \Sigma \rrbracket$ are homologous in $\Omega$, 
	\begin{equation} \label{eq:disjoint.delta}
		T' + \llbracket \Sigma \rrbracket - T = \partial \Delta
	\end{equation}
	for some $\Delta \in \II_{n+1}(\breve N)$ with $\spt \Delta \subset \Omega$. Let $U$  denote the Caccioppoli set corresponding to the odd multiplicity portion of $\Delta$. It follows from parity considerations that
	\begin{equation} \label{eq:disjoint.e}
		\Delta - \llbracket U \rrbracket = 2 E
	\end{equation}
	where $E \in \II_{n+1}(\breve N)$ has $\spt E \subset \Omega$. We have
	\begin{equation} \label{eq:disjoint.support.lateral.mult.1}
		\spt E \cap  (\Sigma \setminus \partial \Sigma) = \emptyset
	\end{equation}
	from the constancy theorem, \eqref{eq:disjoint.support.lateral} and the fact that, by construction (see \eqref{eq:disjoint.delta}), $\Delta$ has multiplicity one near $\Sigma$. Let us define:
	\begin{equation} \label{eq:disjoint.tildet}
		\tilde T := T + \partial E,
	\end{equation}
	\begin{equation} \label{eq:disjoint.that}
		\hat T := \tilde T + \partial \llbracket U \rrbracket.
	\end{equation}
	By construction, $\tilde T$, $\hat T$ are homologous to $T$ in $\Omega$. By \eqref{eq:disjoint.tildet}, \eqref{eq:disjoint.that}, and \eqref{eq:disjoint.same.bdries}, 
	\begin{equation} \label{eq:disjoint.support.decomposition}
		\tilde T + \hat T = T + T' + \llbracket \Sigma \rrbracket,
	\end{equation}
	\begin{equation} \label{eq:disjoint.support.decomposition.bdry}
		\partial \tilde T = \partial \hat T = \llbracket \Gamma \rrbracket.
	\end{equation}
  In the ``$\nu = 2$'' inductive step of \cite[Decomposition Theorem]{White:boundary-regularity-multiplicity} White proves:
	\begin{equation} \label{eq:disjoint.support.decomposition.mass}
		\Vert T + T' + \llbracket \Sigma \rrbracket \Vert = \Vert \tilde T \Vert + \Vert \hat T \Vert
	\end{equation}
	We briefly recall how. One proves this at the level of densities at points $\by$ where the underlying rectifiable sets have a planar approximate tangent plane with an integer multiplicity (this holds for $\cH^n$-a.e. $\by$). Since $\hat T$, $\tilde T$ differ by the boundary of a Caccioppoli set by \eqref{eq:disjoint.that}, it follows at all such $\by$ that the integer multiplicity of $\hat T$ equals $0$ or $1$ plus that of $\tilde T$. In particular, their integer multiplicities cannot have opposite signs. Then \eqref{eq:disjoint.support.decomposition} implies 
	\[ \Theta_{\tilde T}(\by) + \Theta_{\hat T}(\by) = \Theta_{T+T'+\llbracket \Sigma \rrbracket}(\by), \]
	at all such $\by$, which in turn implies \eqref{eq:disjoint.support.decomposition.mass}.
	
	It also follows from \eqref{eq:disjoint.support.lateral} and \eqref{eq:disjoint.support.lateral.mult.1} that $\Vert \llbracket \Sigma \rrbracket \Vert \leq \Vert \hat T \Vert$. Set
	\begin{equation} \label{eq:disjoint.tildetprime}
		\tilde T' := \hat T - \llbracket \Sigma \rrbracket.
	\end{equation}
	Then, \eqref{eq:disjoint.support.lateral}, \eqref{eq:disjoint.support.decomposition.mass}, \eqref{eq:disjoint.tildetprime}, and the unit multiplicity along $\Sigma$ yield
	\begin{equation} \label{eq:disjoint.support.decomposition.mass.no.sigma}
		\Vert T + T' \Vert = \Vert \tilde T \Vert + \Vert \tilde T' \Vert,
	\end{equation}
	while \eqref{eq:disjoint.support.decomposition.bdry} and \eqref{eq:disjoint.tildetprime} give
	\begin{equation} \label{eq:disjoint.support.decomposition.bdry.no.sigma}
		\partial \tilde T = \llbracket \Gamma \rrbracket \text{ and } \partial \tilde T' = \llbracket \Gamma' \rrbracket.
	\end{equation}
	It follows from the minimizing nature of $T$ and $T'$ and from \eqref{eq:disjoint.support.decomposition.mass.no.sigma} that
	\[ \MM(T) + \MM(T') \leq \MM(\tilde T) + \MM(\tilde T') = \MM(T+T') \leq \MM(T) + \MM(T'), \]
	so, by \eqref{eq:disjoint.support.decomposition.bdry.no.sigma} and the equality in homology of $T$, $\tilde T$ and of $T'$, $\tilde T'$, it follows that
	\begin{equation} \label{eq:disjoint.tildet.prime.minimizing}
		\tilde T, \tilde T' \text{ are minimizing.}
	\end{equation}
	
	We now derive a contradiction locally near $\bx$, the crossing point from \eqref{eq:disjoint.x.smooth}. 	Choose $r > 0$ small enough that $B_r(\bx) \subseteq \Omega \setminus \partial \Omega$ and
	\begin{equation} \label{eq:disjoint.support.decomposition.T.bdryA}
		T \mres B_r(\bx) = \partial \llbracket A \rrbracket \mres B_r(\bx),
	\end{equation}
	\begin{equation} \label{eq:disjoint.support.decomposition.Tprime.bdryAprime}
		T' \mres B_r(\bx) = \partial \llbracket A' \rrbracket \mres B_r(\bx),
	\end{equation}
	for two open sets $A$, $A' \subseteq B_r(\bx)$. It follows from \eqref{eq:disjoint.delta} and the constancy theorem that there is $k\in \ZZ$ such that
\begin{equation*}
 \Delta \mres B_r(\bx) = k \llbracket B_r(\bx) \rrbracket + \llbracket A'\setminus A \rrbracket - \llbracket A\setminus A' \rrbracket\, ,
\end{equation*}
	which in case $k$ is even implies that 
	$$U\cap B_r(\bx) = A'\setminus A \cup A\setminus A'\, .$$
	From \eqref{eq:disjoint.e} we see that
	$$ 2E\mres B_r(\bx) = k \llbracket B_r(\bx) \rrbracket - 2 \llbracket A\setminus A' \rrbracket\, .$$
	Thus
	$$ \tilde T\mres B_r(\bx) = T\mres B_r(\bx)+\partial E\mres B_r(\bx) = \partial \llbracket A'\cap A \rrbracket$$
	and
	$$ \tilde T'\mres B_r(\bx) = \partial \llbracket A'\cup A \rrbracket\, .$$
	In case $k$ is odd a similar reasoning yields that 
$$ \tilde T\mres B_r(\bx) = \partial \llbracket A'\cup A \rrbracket$$
	and
	$$ \tilde T'\mres B_r(\bx) = \partial \llbracket A'\cap A \rrbracket\, .$$
	By \eqref{eq:disjoint.tildet.prime.minimizing} both $\tilde T, \tilde T'$ are minimising, but $\bx \in \spt \tilde T \cap \spt \tilde T'$. Thus by Lemma  \ref{lemm:disjoint.or.cross} we have $\tilde T\mres B_r(\bx) = \tilde T'\mres B_r(\bx)$ which contradicts that $T$ and $T'$ are smoothly crossing at $\bx$.  This completes the proof.

\end{proof}

\subsection{Minimizing cones}

To engage in the refined study of the local structure of a minimizing $T \in \II_n(\breve N)$ at $\bx \in \sing T$ one often supposes without loss of generality that $T \in \II_n^1(\breve N)$ and that in fact $T$ is a minimizing boundary. This can be arranged by the local decomposition of $T$ into minimizing boundaries (\cite[Corollary 27.8]{Simon:GMT}). Then one takes a sequence of dilations $(\Dil_{\bx, \lambda_i})_\# (\breve N, g)$ of $(\breve N, g)$ centered\footnote{To be precise, by $(\Dil_{\bx, \lambda_i})_\# (\breve N, g)$ we mean the pointed Riemannian manifold $(\breve N,\lambda_i^{-2} g,\bx)$, which will converge to flat Euclidean space $(\RR^{n+1},g_{\RR^{n+1}},\bOh)$ in the pointed Cheeger--Gromov sense. We'll also use the same notation $\Dil_{\bx,\lambda_i}$ for the linear map $\RR^{n+1}\to\RR^{n+1}$, $\by \mapsto \lambda_i^{-1}(\by-\bx)$. The distinction will be clear from context.} at $\bx$ with $\lambda_i \to 0$, and after passing to a subsequence (not labeled) obtains (\cite[Theorem 35.1]{Simon:GMT})
\begin{equation} \label{eq:tangent.cone}
	\cC = \lim_{i \to \infty} (\Dil_{\bx, \lambda_i})_\# T \in \II^1_n(\RR^{n+1})
\end{equation}
which is also a minimizing boundary and invariant under dilations centered at $\bOh \in \RR^{n+1}$, i.e., it is a \emph{cone}. This $\cC$ is called a \emph{tangent cone} of $T$ at $\bx$. We often identify $\cC$ with its support in $\RR^{n+1}$ and simply refer to $\cC \subseteq \RR^{n+1}$ as a tangent cone of $T$ at $\bx$.

The main tool in the proof of \eqref{eq:tangent.cone} is the compactness theorem for minimizing boundaries and the approximate monotonicity formula for the density function
\begin{equation} \label{eq:density.scale.r}
	\Theta_T(\bx, r) = \frac{\MM(T \mres B_r(\bx))}{\omega_n r^n}, \;  0 < r < r_0,
\end{equation}
where $\MM$ denotes the mass of a current, $\omega_n$ is the volume of a unit $n$-ball in $\RR^n$, and $r_0 = \min\{ \operatorname{inj}_{(\breve N, g)}(\bx), \dist_g(\bx, \spt \partial T) \}$. The approximate monotonicity formula implies that
\begin{equation} \label{eq:monotonicity.lambda}
	r \mapsto e^{\Lambda r} \Theta_T(\bx, r) \text{ is non-decreasing},
\end{equation}
where $\Lambda = \Lambda(\breve N, g)$ is a constant converging to $0$ as $(\breve N, g) \to \RR^{n+1}$ (or a fixed open ball in it) in $C^2$. Note that \eqref{eq:monotonicity.lambda} guarantees that
\begin{equation} \label{eq:density.scale.0}
	\Theta_T(\bx) = \lim_{r \to 0} \Theta_T(\bx, r)
\end{equation}
exists. Allard's regularity theorem guarantees that $\Theta_T(\bx) > 1$ (since $\bx \in \sing T$) and, in fact, that $\Theta_T(\bx) \geq 1 + \alpha$ for some $\alpha = \alpha(n) > 0$. For $n\geq 7$ denote
\begin{equation} \label{eq:theta.star}
	\Theta^*_n := \text{minimal density $\Theta_\cC(\bOh)$ of a nonflat minimizing cone $\cC^n \subset \RR^{n+1}$};
\end{equation}
it is not hard to show the minimum is attained. Note that this is only well-defined for $n \geq 7$, since by \cite{Simons:minvar} there are no nonflat minimizing cones when $n \leq 6$.

There is also a rigidity statement associated with \eqref{eq:monotonicity.lambda} when we're in $\RR^{n+1}$ (where $\Lambda = 0$). It implies that if $T \in \II_n^1(\RR^{n+1})$ is a minimizing boundary and $\Theta_T(\bOh, R_2) = \Theta_T(\bOh, R_1)$ for some $R_2 > R_1 > 0$, then $T$ is a minimizing cone $\cC \subseteq \RR^{n+1}$. This follows from the rigidity case of \eqref{eq:monotonicity.lambda}, the connectedness of $\reg T$ (see Lemma \ref{lemm:reg.connected}), and unique continuation.

A key insight in the regularity theory, particularly in the implementation of Federer's dimension reduction scheme to study the regularity of minimizing cones $\cC^n \subseteq \RR^{n+1}$, is to consider
\begin{equation}
\spine \cC : = \{\bx \in \cC : \Theta_\cC(\bx) = \Theta_\cC(\bOh)\}.
\end{equation}
It follows from a standard argument (``cone-splitting'') that $\spine \cC\subset \RR^{n+1}$ is a linear subspace and that $\cC = \cC + \bx$ for $\bx \in \spine \cC$ so $\cC$ splits with respect to the orthogonal decomposition of $\RR^{n+1}$ as $ (\cC \cap (\spine \cC)^\perp) \oplus  \spine \cC$. It also follows from \cite{Simons:minvar} that $\dim \spine \cC \leq n-7$ whenever $\cC$ is a \textit{nonflat} minimizing cone.

\section{Minimizing foliations} \label{sec:minimizing.foliations}

\subsection{Density-drop and cone-splitting}

The key innovation used by Hardt--Simon to prove Theorem \ref{theo:main.plateau} in $\RR^8$ is the following result. Note that the assumption $\sing \cC = \{ \bOh \}$ below is always true for nonflat minimizing cones in $\RR^8$, but not in $\RR^{n+1}$ for $n+1 \geq 9$.

\begin{theorem}[{\cite[Theorem 2.1]{Hardt-Simon:isolated-singularities}}]\label{thm:HS-fol}
Suppose that $\cC^n \subset \RR^{n+1}$ is a minimizing cone with $\sing \cC = \{\bOh\}$. Then, there exist two minimizing $T_\pm \in \II^1_n(\RR^{n+1})$ with $\partial T_\pm = 0$ and with $\spt T_\pm$ being smooth, connected, and so that $\spt T_+$ and $\spt T_-$ lie in distinct components of $\RR^{n+1}\setminus \spt \cC$. These $T_\pm$ have the additional property that for all minimizing $T \in \II^1_n(\RR^{n+1})$ with $\partial T = 0$ and with connected $\spt T$, it holds that
\[ T \text{ does not cross } \cC \text{ smoothly} \]
if and only if 
\[ T = \pm \cC \text{ or } (\Dil_{\bOh,\lambda})_\#T_\pm  \text{ for some $\lambda \neq 0$.} \]
\end{theorem}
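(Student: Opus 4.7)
The plan is to construct $T_+$ (and symmetrically $T_-$) by solving a constrained area-minimization problem in one of the two open connected components $E_\pm$ of $\RR^{n+1} \setminus \spt \cC$, whose very existence uses $\sing\cC = \{\bOh\}$. Using the leading-order positive Jacobi field on $\cC$ from the Simon/Wang analysis (Appendix \ref{app:positive.jacobi.fields.cones}), I would fix a small ``inner'' perturbation $\Gamma_\rho \subset \overline{E_+}$ of $\cC \cap \partial B_\rho$ for small $\rho > 0$, and for each $R \gg \rho$ solve
\[
\inf\bigl\{\MM(T) : T \in \II_n^1(\RR^{n+1}),\ \partial T = \llbracket \cC \rrbracket \mres \partial B_R - \llbracket \Gamma_\rho \rrbracket,\ \spt T \subset \overline{E_+} \cap (\overline{B_R}\setminus B_\rho)\bigr\}.
\]
Standard compactness gives a minimizer $T_{+,\rho,R}$; sending $R\to\infty$ and $\rho\to 0$ (rescaling $\Gamma_\rho$ against the Jacobi-field asymptotics to prevent collapse onto $\cC$) produces $T_+ \in \II^1_n(\RR^{n+1})$ minimizing with $\partial T_+ = 0$ and $\spt T_+ \subset \overline{E_+}$. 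Lemma \ref{lemm:disjoint.or.cross} then forces either $T_+ = \cC$, excluded by the one-sided construction, or $\spt T_+ \cap \spt\cC = \emptyset$; thus $\spt T_+ \subset E_+$.

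For smoothness and connectedness, the upper/lower Jacobi-field barriers confine $T_+$ to a thin two-sided neighborhood of $\cC$ decaying at the prescribed rates at $\bOh$ and at $\infty$. Standard interior regularity for minimizers confined to an $\eps$-tube around $\cC$ then shows $T_+$ is a smooth graph over $\reg\cC$, in particular smooth everywhere with no interior singularities. Connectedness of $\spt T_+$ is then Lemma \ref{lemm:reg.connected}, and nonemptiness comes from mass lower bounds at scale $R$ passing to the limit.

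Next, scale invariance: the family $\{(\Dil_{\bOh,\lambda})_\# T_+ : \lambda > 0\}$ is a one-parameter family of minimizers with $\partial = 0$ and connected support in $E_+$. By Lemma \ref{lemm:disjoint.or.cross} any two members are equal or disjoint; if equal for some $\lambda \neq 1$ then $T_+$ would be itself a dilation-invariant cone, forcing $\bOh \in \spt T_+$ and contradicting $\spt T_+ \subset E_+$. Hence the family foliates $E_+$ (similarly $T_-$ foliates $E_-$). For the final uniqueness assertion: given $T$ minimizing with connected support, $\partial T = 0$, and not crossing $\cC$ smoothly, Lemma \ref{lemm:disjoint.or.cross} leaves $T = \pm\cC$ or $\spt T \subset E_+ \cup E_-$. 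In the latter case, say $\spt T \subset E_+$, define
\[
\lambda_0 := \inf\{\lambda > 0 : (\Dil_{\bOh,\lambda})_\# T_+ \text{ lies on the $\cC$-side of } T \text{ in } E_+\}.
\]
By the foliation property $\lambda_0 \in (0,\infty)$, and at $\lambda_0$ the two minimizers touch tangentially at an interior regular point; the strong maximum principle and unique continuation force $T = (\Dil_{\bOh,\lambda_0})_\# T_+$.

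The main obstacle is the construction step: ensuring that the diagonal limit $R\to\infty$, $\rho\to 0$ simultaneously produces a current that is nontrivial, has $\partial = 0$, and remains strictly separated from $\cC$. This requires carefully tracking the two fundamental positive Jacobi fields on $\cC$ (one decaying at $\bOh$, one at $\infty$) against the chosen scaling of the inner and outer boundary data — precisely the ODE/separation-of-variables analysis on the link of $\cC$ recalled in Appendix \ref{app:positive.jacobi.fields.cones}. Once this calibration is done, everything else (smoothness, foliation, uniqueness) follows from maximum principle and dilation-invariance arguments already internal to the paper.
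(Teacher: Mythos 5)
First, a framing point: the paper does not prove Theorem \ref{thm:HS-fol} at all --- it is quoted from Hardt--Simon \cite{Hardt-Simon:isolated-singularities}, and the authors deliberately avoid relying on its full strength (they use only the density-drop statement, Proposition \ref{prop:dens-drop-smooth}, generalized in Proposition \ref{prop:dens-drop-sing}). So your proposal is a reconstruction of Hardt--Simon's argument rather than of anything internal to this paper. Your outline does capture the correct skeleton: one-sided Plateau problems in $\bar E_\pm$, the maximum principle (Lemma \ref{lemm:disjoint.or.cross}) to force strict one-sidedness, and a dilation/sliding argument for the classification.

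That said, the three steps you treat as routine are precisely where the content of Hardt--Simon's proof lies, and as written they are genuine gaps. (1) \emph{Non-collapse.} You flag this, but your setup (outer boundary data lying on $\cC$ itself, inner boundary pushed into $E_+$, double limit $R \to \infty$, $\rho \to 0$) makes it harder than necessary: as $\rho \to 0$ the only data keeping the minimizer off $\cC$ disappears. Hardt--Simon instead push the \emph{outer} boundary data into $E_+$ and renormalize each approximating minimizer by a dilation so that $\dist(\spt T_j, \bOh) = 1$; nontriviality of the limit is then automatic and the remaining issues are handled by the monotonicity formula, not by a delicate Jacobi-field calibration of two boundary scales. (2) \emph{Smoothness.} ``Standard interior regularity for minimizers confined to an $\eps$-tube around $\cC$'' is not a theorem: Allard/De Giorgi regularity requires excess or tilt smallness at a definite scale, not mere confinement to a neighborhood, and near $\bOh$ the tube degenerates because $\cC$ is itself singular there. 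Ruling out $\sing T_+ \neq \emptyset$ is a substantial argument in \cite{Hardt-Simon:isolated-singularities} (and the hard part of its generalization in \cite{Wang:smoothing}), resting on the classification of one-sided minimizing tangent cones together with the asymptotic analysis of positive Jacobi fields. (3) \emph{Foliation and sliding.} Pairwise disjointness of the dilates $(\Dil_{\bOh,\lambda})_\# T_+$ does not imply they sweep out all of $E_+$; and even granting that, at your critical parameter $\lambda_0$ the two minimizers could a priori be disjoint but mutually asymptotic (``touching at $\bOh$ or at infinity'') rather than touching at an interior regular point --- excluding this is exactly what the growth rates for positive Jacobi fields (Appendix \ref{app:positive.jacobi.fields.cones}) are for. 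None of this is fatal to the strategy --- it \emph{is} Hardt--Simon's strategy --- but your write-up currently outsources all of the actual proof to ``standard'' facts that are not standard.
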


Our key observation in \cite[Appendix D]{CCMS:low-ent-gen} was that a weaker version of Hardt--Simon's Theorem \ref{thm:HS-fol} suffices to prove Theorems \ref{theo:main.plateau} and \ref{theo:main.homology} in $8$ ambient dimensions. Specifically, one just needs to prove the following density-drop result on either side of a minimizing cone with an isolated singularity:
 
\begin{proposition}[cf. {\cite[Proposition D.2]{CCMS:low-ent-gen}}]\label{prop:dens-drop-smooth}
Suppose that $\cC^n \subset \RR^{n+1}$ is a minimizing cone with $\sing \cC = \{\bOh\}$. If $T \in \II^1_n(\RR^{n+1})$ is minimizing with $\partial T = 0$, $\spt T$ is connected, and $T$ does not cross $\cC$ smoothly, then
\[ \Theta_T(\bx) < \Theta_{\cC}(\bOh) \text{ for every } \bx \in \spt T \setminus \{ \bOh \}. \]
\end{proposition}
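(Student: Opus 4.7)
The plan is to split via Lemma \ref{lemm:disjoint.or.cross} applied to the pair $(T, \cC)$, both of which are connected, multiplicity-one, boundaryless minimizers (with $\cC$ nonflat because $\sing \cC \ne \emptyset$). The no-smooth-crossing hypothesis rules out alternative (iii), leaving either $T = \pm\cC$ or $\spt T \cap \spt \cC = \emptyset$. In the first case, $\spt T \setminus \{\bOh\} = \reg T$, so $\Theta_T(\bx) = 1 < \Theta_n^* \leq \Theta_\cC(\bOh)$ by Allard's regularity theorem.

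In the disjoint case, the first goal is the non-strict bound $\Theta_T(\bx) \leq \Theta_\cC(\bOh)$ for $\bx \in \spt T$. Monotonicity \eqref{eq:monotonicity.lambda} on $\RR^{n+1}$ (where $\Lambda = 0$) gives $\Theta_T(\bx) \leq \Theta_T(\infty) := \lim_{r\to\infty} \Theta_T(\bx,r)$, a quantity independent of $\bx$ and realized as $\Theta_{\cC_\infty}(\bOh)$ for any tangent cone at infinity $\cC_\infty$ of $T$, obtained as a varifold limit of $(\Dil_{\bOh, \lambda_i})_\# T$ with $\lambda_i \to \infty$. Each such rescaling is disjoint from $\cC$ because $\cC$ is $\Dil_{\bOh,\cdot}$-invariant; since both $\cC_\infty$ and $\cC$ contain $\bOh$, Lemma \ref{lemm:disjoint.or.cross} leaves only (i) $\cC_\infty = \pm\cC$ or (iii) smooth crossing. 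Alternative (iii) is impossible because Allard-type smooth convergence of the rescalings near a regular crossing point would force them to have points on both sides of $\cC$, contradicting disjointness. Hence $\cC_\infty = \pm\cC$ and $\Theta_T(\infty) = \Theta_\cC(\bOh)$.

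Upgrading to strict inequality proceeds by contradiction. If $\Theta_T(\bx_0) = \Theta_\cC(\bOh)$ for some $\bx_0 \in \spt T$, then $r \mapsto \Theta_T(\bx_0, r)$ is constant, and the rigidity case of monotonicity forces $T$ to be a minimizing cone at $\bx_0$; tracking the tangent-cone-at-infinity identification from the previous paragraph, $T - \bx_0$ equals the unique limit $\pm\cC$, so $T = \pm\cC + \bx_0$. Under the assumption $\bx_0 \neq \bOh$, I plan to reach a contradiction via the translation Jacobi field $\phi(\bz) := \pm\bx_0 \cdot \nu_\cC(\bz)$ on $\reg \cC$. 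Specifically, since $T = \pm\cC + \bx_0$ must lie in one component $E_+$ of $\RR^{n+1} \setminus \spt \cC$, the rescaled translates $\cC + \lambda^{-1}\bx_0$ ($\lambda \to \infty$) also lie in $E_+$, giving at leading order in $\lambda^{-1}$ the condition $\phi \geq 0$ on $\reg \cC$. The strong maximum principle for the Jacobi equation $(\Delta_\cC + |A_\cC|^2)\phi = 0$ on the connected regular set (Lemma \ref{lemm:reg.connected}) then yields $\phi \equiv 0$ or $\phi > 0$ strictly. In the first case $\bx_0$ is tangent to $\cC$ everywhere, so $\cC + s\bx_0 = \cC$ for all $s$, i.e., $\bx_0 \in \spine \cC = \{\bOh\}$, contradicting $\bx_0 \neq \bOh$. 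In the second case, $\phi$ is a strictly positive degree-$0$ homogeneous Jacobi field on $\reg \cC$; this contradicts the Simon--Wang spectral analysis of positive Jacobi fields on nonflat minimizing cones with trivial spine (recalled in Appendix \ref{app:positive.jacobi.fields.cones}), according to which every positive Jacobi field on $\reg \cC$ has a definite nonzero homogeneity exponent.

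The main obstacle is this last step, which excludes strictly positive degree-$0$ Jacobi fields on $\cC$ via the Simon--Wang theory of positive Jacobi fields from the appendix; the earlier parts of the argument are a standard combination of the monotonicity formula, its rigidity case, and the strong maximum principle for minimal hypersurfaces.
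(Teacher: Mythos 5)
Your argument is correct, and it is essentially the paper's proof of the more general Proposition \ref{prop:dens-drop-sing} specialized to the case $\sing \cC = \{\bOh\}$: blowdown plus monotonicity for the non-strict bound, rigidity of monotonicity to force $T = \pm\cC + \bx_0$ in the equality case, and the translation Jacobi field $\bx_0 \cdot \nu_\cC \geq 0$ killed by the strong maximum principle together with the Simon--Wang analysis of positive Jacobi fields. The paper itself dispatches Proposition \ref{prop:dens-drop-smooth} in one line --- it follows from the Hardt--Simon classification (Theorem \ref{thm:HS-fol}) plus Allard --- and reserves the monotonicity/Jacobi-field argument for the singular-cone generalization, so your route buys exactly what the paper's Proposition \ref{prop:dens-drop-sing} buys: independence from the uniqueness half of Theorem \ref{thm:HS-fol}. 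Two small points. First, before taking blowdowns you should record (as the paper does at the start of the proof of Proposition \ref{prop:dens-drop-sing}) that $T$ is a single minimizing \emph{boundary}: decompose $T$ by \cite[27.6]{Simon:GMT} into nested minimizing boundaries and use connectedness and multiplicity one; this is what guarantees that $\cC_\infty$ is a multiplicity-one boundary with connected support, so that Lemma \ref{lemm:disjoint.or.cross} applies to the pair $(\cC_\infty, \cC)$. Second, the Simon--Wang theory does not literally say that every positive Jacobi field has a nonzero homogeneity exponent (positive Jacobi fields need not be homogeneous); what you actually use is that a positive degree-$0$ homogeneous Jacobi field is bounded near $\sing\cC$ and violates the decay $V(r) \leq r^{-\kappa_n^*} V(1)$ of Lemma \ref{lem:Wang-2} (equivalently, the blow-up toward $\sing\cC$ from \cite{Simon:asymptotic-decay} that the paper invokes). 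In your smooth-link setting this last step can even be done without the appendix: a positive degree-$0$ homogeneous Jacobi field forces $\lambda_1(\Sigma) = 0$, contradicting the Simons bound $\lambda_1(\Sigma) \leq -(n-1)$.
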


Theorem \ref{thm:HS-fol} immediately implies Proposition \ref{prop:dens-drop-smooth} using Allard's regularity theorem, but Proposition \ref{prop:dens-drop-smooth} extends to cover minimizers on one side of arbitrary nonflat minimizing cones (i.e., arbitrarily singular ones) provided it is recast as a density-drop / cone-splitting dichotomy:

\begin{proposition} \label{prop:dens-drop-sing}
Suppose that $\cC^n \subset \RR^{n+1}$ is a nonflat minimizing cone. If $T \in \II^1_n(\RR^{n+1})$ is minimizing with $\partial T = 0$, $\spt T$ is connected, and $T$ does not cross $\cC$ smoothly, then
\[ \Theta_T(\bx) \leq \Theta_{\cC}(\bOh) \text{ for every } \bx \in \spt T. \]
Equality holds if and only if $T= \pm \cC$ and $\bx \in \spine \cC$. 
\end{proposition}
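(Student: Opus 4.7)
My plan is to first apply Lemma~\ref{lemm:disjoint.or.cross} to the pair $(T, \cC)$. Both are nonzero minimizing elements of $\II^1_n(\RR^{n+1})$ with vanishing boundary, and both have connected supports ($\spt \cC$ is connected by Lemma~\ref{lemm:reg.connected} applied to $\cC$). Together with the hypothesis that $T$ does not cross $\cC$ smoothly, this produces the dichotomy $T = \pm \cC$ or $\spt T \cap \spt \cC = \emptyset$.

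For the first alternative, I would use the cone scaling identity $\cC \cap B_r(\bx) = r(\cC \cap B_1(\bx/r))$ to rewrite $\Theta_\cC(\bx, r) = \omega_n^{-1} \MM(\cC \mres B_1(\bx/r))$, from which $\lim_{r \to \infty}\Theta_\cC(\bx, r) = \Theta_\cC(\bOh)$. Combined with the monotonicity of $r \mapsto \Theta_\cC(\bx, r)$, this yields $\Theta_T(\bx) = \Theta_\cC(\bx) \leq \Theta_\cC(\bOh)$, with equality iff $\Theta_\cC(\bx, \cdot)$ is constant in $r$, iff (by the rigidity case of the monotonicity formula and unique continuation) $\cC$ is a cone at $\bx$, iff $\bx \in \spine \cC$.

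For the disjoint alternative, I would prove the strict inequality $\Theta_T(\bx) < \Theta_\cC(\bOh)$ by induction on $k = \dim \sing \cC$. The base case $k = 0$ forces $\sing \cC = \{\bOh\}$ (since $\sing \cC$ is a subcone of $\cC$ centered at $\bOh$), so Proposition~\ref{prop:dens-drop-smooth} applies directly; disjointness ensures $\bOh \notin \spt T$ and closes the base. For the inductive step, assume for contradiction that some $\bx_0 \in \spt T$ has $\Theta_T(\bx_0) \geq \Theta_\cC(\bOh)$. I would extract a blow-down cone $T^\infty$ of $T$ at $\bx_0$ along a sequence of scales $\lambda_i \to \infty$: a minimizing cone at $\bOh$ with $\partial T^\infty = 0$ and $\Theta_{T^\infty}(\bOh) \geq \Theta_\cC(\bOh)$. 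The cone invariance of $\cC$ gives $(\Dil_{\bx_0,\lambda_i})_\# \cC = \cC + (1 - 1/\lambda_i)\bx_0 \to \cC + \bx_0$, a translated minimizing cone with vertex $\bx_0 \neq \bOh$. Applying Lemma~\ref{lemm:disjoint.or.cross} componentwise to $T^\infty$ and $\cC + \bx_0$ — combined with the observation that a $\bOh$-centered component of $T^\infty$ equal to $\pm(\cC + \bx_0)$ would force $\bx_0 \in \spine \cC \subseteq \spt \cC$, contradicting disjointness — yields $\spt T^\infty \cap \spt(\cC + \bx_0) = \emptyset$. A second application of Lemma~\ref{lemm:disjoint.or.cross} to the pair $(T^\infty, \cC)$ (which cannot be disjoint since both contain $\bOh$) gives either $T^\infty = \pm \cC$ — in which case the rigidity of $r \mapsto \Theta_T(\bx_0, r)$ promotes $T$ itself to a cone at $\bx_0$, so $T = T^\infty = \pm \cC$, forcing $\bx_0 \in \spine \cC$ and contradicting disjointness — or a smooth crossing of $T^\infty$ and $\cC$ along a full ray from $\bOh$ (by joint scale-invariance about $\bOh$), which I would leverage via cone-splitting to reduce $\cC$ to a nonflat minimizing cone of strictly smaller $\dim \sing$, invoking the inductive hypothesis to close the contradiction.

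The hardest step is this final cone-splitting reduction in the ``smooth crossing'' subcase: extracting from the transverse ray of crossings between $T^\infty$ and $\cC$ a translational symmetry of $\cC$ that strictly reduces $\dim \sing \cC$. I expect this to combine Ilmanen's strong maximum principle (to propagate crossing information through the blow-down limit) and unique continuation for minimizers with the cone-splitting mechanism recalled at the end of Section~\ref{sec:preliminaries}.
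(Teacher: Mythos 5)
There is a genuine gap, concentrated in the ``disjoint'' branch. First, a bookkeeping error: under any consistent convention for the dilations $\Dil_{\bx_0,\lambda_i}$, the image of the cone $\cC$ under the same dilations used to blow down $T$ at $\bx_0$ converges to $\cC$ itself, recentered wherever $T^\infty$ is centered (e.g.\ $(\cC-\bx_0)/\lambda_i=\cC-\bx_0/\lambda_i\to\cC$); it is not ``a translated minimizing cone with vertex $\bx_0\neq\bOh$''. Hence $T^\infty$ and the limit of $\cC$ share their vertex, cannot be disjoint, and (since non-crossing passes to such limits, as the paper notes) the only surviving alternative from Lemma~\ref{lemm:disjoint.or.cross} is $T^\infty=\pm\cC$. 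Your entire ``smooth crossing along a ray $+$ cone-splitting induction on $\dim\sing\cC$'' branch is therefore built on a miscomputation, and --- more importantly --- it is not where the difficulty lies.

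Second, and decisively: in the subcase $T^\infty=\pm\cC$ you write that rigidity makes $T$ a cone at $\bx_0$, ``so $T=T^\infty=\pm\cC$, forcing $\bx_0\in\spine\cC$.'' But a cone with vertex $\bx_0$ whose recentered blow-down is $\pm\cC$ is $\pm(\cC+\bx_0)$, not $\pm\cC$; identifying the two is exactly the assertion $\bx_0\in\spine\cC$, which is the crux of the proposition and cannot be read off. The paper proves it by observing that one-sidedness gives $\bar E_\cC+\lambda\bx_0\subset\bar E_\cC$ for all $\lambda>0$, hence $\bx_0\cdot\nu_\cC\geq 0$ is a Jacobi field on $\reg\cC$; by the strong maximum principle it is either identically zero (so $\cC$ splits and $\bx_0\in\spine\cC$) or strictly signed, and a strictly positive Jacobi field on a nonflat minimizing cone must blow up near $\sing\cC$ by Simon's asymptotic decay theory \cite{Simon:asymptotic-decay} (cf.\ \cite[Theorem 1.4]{Wang:smoothing}), contradicting the linear bound $|\bx_0\cdot\nu_\cC|\leq|\bx_0|$. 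Your proposal contains no substitute for this ingredient, and the induction on $\dim\sing\cC$ with base case Proposition~\ref{prop:dens-drop-smooth} does not supply one, since the inductive step is only sketched for the (spurious) crossing subcase. For contrast, the paper's argument is also much shorter: it first reduces $T$ to a single minimizing boundary, then a single blow-down identifies the tangent cone at infinity of $T$ with $\pm\cC$, whence $\Theta_T(\bx)\leq\lim_{r\to\infty}\Theta_T(\bx,r)=\Theta_\cC(\bOh)$ for every $\bx$ at once, with the equality case handled by the Jacobi-field argument above; no case split between $T=\pm\cC$ and disjointness is needed.
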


This is a key tool for our paper.

\begin{remark}
The existence part of Theorem \ref{thm:HS-fol} was recently proven without the assumption $\sing \cC = \{\bOh\}$ by Wang \cite{Wang:smoothing}. Additional uniqueness results have been proven by Simon \cite{Simon:liouville} and Edelen--Sz\'{e}kelyhidi \cite{EdelenSzekelyhidi:liouville}. If the uniqueness in the Hardt--Simon result (Theorem \ref{thm:HS-fol}) was known to hold for all minimizing cones, then combined with the strong maximum principle \cite{Simon:smp}, Proposition \ref{prop:dens-drop-sing} above would immediately follow, but the proofs of Theorems \ref{theo:main.plateau} and \ref{theo:main.homology} would not simplify significantly.\footnote{Lohkamp has claimed to extend the uniqueness statement in Theorem \ref{thm:HS-fol} to cover all minimizing cones \cite{Lohkamp:HS} using his theory of ``hyperbolic unfoldings.'' Even assuming such a uniqueness statement, it remained unclear prior to our paper how it could be used to prove Theorems \ref{theo:main.plateau}, \ref{theo:main.homology} in any dimension $n+1 \geq 9$; cf.\ the remark in \cite[p.\ 3]{Lohkamp:HS}.} 
\end{remark}

\begin{proof}[Proof of Proposition \ref{prop:dens-drop-sing}]
First note that $T$ decomposes a priori by \cite[Theorem 27.6]{Simon:GMT} into a sum of a priori several nested minimizing boundaries in $\RR^{n+1}$. It follows from Lemma \ref{lemm:disjoint.or.cross} and the unit multiplicity of $T$ that the nested boundaries have disjoint supports. But $\spt T$ is assumed to be connected, so $T$ is itself a minimizing boundary. 

For any $\lambda_j\to \infty$, the blowdown $(\Dil_{\bOh,\lambda_j})_\#T$ will subsequentially converge to a tangent cone at infinity $\cC'$.\footnote{Tangent cones at infinity exist when the density ratios $\Theta_T(\bx,r)$ are uniformly bounded as $r\to\infty$; here, $T$ is a boundary, which allows for comparison with parts of spheres (cf.\ \cite[Theorem 37.2]{Simon:GMT}).} Note that $\cC'$ is a minimizing boundary (since $T$ was) and does not cross $\cC$ smoothly (or else $T$ would, too). Since $\bOh \in \spt \cC\cap\spt \cC'$, we have $\cC' = \cC$ by Lemma \ref{lemm:disjoint.or.cross} up to swapping orientations. For $\bx \in \spt T$, we have by the monotonicity formula that 
\[
\Theta_T(\bx) \leq \lim_{r\to\infty} \Theta_T(\bx,r) = \Theta_{\cC'}(\bOh) =\Theta_{\cC}(\bOh), 
\]
as claimed.

Now, assume that $\Theta_T(\bx) = \Theta_{\cC}(\bOh)$ for some $\bx \in \spt T$. By the same computation, we see that $r\mapsto \Theta_T(\bx,r)$ is constant, so $T$ is a cone centered at $\bx$ by the rigidity case of the monotonicity formula. The blow-down of a cone centered at $\bx$ is the same cone centered at $\bOh$, so $(\Dil_{\bx,1})_\# T = \cC$. 

The proof will be completed once we show that $\bx \in \spine \cC$. We begin by assuming that $\spt T \cap \spt \cC \not = \emptyset$. By Lemma \ref{lemm:disjoint.or.cross} we find that $T = \pm \cC$. We then have
\[
\Theta_\cC(\bOh) = \Theta_{(\Dil_{\bx,1})_\# T}(\bOh) = \Theta_T(\bx) = \Theta_\cC(\bx) 
\]
which implies that $\bx \in \spine \cC$. 

Now suppose that $\spt T \cap \spt \cC = \emptyset$. We will show that this leads to a contradiction. Write $\cC = \pm \partial \llbracket E_\cC \rrbracket$ where $E_{\cC}$ is a dilation-invariant and connected open set (cf.\ \cite[Theorem 1]{BG}). Up to replacing $E_\cC$ with the interior of its complement, we see that 
\[  E_{\cC} + \bx \subset  E_{\cC}. \]
Indeed, $E_\cC$ are connected with connected boundaries that do not intersect. Since $E_\cC$ is dilation-invariant, we have
\[
E_{\cC} + \lambda \bx = (\Dil_{\bOh,\lambda^{-1}})_\#( E_{\cC} + \bx) \subset  E_{\cC}
\]
for any $\lambda > 0$. Sending $\lambda\to 0$ we see that 
\[ \bx \cdot \nu_\cC \geq 0 \text{ along } \reg \cC \]
for the unit normal $\nu_\cC$ to $\reg \cC$ pointing into $E_\cC$. Note that $\bx \cdot \nu_\cC$ is a Jacobi field along $\reg \cC$ since the minimal surface equation is dilation invariant. If $\bx\cdot \nu_\cC \equiv 0$, then $\cC$ splits in the $\bx$ direction, so $\bx \in \spine \cC$. Thus, it suffices to show that $\bx\cdot \nu_\cC \not \equiv 0$ leads to a contradiction. In this case, it must hold that $\bx\cdot \nu_\cC > 0$ by the strict maximum principle and connectedness of $\reg \cC$. Now, we can apply \cite{Simon:asymptotic-decay}  (see also \cite[Theorem 1.4]{Wang:smoothing}) to see that $\bx \cdot \nu_\cC$ has to blow up towards $\sing C$. However, $|\bx \cdot \nu_\cC| \leq r$ on $\cC \cap B_r$. This proves that $\sing \cC$ is empty so $\cC$ is flat. This is a contradiction. 
\end{proof}

Proposition \ref{prop:dens-drop-sing} has a crucial high-codimension-clustering consequence for high-density points of \textit{all} disjoint minimizers near our central one. 

\begin{lemma} \label{lemm:cone-split-pre}
Let $\gamma > 0$. There exists $\eta_0=\eta_0(n,\gamma) \in (0, \tfrac13)$ with the following property.

Take $\eta \in (0, \eta_0)$ and $T \in \II^1_n(B_{\eta^{-1}}(\bOh))$ to be minimizing with $\partial T = 0$ in $B_{\eta^{-1}}(\bOh)$, $\spt T$ connected, and with $\bOh \in \spt T$ satisfying 
\[ \Theta_{T}(\bOh) \geq \max\{ \Theta_n^*, \Theta_{T}(\bOh,1) \} - 3\eta. \]
Then, there exists a fixed subspace $\Pi \subset \RR^{n+1}$, with $\dim \Pi \leq n-7$, such that whenever $T' \in \II^1_n(B_{\eta^{-1}}(\bOh))$ is also minimizing with $\partial T' = 0$ in $B_{\eta^{-1}}(\bOh)$, has connected $\spt T'$, does not cross $T$ smoothly in $B_{\eta^{-1}}(\bOh)$, and $\bx' \in \spt T' \cap \bar B_{1}(\bOh)$ satisfies 
\[ \Theta_{T'}(\bx') \geq \max\{ \Theta_n^*, \Theta_{T}(\bOh,1) \} -3\eta, \]
it must be that $\bx' \in U_{\gamma}(\Pi)$ where $U_\gamma(\Pi)$ is the distance $< \gamma$ neighborhood of $\Pi$.
\end{lemma}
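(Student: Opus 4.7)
The plan is to argue by contradiction and compactness. Assume the conclusion fails for some $\gamma > 0$: there exist $\eta_j \downarrow 0$ and minimizers $T_j \in \II^1_n(B_{\eta_j^{-1}}(\bOh))$ satisfying the main hypotheses such that, for every linear subspace $\Pi \subset \RR^{n+1}$ with $\dim \Pi \leq n-7$, there is an admissible counterexample pair $T'_j(\Pi), \bx'_j(\Pi) \in \spt T'_j(\Pi) \cap \bar B_1$ with $\bx'_j(\Pi) \notin U_\gamma(\Pi)$. By the monotonicity formula the $T_j$ have locally bounded mass, so, realizing each $T_j$ as a boundary of a Caccioppoli set and using BV compactness, we may extract a subsequence along which $T_j \rightharpoonup T_\infty$ in $\RR^{n+1}$, with $T_\infty$ a unit-multiplicity minimizing boundary and $\partial T_\infty = 0$. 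The density-gap hypothesis $\Theta_{T_j}(\bOh, 1) - \Theta_{T_j}(\bOh) \leq 3\eta_j \to 0$, combined with continuity of mass at scale $1$ and upper semicontinuity of pointwise density, forces $\Theta_{T_\infty}(\bOh) = \Theta_{T_\infty}(\bOh, 1)$; monotonicity pins $\Theta_{T_\infty}(\bOh, r)$ constant on $(0,1]$, and its rigidity case yields $T_\infty \mres B_1 = \cC \mres B_1$ for a minimizing cone $\cC$ through $\bOh$. Extending $\cC$ by dilation and invoking unique continuation for minimizing currents (applied to each connected boundary piece of $T_\infty$) propagates the agreement globally, so $T_\infty = \cC$. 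The bound $\Theta_\cC(\bOh) = \lim \Theta_{T_j}(\bOh,1) \geq \Theta_n^*$ shows $\cC$ is nonflat, hence $\dim \spine \cC \leq n-7$ by \cite{Simons:minvar}.

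Now fix $\Pi := \spine \cC$ and apply the failure hypothesis with this $\Pi$ to each $j$, obtaining $T'_j$ and $\bx'_j \in \bar B_1 \setminus U_\gamma(\Pi)$. Along a further subsequence, $T'_j \rightharpoonup T'_\infty$, a unit-multiplicity minimizing boundary on $\RR^{n+1}$ with $\partial T'_\infty = 0$, and $\bx'_j \to \bx'_\infty \in \bar B_1 \setminus U_\gamma(\Pi)$ since the complement of $U_\gamma(\Pi)$ is closed. Upper semicontinuity of density gives $\Theta_{T'_\infty}(\bx'_\infty) \geq \liminf(\Theta_n^* - 3\eta_j) = \Theta_n^* > 1$, so $\bx'_\infty \in \spt T'_\infty$. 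If $T'_\infty$ crossed $\cC$ smoothly at some $p$, then $p \in \reg T'_\infty \cap \reg \cC$; continuity of the interior regularity scale (Lemma \ref{lemm:reg-scale-cts}) would give smooth convergence of $T'_j, T_j$ to $T'_\infty, \cC$ on a neighborhood of $p$, producing smooth crossings of $T'_j$ and $T_j$ for large $j$ and contradicting the hypothesis; hence $T'_\infty$ does not cross $\cC$ smoothly. Write $T'_\infty$ as a sum of its connected minimizing-boundary components (pairwise disjoint supports by Lemma \ref{lemm:disjoint.or.cross}), and let $\tilde T$ denote the unique component through $\bx'_\infty$; it is minimizing, unit-multiplicity, with $\partial \tilde T = 0$, connected $\spt \tilde T$, does not cross $\cC$ smoothly, and satisfies $\Theta_{\tilde T}(\bx'_\infty) = \Theta_{T'_\infty}(\bx'_\infty) \geq \Theta_\cC(\bOh)$.

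Proposition \ref{prop:dens-drop-sing} applied to $\cC$ and $\tilde T$ gives $\Theta_{\tilde T}(\bx'_\infty) \leq \Theta_\cC(\bOh)$, so equality holds and its rigidity clause forces $\bx'_\infty \in \spine \cC = \Pi$, contradicting $\bx'_\infty \notin U_\gamma(\Pi)$. The main technical obstacle I anticipate is upgrading ``$T_\infty \mres B_1$ is a cone'' to ``$T_\infty$ is a minimizing cone on all of $\RR^{n+1}$,'' so that Proposition \ref{prop:dens-drop-sing} applies verbatim; this combines unique continuation for minimizing currents with the nested-boundary decomposition and is sensitive to possible disconnectedness of the support. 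A secondary point is transferring the non-smooth-crossing condition to the limit pair $(\cC, T'_\infty)$, handled by smooth convergence of regular parts via Lemma \ref{lemm:reg-scale-cts}.
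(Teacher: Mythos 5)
Your proposal is correct and follows essentially the same route as the paper's proof: a contradiction/compactness argument in which the density pinching plus the rigidity case of the monotonicity formula (and unique continuation) identifies the limit of the $T_j$ as a nonflat minimizing cone $\cC$, one fixes $\Pi = \spine\cC$, passes the counterexample points to the limit, and invokes the equality case of Proposition \ref{prop:dens-drop-sing} to force $\bx'_\infty \in \spine\cC$. The only cosmetic differences are that the paper notes connectedness of $\spt T'_\infty$ is inherited from the $T'_j$ (rather than decomposing into components) and is terser about why non-smooth-crossing and the reduction to minimizing boundaries pass to the limit, both of which you handle adequately.
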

\begin{proof} 
Assume, for the sake of contradiction, that for all sufficiently large $j$ there exist $T_j$ satisfying the conditions above with $\eta = j^{-1}$, so in particular
\begin{equation} \label{eq:cone-splitting-pre-center}
	\bOh \in \spt T_j \text{ and } \Theta_{T_j}(\bOh) \geq \max\{ \Theta_n^*, \Theta_{T_j}(\bOh,1) \} - j^{-1}
\end{equation}
and so that for every subspace $\Pi \subset \RR^{n+1}$ with $\dim \Pi \leq n-7$ there is a $T'_{\Pi, j}$ as above with
\begin{equation} \label{eq:cone-splitting-pre-leaf}
	\bx_{\Pi,j}' \in \spt T_{\Pi, j}' \cap \bar B_1(\bOh) \setminus U_{\gamma}(\Pi), \; \Theta_{T'_{\Pi, j}}(\bx'_{\Pi, j}) \geq \max\{ \Theta_n^*, \Theta_{T_j}(0,1)\} - j^{-1}.
\end{equation}
Arguing as in the start of the proof of Proposition \ref{prop:dens-drop-sing}, $T_j$, $T_j'$ are minimizing  boundaries in $B_j(\bOh)$. Passing to a subsequence (not relabeled), $T_j \rightharpoonup T$ is a minimizing boundary in $\RR^{n+1}$ whose support is necessarily connected (see \cite[Theorem 1]{BG}). Moreover, by \eqref{eq:cone-splitting-pre-center} and the upper-semicontinuity of density,
\begin{equation} \label{eq:cone-splitting-post-center}
	\Theta_{T}(\bOh) \geq \max \{ \Theta_n^*, \Theta_{T}(\bOh,1) \}.
\end{equation}
By the monotonicity formula and its rigidity case, together with unique continuation, $T$ is a \textit{nonflat} minimizing cone, say $T = \cC$. 

We now fix $\Pi := \spine \cC$, which we know is $\leq (n-7)$-dimensional since $\cC$ is a nonflat minimizing cone. Passing to a further subsequence (not relabeled) we have $T'_{\Pi, j} \rightharpoonup T'$ for a minimizing boundary in $\RR^{n+1}$. Note that, by the monotonicity formula, $\spt T'$ is connected since each $\spt T'_{\Pi,j}$ was connected. Also, $T'$ cannot cross $T$ smoothly since $T_j$ and $T_j'$ are assumed not to cross smoothly for any $j$. Finally, note that
\[ \bx_{\Pi,j}' \to \bx' \in \spt T' \cap \bar B_1(\bOh) \setminus U_\gamma(\Pi), \]
with
\begin{equation} \label{eq:cone-splitting-post-leaf}
	\Theta_{T'}(\bx') \geq \max\{ \Theta_n^*, \Theta_T(\bOh, 1) \} = \Theta_\cC(\bOh)
\end{equation}
by \eqref{eq:cone-splitting-pre-leaf}, the upper-semicontinuity of densities, the definition of $\Theta_n^*$, and $T = \cC$ being nonflat. By Proposition \ref{prop:dens-drop-sing}, $T' = \pm \cC$ and $\bx' \in \spine \cC \cap \bar B_1(\bOh) \setminus U_\gamma(\Pi)$, contradicting $\spine \cC = \Pi$.
\end{proof}

\begin{remark} \label{rema:cone-split-pre}
	Using straightforward modifications to the proof of Lemma \ref{lemm:cone-split-pre}, we may assume $B_{\eta^{-1}}(\bOh)$ is endowed with a background metric $g$ that is $\eta$-close in $C^2$ to being Euclidean. Indeed, in our sequence of counterexamples we can also keep track of metrics $g_j$ on $B_j(\bOh)$ that are $j^{-1}$-close to Euclidean in $C^2$. In the limit, the $g_j$ converge in $C^2$ to a Euclidean metric on $\RR^{n+1}$. The approximate monotonicity formula has a correction factor that vanishes as $j \to \infty$ and guarantees the upper-semicontinuity of densities in the limit as $j \to \infty$. Thus, \eqref{eq:cone-splitting-post-center}, \eqref{eq:cone-splitting-post-leaf} still hold in the $\RR^{n+1}$ blow-up limit, leading to the same contradiction.
\end{remark}

\subsection{Local-to-global separation estimates}

The partial Harnack theory for positive Jacobi fields developed in  \cite{Simon:asymptotic-decay}, with suitable modifications from \cite[Appendix A]{Wang:smoothing}, will let us prove local-to-global implications of the clustering of high-density singular points of non-intersecting minimizers. We point the reader to Appendix \ref{app:positive.jacobi.fields.cones} for the necessary partial Harnack theory background material.

\begin{definition} \label{defi:jacobi.field}
If $\cC^n \subset \RR^{n+1}$ is a minimizing cone, we say that $u$ is a Jacobi field on $\reg \cC$ if $u : \reg \cC \to \RR$ is a classical solution of the Jacobi field equation
\[ \Delta_\cC u + |A_\cC|^2 u = 0 \text{ on } \reg \cC, \]
where $\Delta_\cC$ is the induced Laplace-Beltrami operator and $A_\cC$ is the second fundamental form of $\reg \cC$ in $\RR^{n+1}$. 
\end{definition}

Note, in particular, that we place no growth assumptions on $u$ near $\sing \cC$. Positive Jacobi fields will play a particularly important role in our paper, because they arise in situations where we have two disjoint minimizers  converging to the minimizing cone $\cC$. In that case, the size of the positive Jacobi field informs the vertical separation of our minimizers from each other. 

\begin{lemma}\label{lemm:growth-estimate}
There exists $\rho_0 = \rho_0(n) > 0$ such that for any nonflat minimizing cone $\cC^n$ in $\RR^{n+1}$ and any positive Jacobi field $u$ on $\reg \cC$, 
\[
\inf_{\cR_{\geq r\rho}(\cC) \cap \partial B_r(\bOh)} u \leq r^{-\kappa_n^*} \sup_{\cR_{\geq \rho}(\cC) \cap \partial B_1(\bOh)} u 
\]
for all $r\geq 1$ and $\rho \in (0, \rho_0)$. Here, for $n \geq 7$, $\kappa_n^*$ is explicitly given by
\begin{equation}\label{def:gamma_n}
\kappa_n^* : = \frac{n-2}{2} - \sqrt{\frac{(n-2)^2}{4} - (n-1)}.
\end{equation}
\end{lemma}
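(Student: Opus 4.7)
The exponent $\kappa_n^*$ is the smaller root of the indicial equation $\kappa^2 - (n-2)\kappa + (n-1) = 0$. This choice is pinned down by the following direct calculation: the radial function $v(\bx) := |\bx|^{-\kappa_n^*}$ on $\cC$ satisfies
\[
\Delta_\cC v + |A_\cC|^2 v = |\bx|^{-\kappa_n^* - 2}\bigl(|A_L|^2 - (n-1)\bigr),
\]
where $L = \cC \cap \partial B_1(\bOh)$; in particular, $v$ is a supersolution of the Jacobi equation precisely on the portion of $\reg \cC$ where $|A_L|^2 \leq n-1$. I view the lemma as a quantitative form of Simon's asymptotic decay theorem for positive Jacobi fields on nonflat minimizing cones, and my plan is to prove it by combining an explicit barrier comparison with $v$ on a ``coarse'' deep regular subset with the Harnack-type framework of Simon and Wang recalled in Appendix~\ref{app:positive.jacobi.fields.cones}.

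First I would argue on the coarse deep subset $\{|A_L|^2 \leq n-1\} \cap \reg \cC$, which, by the defintion of regularity scale, contains $\cR_{\geq \rho_0'}(\cC)$ for an explicit $\rho_0' = \rho_0'(n) > 0$. On this coarse subset the operator $\Delta_\cC + |A_\cC|^2$ admits a genuine maximum principle on the annular domain $\{1 \leq |\bx| \leq r\}$ (cf. the transformed equation $r^2 \partial_r^2 w + ((n-1) - 2\kappa_n^*) r \partial_r w + \Delta_L w + (|A_L|^2 - (n-1)) w = 0$ for $w := |\bx|^{\kappa_n^*} u$). Comparison of $u$ with $c \cdot v$ for $c := \sup_{\partial B_1 \cap \text{coarse}} u$, together with a boundary Harnack chain along the regular part of $L$, then gives $\inf_{\partial B_r \cap \text{coarse}} u \lesssim r^{-\kappa_n^*} \sup_{\partial B_1 \cap \text{coarse}} u$.

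The main obstacle is to extend the estimate to the finer deep sets $\cR_{\geq \rho}(\cC)$ with $\rho \in (0, \rho_0)$ arbitrarily small, since in these regions one can have $|A_L|^2 > n-1$ and the explicit barrier $v$ is no longer a supersolution. I plan to bridge this gap by invoking the scale-invariant Caccioppoli/Harnack estimates for positive Jacobi fields from \cite{Simon:asymptotic-decay} and \cite[Appendix A]{Wang:smoothing} collected in Appendix~\ref{app:positive.jacobi.fields.cones}. These propagate $\inf$/$\sup$ comparisons across a neighborhood of $\sing L$ at unit multiplicative cost, allowing one to transfer the coarse-subset estimate from the previous paragraph into an estimate on $\cR_{\geq \rho}(\cC)$ whenever $\rho_0 = \rho_0(n)$ is taken small enough to absorb the multiplicative constant. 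This transition from pointwise barrier arguments on the coarse deep set to the weighted integral Simon--Wang framework near $\sing L$ is the principal technical step of the proof.
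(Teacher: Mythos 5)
Your identification of $\kappa_n^*$ as the smaller indicial root and your computation that $v=|\bx|^{-\kappa_n^*}$ satisfies $\Delta_\cC v+|A_\cC|^2v=|\bx|^{-\kappa_n^*-2}(|A_L|^2-(n-1))$ are both correct, but the argument built on them has a fatal gap at its central step. To compare $u$ with $c\cdot v$ on the truncated cone $\{1\le|\bx|\le r\}\cap\{|A_L|^2\le n-1\}$ via the maximum principle you need $u\le c\,v$ on the \emph{entire} boundary of that region, and you only have it on the inner sphere: there is no a priori control of $u$ on the outer sphere $\partial B_r$ (that is essentially what is being proved) nor on the lateral boundary, the cone over $\{|A_L|^2=n-1\}\cup\partial(\text{coarse set})$, where $u$ may be arbitrarily large. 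A decisive sanity check: every ingredient of your first step --- the supersolution property of $v$ on $\{|A_L|^2\le n-1\}$, the maximum principle, and Harnack chains in the regular part --- is equally available on the flat hyperplane, where $u\equiv 1$ is a positive Jacobi field violating the conclusion. So the decay cannot come from the pointwise supersolution property of $r^{-\kappa_n^*}$; it must come from the instability of the nonflat cone, i.e.\ from $\lambda_1(\cR_{\ge 2\rho_0}(\Sigma))\le-(n-1)$ (Lemma \ref{lem:Wang-1}), which your step 1 never uses. The paper's proof is precisely the mechanism that converts this spectral input plus the \emph{global} positivity of $u$ into decay: choose a smooth compact $\Omega$ with $\cR_{\ge\rho_0}(\Sigma)\subset\Omega\subset\cR_{\ge\rho}(\Sigma)$, form the weighted average $V(r)=\int_\Omega\varphi_\Omega\,u(r\,\cdot)\,d\mu_\Sigma$ with the principal Dirichlet eigenfunction $\varphi_\Omega$, and use Lemma \ref{lem:Wang-2} to get $V(r)\le r^{-\kappa_n^*}V(1)$; the vanishing of $\varphi_\Omega$ on $\partial\Omega$ together with $\partial_\nu\varphi_\Omega\le 0$ and $u>0$ is exactly what eliminates the uncontrollable lateral boundary terms, and the lemma follows by sandwiching $V$ between $\inf_{\cR_{\ge\rho}(\Sigma)}u(r\,\cdot)\int\varphi_\Omega$ and $\sup_{\cR_{\ge\rho}(\Sigma)}u\int\varphi_\Omega$.

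Two further points on your ``principal technical step.'' First, the passage from $\rho_0$ to arbitrary $\rho\in(0,\rho_0)$ is not where the difficulty lies: since $\cR_{\ge\delta}$ grows as $\delta$ shrinks, the infimum on the left only decreases and the supremum on the right only increases when $\rho$ decreases, so (granting nonemptiness, which the paper addresses separately in Proposition \ref{prop:Harnack-cone}) the statement for small $\rho$ follows from the one at scale $\rho_0$ by monotonicity alone. Second, the transfer you actually would need --- from $\{|A_L|^2\le n-1\}$, which contains points arbitrarily close to $\sing L$, into $\cR_{\ge r\rho}(\cC)$ --- cannot be done ``at unit multiplicative cost'': positive Jacobi fields on nonflat minimizing cones blow up toward $\sing\cC$ (this is used in the proof of Proposition \ref{prop:dens-drop-sing}), so Harnack constants along chains approaching $\sing L$ are not uniform.
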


\begin{remark} \label{rema:kappa.n}
It is crucial for our argument that $\kappa_n^* > 1$ for all $n$. Most importantly,
\begin{align*}
	\kappa_8^* & \approx 1.58, \\
	\kappa_9^* & \approx 1.44, \\
	\kappa_{10}^* & \approx 1.35.
\end{align*}
\end{remark}

\begin{proof}[Proof of Lemma \ref{lemm:growth-estimate}]
We take $\rho_0$ to be as in Lemma \ref{lem:Wang-1} and will also use the notation $\cR_{\geq \delta}(\Sigma)$ from \eqref{eq:r.link}. Invoke Lemma \ref{lem:Wang-2} with a smooth compact domain $\Omega$ satisfying $\cR_{\geq \rho_0}(\Sigma) \subset \Omega \subset \cR_{\geq \rho}(\Sigma)$ to deduce
\begin{align*}
	\left( \inf_{\omega \in \cR_{\geq \rho}(\Sigma)} u(r \omega) \right) \int_{\Omega}\varphi_\Omega \, d\mu_\Sigma(\omega) 
		& \leq V(r) \\
		& \leq r^{-\kappa_n^*} V(1) \\
		& \leq r^{-\kappa_n^*} \left( \sup_{\omega \in \cR_{\geq \rho}(\Sigma)} u(\omega) \right) \int_\Omega \varphi_\Omega \, d\mu_\Sigma.  
\end{align*}
The result follows by observing that $r(\cR_{\geq \rho}(\cC) \cap \partial B_1(\bOh)) = \cR_{\geq r\rho}(\cC) \cap \partial B_r(\bOh)$.
\end{proof}

We will show that the growth estimate above together with the Harnack inequality implies a growth estimate for the infimum of a positive Jacobi field $u$ on $\reg C$.

\begin{proposition}\label{prop:Harnack-cone}
There exist $\rho_1=\rho_1(n) \in (0, \rho_0(n))$ and $H = H(n) > 0$ such that if $\cC^n$ is a nonflat minimizing cone in $\RR^{n+1}$ and $u$ is a positive Jacobi field on $\reg \cC$, then, for every $r > 0$,
\[ \cR_{\geq r \rho_1}(\cC)\cap \partial B_r(\bOh) \neq \emptyset \]
and
\[
 \sup_{\cR_{\geq r \rho_1/2}(\cC) \cap \partial B_r(\bOh)} u \leq H \inf_{\cR_{\geq r \rho_1/2}(\cC) \cap \partial B_r(\bOh)} u.
\]
\end{proposition}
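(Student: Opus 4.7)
The plan is to reduce both assertions to the unit scale $r = 1$ via the cone's dilation symmetry, and then prove each by a compactness-contradiction argument based on Federer's compactness, upper semicontinuity of density, Lemma \ref{lemm:reg-scale-cts}, Lemma \ref{lemm:reg.connected}, and classical interior elliptic Harnack on $\reg \cC$; the Simon--Wang partial Harnack machinery is not needed here since we stay a definite distance from $\sing \cC$ throughout. Indeed, $\cR_{\geq r\rho}(\cC) \cap \partial B_r(\bOh) = \Dil_{\bOh,r}(\cR_{\geq \rho}(\cC) \cap \partial B_1(\bOh))$ by the dilation invariance of $\cC$, and $\bx \mapsto u(r\bx)$ is again a positive Jacobi field on $\reg \cC$ whenever $u$ is (using the $(-2)$-homogeneity of $\Delta_\cC$ and $|A_\cC|^2$ on a cone). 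Thus both claims reduce to $r = 1$, with $\rho_1, H$ to be produced depending only on $n$.

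\emph{Nonemptiness.} Suppose no such $\rho_1 \in (0, \rho_0)$ exists. Take non-flat minimizing cones $\cC_j \subset \RR^{n+1}$ with $\cR_{\geq 1/j}(\cC_j) \cap \partial B_1(\bOh) = \emptyset$. Federer compactness (the mass of any minimizing boundary in $B_R$ with no interior boundary is dimensionally bounded via comparison with a spherical cap) extracts $\cC_j \rightharpoonup \cC_\infty$, itself a minimizing cone. Since densities on cones are scale-constant and $\Theta_{\cC_j}(\bOh) \geq \Theta_n^*$ for all $j$, the limit satisfies $\Theta_{\cC_\infty}(\bOh) \geq \Theta_n^* > 1$, so $\cC_\infty$ is non-flat. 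Pick $\bx_\infty \in \reg \cC_\infty \cap \partial B_1(\bOh)$ (nonempty since $\sing \cC_\infty \cap \partial B_1$ has Hausdorff dimension $\leq n - 8$). Local smooth convergence of $\cC_j$ near $\bx_\infty$ yields $\bx_j \in \cC_j \cap \partial B_1(\bOh)$ with $\bx_j \to \bx_\infty$, and Lemma \ref{lemm:reg-scale-cts} gives $r_{\cC_j}(\bx_j) \to r_{\cC_\infty}(\bx_\infty) > 0$, contradicting $r_{\cC_j}(\bx_j) < 1/j$.

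\emph{Harnack.} With the above $\rho_1$ fixed, suppose no dimensional $H$ works. Take non-flat $\cC_j$ and positive Jacobi fields $u_j$ on $\reg \cC_j$ with $\sup_{K_j} u_j > j \inf_{K_j} u_j$, for $K_j := \cR_{\geq \rho_1/2}(\cC_j) \cap \partial B_1(\bOh)$. Normalize $u_j$ so that $u_j(\bx_j^\star) = \inf_{K_j} u_j = 1$ at some $\bx_j^\star \in K_j$, and choose $\by_j \in K_j$ with $u_j(\by_j) > j$. Extract $\cC_j \rightharpoonup \cC_\infty$ non-flat, with $\bx_j^\star \to \bx_\infty^\star$ and $\by_j \to \by_\infty$, both in $\cR_{\geq \rho_1/2}(\cC_\infty) \subset \reg \cC_\infty$ by Lemma \ref{lemm:reg-scale-cts}. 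Since $\spt \cC_\infty$ is connected (hence so is $\reg \cC_\infty$, by Lemma \ref{lemm:reg.connected}), fix a smooth path from $\bx_\infty^\star$ to $\by_\infty$ through a relatively compact set $V \Subset \reg \cC_\infty$; by smooth convergence on a neighborhood of $V$, for large $j$ this path lifts to a path on $\cC_j$ from $\bx_j^\star$ to $\by_j$ with uniformly bounded second fundamental form and bounded intrinsic length. Chaining classical interior Harnack over a fixed finite ball cover of the lifted path yields $u_j(\by_j) \leq C \cdot u_j(\bx_j^\star) = C$ for some dimensional $C$ (extracted from the compactness argument and the dimensional mass bound on $\cC_j \cap B_2$), contradicting $u_j(\by_j) > j$ once $j > C$.

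\emph{Main obstacle.} The technical crux is to extract uniform (dimensional) control on the Harnack chain from the compactness-contradiction framework. Two points deserve care: verifying that $\spt \cC_\infty$ is connected so that $\bx_\infty^\star, \by_\infty$ lie in a common connected component of $\reg \cC_\infty$ (if not, one must argue component-wise using connectedness of each $\cC_j$'s link), and showing that the count of Harnack balls and the ellipticity constants along the lifted paths are uniform across all large $j$---this follows from smooth convergence on neighborhoods of $V$ combined with the dimensional mass bound on $\cC_j \cap B_2$, which bounds the total $n$-volume and hence the covering count at scale $\rho_1/4$.
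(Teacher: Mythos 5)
Your proof is correct and follows the same compactness--contradiction strategy as the paper for both assertions: rescale to $r=1$, and for each claim extract a limiting minimizing cone from a putative sequence of counterexamples, using Lemma \ref{lemm:reg-scale-cts} to place the relevant points in the regular part of the limit. The only substantive difference is the endgame of the Harnack estimate: the paper normalizes $\sup_{K_j} u_j = 1$, passes the Jacobi fields themselves to a limit $u$ (locally smoothly in $\reg\cC_\infty$, via the interior Harnack inequality and Schauder estimates), and derives the contradiction from the strict maximum principle applied to the nonnegative limit Jacobi field, which attains the value $1$ at one point of $\reg\cC_\infty$ and $0$ at another; you instead keep the argument at the level of finite $j$ and chain the classical interior Harnack inequality along a path lifted from a compact subset of $\reg\cC_\infty$ to $\cC_j$. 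Both endings rest on the same ingredients, namely the connectedness of $\reg\cC_\infty$ via Lemma \ref{lemm:reg.connected} (which in turn uses the standard fact that the support of a multiplicity-one minimizing cone is connected, since any two components would meet at the origin and hence coincide by the strong maximum principle --- a point you rightly flag and on which the paper is equally terse). One small correction: the constant $C$ produced by your Harnack chain depends a priori on the chosen subsequence and its limit cone, not only on $n$, so it is not ``dimensional'' as stated; but this is harmless, since any finite bound valid for all large $j$ along the subsequence already contradicts $u_j(\by_j) > j$.
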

\begin{proof}
By scaling invariance it suffices to set $r = 1$. 

We first show there exists $\rho_1 \in (0, \rho_0)$ so that $\cR_{\geq \rho_1}(\cC)\cap \partial B_1(\bOh) \neq \emptyset$ for all minimizing cones $\cC^n\subset \RR^{n+1}$. Otherwise, for all $j > \rho_0^{-1}$ there would exist $\cC_j$ with
\begin{equation} \label{eq:Harnack-cone.scale.contradiction}
	\cR_{\geq j^{-1}}(\cC_j) \cap \partial B_1(\bOh) = \emptyset.
\end{equation}
After passing to a subsequence (not relabeled), we have $\cC_j\rightharpoonup \cC$ for a minimizing cone $\cC$. This contradicts \eqref{eq:Harnack-cone.scale.contradiction} together with Lemma \ref{lemm:reg-scale-cts} and $\cup_{\rho > 0} \cR_{\geq \rho}(\cC) = \reg \cC \neq \emptyset$.

We now claim that this choice of $\rho_1$ satisfies the assertion for $H$ sufficiently large. If not, there are cones $\cC_j$ and positive Jacobi fields $u_j$ on $\reg \cC_j$ such that (after rescaling)
\begin{equation} \label{eq:Harnack-cone.sup.contradiction}
	\sup_{\cR_{\geq \rho_1/2}(\cC_j)\cap\partial B_1(\bOh)} u_j = 1,
\end{equation} 
\begin{equation} \label{eq:Harnack-cone.inf.contradiction}
	\inf_{\cR_{\geq \rho_1/2}(\cC_j)\cap\partial B_1(\bOh)} u_j \to 0.
\end{equation}
By Allard's regularity theorem, the standard Harnack inequality, and standard Schauder estimates, we know that after passing to a subsequence (not relabeled), $\cC_j \rightharpoonup \cC$ and $u_j \to u$ locally smoothly in $\reg \cC$, where $\cC$ is a minimizing cone and $u$ is a nonnegative Jacobi field on $\reg \cC$, $u=1$ at one point on $\reg \cC$ (by \eqref{eq:Harnack-cone.sup.contradiction}),  and $u = 0$ at another point of $\reg \cC$ (by \eqref{eq:Harnack-cone.inf.contradiction}). This contradicts the strict maximum principle since $\reg \cC$ is connected (by Lemma \ref{lemm:reg.connected}). 
\end{proof}

Combining Lemma \ref{lemm:growth-estimate} and (twice) Proposition \ref{prop:Harnack-cone} yields the following.

\begin{corollary}\label{coro:decay-est-Jacobi}
There exist $\rho_1=\rho_1(n) \in (0, \rho_0(n))$ and $H = H(n) > 0$ such that for any nonflat minimizing cone  $\cC^n$ in $\RR^{n+1}$ and any positive Jacobi field $u$ on $\reg \cC$, 
\[
\sup_{\cR_{\geq r\rho_1/2}(\cC) \cap \partial B_r(\bOh)} u \leq H^2 r^{-\kappa_n^*} \inf_{\cR_{\geq \rho_1/2}(\cC) \cap \partial B_1(\bOh)} u 
\]
for $\kappa_n^*$ as in \eqref{def:gamma_n} and all $r \geq 1$. 
\end{corollary}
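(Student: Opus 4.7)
The plan is to directly combine Lemma \ref{lemm:growth-estimate} and Proposition \ref{prop:Harnack-cone}, taking the $\rho_1$ and $H$ in the corollary to be the same as those produced by Proposition \ref{prop:Harnack-cone} (in particular, $\rho_1 \in (0, \rho_0)$, so Lemma \ref{lemm:growth-estimate} is applicable with the value $\rho = \rho_1$). Note also that the set $\cR_{\geq r \rho_1}(\cC) \cap \partial B_r(\bOh)$ on the left-hand side is nonempty for all $r > 0$, by the first assertion of Proposition \ref{prop:Harnack-cone}, so the infimum is well-defined.

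First, I would invoke Lemma \ref{lemm:growth-estimate} with $\rho = \rho_1$, giving, for all $r \geq 1$,
\[
\inf_{\cR_{\geq r\rho_1}(\cC) \cap \partial B_r(\bOh)} u \;\leq\; r^{-\kappa_n^*} \sup_{\cR_{\geq \rho_1}(\cC) \cap \partial B_1(\bOh)} u.
\]
The goal is then to replace the supremum on the right by a constant multiple of the corresponding infimum, which is exactly the content of the Harnack inequality in Proposition \ref{prop:Harnack-cone} (applied at scale $r=1$).

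Proposition \ref{prop:Harnack-cone} is stated for $\cR_{\geq \rho_1/2}(\cC) \cap \partial B_1(\bOh)$, so I need a brief set-monotonicity argument to transfer it to $\cR_{\geq \rho_1}(\cC) \cap \partial B_1(\bOh)$. Since $\cR_{\geq \rho_1}(\cC) \subset \cR_{\geq \rho_1/2}(\cC)$, we have
\[
\sup_{\cR_{\geq \rho_1}(\cC) \cap \partial B_1(\bOh)} u \;\leq\; \sup_{\cR_{\geq \rho_1/2}(\cC) \cap \partial B_1(\bOh)} u \;\leq\; H \inf_{\cR_{\geq \rho_1/2}(\cC) \cap \partial B_1(\bOh)} u \;\leq\; H \inf_{\cR_{\geq \rho_1}(\cC) \cap \partial B_1(\bOh)} u,
\]
where the middle inequality is Proposition \ref{prop:Harnack-cone} and the last uses that the infimum over a larger set is no larger than the infimum over a smaller subset. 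Substituting this bound into the display from the previous paragraph gives the desired inequality.

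There is no substantive obstacle here — the corollary is a direct chaining of the two preceding results, and the only care needed is in tracking that the set inclusions pull inequalities in the right direction. Any subtlety (e.g., non-emptiness of the sets involved or the admissibility of $\rho_1$ in Lemma \ref{lemm:growth-estimate}) has already been arranged by the choice of $\rho_1 < \rho_0$ made in Proposition \ref{prop:Harnack-cone}.
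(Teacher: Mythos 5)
Your proposal is correct and is exactly the intended argument: the paper gives no explicit proof, stating only that the corollary follows by combining Lemma \ref{lemm:growth-estimate} and Proposition \ref{prop:Harnack-cone}, and your chaining (including the set-monotonicity step to pass between the $\rho_1/2$- and $\rho_1$-regularity scales, and the non-emptiness check from Proposition \ref{prop:Harnack-cone}) fills in the details correctly.
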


The decay estimate for positive Jacobi fields on cones can be ``delinearized'' to yield the following distance decay estimate for non-intersecting minimizers, when one of them is close to a cone. 

Fix $\rho_2 = \rho_2(n) := \tfrac12 \rho_1(n)$ ($< \rho_1(n) < \rho_0(n)$).

\begin{lemma}\label{lemm:sep-est}
Let $\lambda \in (0, \kappa_n^*+1)$. There exist $\eta_1=\eta_1(n,\lambda) \in (0, \tfrac13)$, $A_0=A_0(n,\lambda)>1$ with the following property: 

If $\eta \in (0,\eta_1)$, $T, T' \in \II^1_n(B_{\eta^{-1}}(\bOh))$ are minimizing with $\partial T = \partial T' = 0$ in $B_{\eta^{-1}}(\bOh)$, $\spt T$ and $\spt T'$ are connected, $T$ and $T'$ do not cross smoothly in $B_{\eta^{-1}}(\bOh)$, and 
\[
\min\{\Theta_T(\bOh),\Theta_{T'}(\bx')\} \geq \max\{ \Theta_n^*, \Theta_T(\bOh,1) \} - 3\eta,
\]
with $\bx' \in \bar B_1(\bOh)$, then
\[ \cR_{\geq \rho_2}(T) \cap \partial B_1(\bOh) \neq \emptyset, \; \cR_{\geq A_0 \rho_2}(T) \cap \partial B_{A_0}(\bOh) \neq \emptyset, \]
and
\[
	d(\cR_{\geq A_0\rho_2}(T) \cap \partial B_{A_0}(\bOh), \spt T') / A_0 \leq A_0^{-\lambda} \cdot d(\cR_{\geq \rho_2}(T) \cap \partial B_1(\bOh), \spt T').
\]
Above, $d$ denotes the infimum of pairwise distances. 
\end{lemma}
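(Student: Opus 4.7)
The plan is a compactness-contradiction argument that delinearizes the separation between the two minimizers into a positive Jacobi field on a limiting nonflat minimizing cone, and then applies Corollary \ref{coro:decay-est-Jacobi}.

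First I fix $A_0 = A_0(n, \lambda) > 1$ (to be pinned down at the end). Supposing for contradiction that no threshold $\eta_1$ works for this $A_0$, we obtain sequences $\eta_j \to 0$ and $T_j, T_j'$ satisfying all hypotheses but violating one of the three conclusions. Exactly as in the proof of Proposition \ref{prop:dens-drop-sing}, the density condition on $T_j$, the monotonicity formula with its rigidity case, unique continuation, and weak compactness of minimizing boundaries force a subsequential limit $T_j \rightharpoonup \cC$ that is a nonflat minimizing cone centered at $\bOh$; the analogous argument using the density condition on $\bx_j' \to \bx_\infty' \in \bar B_1(\bOh)$, the non-smooth-crossing, and Proposition \ref{prop:dens-drop-sing} itself forces $T_j' \rightharpoonup \pm \cC$. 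The nonemptiness assertions follow from Proposition \ref{prop:Harnack-cone} applied to $\cC$ (which provides points in $\cR_{\geq r\rho_1}(\cC) \cap \partial B_r(\bOh)$) together with Lemma \ref{lemm:reg-scale-cts} (which transfers these to $\cR_{\geq r\rho_2}(T_j) \cap \partial B_r(\bOh)$ for $j$ large, using $\rho_2 = \tfrac{1}{2}\rho_1 < \rho_1$), for both $r = 1$ and $r = A_0$. Thus only the distance inequality is violated: $D_j > A_0^{1-\lambda} d_j$, where $d_j, D_j$ denote the distances on the left and right sides of the desired estimate.

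The case $d_j = 0$ is vacuous: Lemma \ref{lemm:disjoint.or.cross} would force $T_j = \pm T_j'$, hence $D_j = 0$ as well. So $d_j > 0$, and Allard's theorem together with smooth convergence $T_j, T_j' \to \cC$ on compact subsets of $\reg \cC$ give $d_j \to 0$. On each compact subset of $\reg \cC$, $T_j$ and $(\pm) T_j'$ are normal graphs $u_j, u_j'$ over $\cC$ with $u_j, u_j' \to 0$ in $C^k$ for every $k$; by the non-smooth-crossing and the connectedness of $\reg \cC$ (Lemma \ref{lemm:reg.connected}), $v_j := u_j - u_j'$ has a global sign on $\reg \cC$, say $v_j \geq 0$. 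Set $m_j := \inf_{\cR_{\geq (1+\delta)\rho_2}(\cC) \cap \partial B_1(\bOh)} v_j$ for a small fixed $\delta > 0$; by the Harnack inequality on $\reg \cC$, the graphical reduction, and Lemma \ref{lemm:reg-scale-cts}, $m_j$ and $d_j$ are comparable up to a constant depending only on $n$ and $\delta$. Normalizing $\tilde v_j := v_j / m_j$, standard Schauder estimates for the linearized minimal surface equation on $\reg \cC$ and the strong maximum principle yield a $C^2_{\text{loc}}(\reg \cC)$ subsequential limit $v$ that is a positive Jacobi field on $\reg \cC$.

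Applying Corollary \ref{coro:decay-est-Jacobi} to $v$ at scale $r = A_0$ (using $\rho_1 > \rho_2$, hence $\cR_{\geq A_0 \rho_1}(\cC) \subset \cR_{\geq A_0 \rho_2}(\cC)$) gives a decay factor of $H A_0^{-\kappa_n^*}$ for $\inf v$ between $\cR_{\geq \rho_1}(\cC) \cap \partial B_1$ and $\cR_{\geq A_0 \rho_1}(\cC) \cap \partial B_{A_0}$. Passing this back to $v_j$ with $(1+o(1))$ multiplicative error (from $\tilde v_j \to v$ smoothly on the relevant compact sets) and translating to the supports of $T_j, T_j'$ via the graphical reduction---using Lemma \ref{lemm:reg-scale-cts} to locate graphical points of $T_j$ in $\cR_{\geq A_0 \rho_2}(T_j) \cap \partial B_{A_0}$ near each point of $\cR_{\geq A_0 \rho_1}(\cC) \cap \partial B_{A_0}$---yields
\[
	D_j \leq C H A_0^{-\kappa_n^*} d_j
\]
for $j$ large, where $C = C(n)$. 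Since the hypothesis $\lambda < \kappa_n^* + 1$ is exactly $-\kappa_n^* < 1 - \lambda$, choosing $A_0 = A_0(n, \lambda)$ sufficiently large forces $CH A_0^{-\kappa_n^*} < A_0^{1-\lambda}$, contradicting $D_j > A_0^{1-\lambda} d_j$. The main technical obstacle, in my view, is cleanly executing the graphical reduction: arranging uniform-in-$j$ normal graphs near $\sing \cC$ and slightly-enlarged regularity-scale thresholds $(1+\delta)\rho_2$ so that $m_j$ is genuinely comparable to $d_j$ and the limiting $v$ is strictly positive (not merely nonnegative). The specific choice $\rho_2 = \tfrac{1}{2}\rho_1$ is precisely what supplies the necessary slack on both sides for Lemma \ref{lemm:reg-scale-cts} to upgrade cone regularity-scale bounds to minimizer regularity-scale bounds.
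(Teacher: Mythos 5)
Your proposal is correct and follows essentially the same route as the paper: a compactness--contradiction argument in which Proposition \ref{prop:dens-drop-sing} forces $T_j, T_j'$ to converge to a common nonflat minimizing cone, Proposition \ref{prop:Harnack-cone} and Lemma \ref{lemm:reg-scale-cts} (via the slack $\rho_2 = \tfrac12\rho_1$) give the nonemptiness claims, and the normalized graphical difference delinearizes to a positive Jacobi field whose decay, via Corollary \ref{coro:decay-est-Jacobi} and a sufficiently large choice of $A_0$ exploiting $\lambda < \kappa_n^* + 1$, contradicts the failure of the distance estimate. Your extra care with the $d_j = 0$ case and the comparability of the normalizer $m_j$ with $d_j$ are details the paper's proof leaves implicit, but they do not change the argument.
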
 
\begin{proof}
Choose $A_0=A_0(n,\lambda)$ sufficiently large so that 
\begin{equation} \label{eq:coro-est-A}
	H^2 A_0^{-\kappa_n^*} \leq \tfrac12 A_0^{1-\lambda},
\end{equation}
with $H$, $\kappa_n^*$ as in Corollary \ref{coro:decay-est-Jacobi}. 

We argue by contradiction. Assume that $T_j$ and $T'_j$ are as above with $\eta = j^{-1}$, and
\begin{equation}\label{eq:density-assumption}
 \min\{\Theta_{T_j}(\bOh),\Theta_{T'_j}(\bx_j')\} \geq \max\{ \Theta_n^*, \Theta_{T_j}(\bOh,1) \} - j^{-1}
\end{equation}
with $\bx_j' \in B_1(\bOh)$, but either
\begin{itemize}
 \item[(i)] $\cR_{\geq \rho_2}(T_j) \cap \partial B_1(\bOh) = \emptyset$ or $\cR_{\geq A_0 \rho_2}(T_j) \cap \partial B_{A_0}(\bOh) = \emptyset$; or
 \item[(ii)] the desired distance estimate fails, i.e.
 \begin{equation} \label{eq:sep-est.fail}
 	A_0^{-\lambda} \cdot d(\cR_{\geq \rho_2}(T_j) \cap \partial B_1(\bOh), \spt T_j') < d(\cR_{\geq A_0\rho_2}(T_j) \cap \partial B_{A_0}(\bOh), \spt T_j') / A_0.
\end{equation}
\end{itemize}

Arguing as in the start of the proof of Proposition \ref{prop:dens-drop-sing}, we know $T_j$, $T_j'$ are minimizing  boundaries in $B_j(\bOh)$. Then assumption \eqref{eq:density-assumption} and Proposition \ref{prop:dens-drop-sing} taken together imply that after passing to a subsequence (not relabeled), $T_j$ and $T_j'$ converge to the same minimizing cone $\cC$ with multiplicity one (after perhaps swapping orientations). By Lemma \ref{lemm:reg-scale-cts}, Proposition \ref{prop:Harnack-cone}, and $\rho_2 = \tfrac12 \rho_1 < \rho_1$, we find that (i) cannot hold for infinitely many $j$. 

So let's assume only (ii) holds. Because $T_j$, $T_j'$ converge locally smoothly to $\cC$ away from $\sing \cC$, and do not cross smoothly, we can take the difference of the corresponding graphical functions over $\cC$ and use the standard Harnack inequality and the connectedness of $\reg \cC$ (see Lemma \ref{lemm:reg.connected}) to define a positive Jacobi field $u$ on $\reg \cC$. Then (ii) rescales to give, in the limit as $j \to \infty$, 
\begin{equation} \label{eq:sep-est.bad.jf}
	A_0^{1-\lambda} \inf_{\cR_{\geq \rho_2}(\cC) \cap \partial B_1(\bOh)} u \leq \sup_{\cR_{\geq A_0\rho_2}(\cC)\cap\partial B_{A_0}(\bOh)} u
\end{equation}
This contradicts Corollary \ref{coro:decay-est-Jacobi} (applied with $r = A_0$) in view of \eqref{eq:coro-est-A}.
\end{proof}

\begin{remark} \label{rema:sep-est}
	Using straightforward modifications to the proof of Lemma \ref{lemm:sep-est}, we may assume $B_{\eta^{-1}}(\bOh)$ is endowed with a background metric $g$ that is $\eta$-close in $C^2$ to being Euclidean. Indeed, in our sequence of counterexamples we can also keep track of metrics $g_j$ on $B_j(\bOh)$ that are $j^{-1}$-close to Euclidean in $C^2$. In the limit, the $g_j$ converge in $C^2$ to a Euclidean metric on $\RR^{n+1}$. As before, the approximate monotonicity formula guarantees that we still have upper-semicontinuity of densities in the limit as $j \to \infty$, so \eqref{eq:density-assumption} passes to the limit. Likewise, locally in the regular part we have control of the coefficients of the Jacobi field equation under the $C^2$ convergence of $g_j$ to a Euclidean metric, so we can still obtain \eqref{eq:sep-est.bad.jf} in the $\RR^{n+1}$ blow-up limit, leading to the same contradiction.
\end{remark}

\subsection{High-density packing estimate}

We arrive at our main technical result. Below, it will be convenient to denote for integral currents $T$
\begin{equation} \label{eq:calD}
	\cD(T) := \sup \{ \Theta_{T}(\bx) : \bx \in \sing T \},
\end{equation}
with the convention that $\sup \emptyset = -\infty$.

\begin{theorem}[Discrete high-density packing estimate] \label{theo:high.density.packing}
	Let $n+1 \in \{ 8, 9, 10 \}$. There exists $\eta = \eta(n) \in (0, 1)$ with the following property:
	
	Suppose $(\breve N, g)$ is an $(n+1)$-dimensional Riemannian manifold without boundary. Let $\Omega$ be a compact subdomain of $\breve N$ with smooth boundary $\partial \Omega \neq \emptyset$, and let $\cT \subset \II^1_n(\breve N)$ be a set of minimizers that are homologous in $(\Omega, \partial \Omega)$ and where every $T \in \cT$ has the following properties:
	\begin{enumerate}
		\item[(a)] $\partial T = \llbracket \Gamma(T) \rrbracket$ for a nonempty, smooth, two-sided $\Gamma(T) \subset \partial \Omega$, 
		\item[(b)] $\spt T \subset \Omega$ is connected, and $\spt T \cap \partial \Omega = \Gamma(T)$ smoothly and transversely.
	\end{enumerate}
	Let $m \in \NN$, $m \geq 2$, $\theta \in (0, 1)$, $T \in \cT$. Take $D>0$ smaller than the normal injectivity radius of $\Gamma(T)\subset \partial\Omega$. There is $\sigma = \sigma(m,\theta,D,T)>0$ and for any $\nu>0$ there is $K_0 = K_0(m, \theta, D, T, \nu)>0$ so that the following holds. Consider any list
	\[ T =: T_0, T_1, \ldots, T_K \in \cT \]
	with $K \geq K_0$, satisfying
	\begin{enumerate}
		\item[(c)] $\Gamma(T_k) = \Graph_{\Gamma(T)} h_k$ for a smooth $h_k : \Gamma(T) \to \RR$ satisfying $\Vert h_k \Vert_{C^{m,\theta}(\Gamma)} \leq D$, where we're identifying the normal line bundle of $\Gamma(T)$ with $\Gamma \times \RR$ and giving $\Gamma(T_k)$ the graph orientation,
		\item[(d)] $\max_{k = 1, \ldots, K} \dist_{\cF,\Omega}(T, T_k) \leq \sigma$,\footnote{Here, $\dist_{\cF, \Omega}$ is the flat distance in $\Omega$, which is denoted $\cF_\Omega$ in \cite[4.1.24]{Federer:GMT}.}
		\item[(e)] $\min_{k = 1, \ldots, K} \inf_{\Gamma(T)} (h_{k} - h_{k-1}) \geq \nu K^{-1}$,
	\end{enumerate}
	we must have 
	\[ \min_{k = 1, \ldots, K} \cD(T_k) \leq \cD(T) - \eta. \]
\end{theorem}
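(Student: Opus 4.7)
The plan is to argue by contradiction, following the ``iterative improvement of the maximum density'' strategy outlined in the introduction. Suppose the conclusion fails: there exist arbitrarily long lists $T = T_0, T_1, \ldots, T_K \in \cT$ satisfying (c)--(e) with $\cD(T_k) > \cD(T) - \eta$ for every $k \in \{1,\ldots,K\}$. For each such $k$ pick a witness $\bx_k \in \sing T_k$ with $\Theta_{T_k}(\bx_k) > \cD(T) - \eta$. By (e), the boundary graphs $\Gamma(T_k)$ are pairwise disjoint with strictly monotone heights, so Lemma \ref{lemm:disjoint}, applied pairwise via the homology assumption together with (a), (b), forces the supports $\spt T_k$ to be pairwise disjoint.

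Next, the flat closeness (d) together with compactness of $\bar{\Omega}$ lets us extract a subsequence with $\bx_k \to \bx_\ast \in \spt T$; upper-semicontinuity of density then gives $\Theta_T(\bx_\ast) \geq \cD(T) - \eta$. Choosing $\eta = \eta(n)$ smaller than a purely dimensional gap that separates $\cD(T)$ from lower density values (using $\cD(T) \geq \Theta_n^\ast > 1$), we deduce $\bx_\ast \in \sing T$ with $\Theta_T(\bx_\ast) = \cD(T)$. Fix a tangent cone $\cC$ to $T$ at $\bx_\ast$, which is necessarily nonflat, and choose a small scale $r_0 > 0$ at which $(\Dil_{\bx_\ast, r_0^{-1}})_\# T$ is uniformly close to $\cC$ on a ball of large radius. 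Apply Lemma \ref{lemm:cone-split-pre} (with Remark \ref{rema:cone-split-pre} absorbing the ambient metric into an almost-Euclidean one after rescaling) to the rescaled $T$ and each rescaled $T_k$: this forces every $\bx_k$ sufficiently close to $\bx_\ast$ to lie within a prescribed neighborhood of the affine $(\leq n{-}7)$-dimensional subspace $\bx_\ast + r_0\cdot\spine\cC$.

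The technical core is a superlinear Hölder bound between the spatial location and the boundary height. Rescaling around $\bx_j$ and applying Lemma \ref{lemm:sep-est} (with Remark \ref{rema:sep-est}) to the pair $(T_j, T_k)$, one iterates the distance decay between the ``singularity-coalescence'' scale $R_{jk} := |\bx_k - \bx_j|$ and the unit (boundary) scale. Combined with the trivial bound $d(R_{jk}) \lesssim R_{jk}$ forced by the fact that $\bx_k \in \spt T_k$ lies within $\partial B_{R_{jk}}(\bx_j)$, and the fact that at the boundary scale the normalized separation is controlled by $h_k - h_j$, this yields for any $\lambda < \kappa_n^\ast + 1$ the estimate
\[ h_k - h_j \leq C\,|\bx_k - \bx_j|^{\lambda}, \]
with $C$ depending only on $\Omega$ and the ambient geometry. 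Combined with hypothesis (e), namely $h_k - h_j \geq \nu(k-j)/K$, this gives $|\bx_k - \bx_j| \geq c\,(\nu(k-j)/K)^{1/\lambda}$.

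To finish, the points $\{\bx_k\}_{k=0}^K$ pack a bounded region of a $(\leq n{-}7)$-dimensional affine subspace with consecutive separation $\gtrsim (\nu/K)^{1/\lambda}$, whence $K+1 \leq C\,\nu^{-(n-7)/\lambda}\,K^{(n-7)/\lambda}$; this is contradictory as $K\to\infty$ provided $\lambda > n-7$. For $n=8$ and $n=9$ one can take such a $\lambda < \kappa_n^\ast + 1$ directly, since $\kappa_8^\ast + 1 \approx 2.58 > 1$ and $\kappa_9^\ast + 1 \approx 2.44 > 2$. The \emph{main obstacle}, and the delicate part of the argument, is the borderline case $n=10$, where $n-7 = 3$ exceeds $\kappa_{10}^\ast + 1 \approx 2.35$: closing the gap there requires exploiting the forced product structure $\cC = \cC_0 \times \RR^{n-7}$ of cones with maximal-dimensional spines (reducing the Jacobi-field analysis to a $7$-dimensional base, for which $\kappa_7^\ast = 2$ yields an effective $\lambda$ approaching $3$), together with a refined multi-scale packing or iterated cone-splitting to upgrade the borderline estimate to a strict one.
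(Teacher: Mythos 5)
Your overall strategy is the paper's: disjointness of supports via Lemma \ref{lemm:disjoint}, cone-splitting (Lemma \ref{lemm:cone-split-pre}) to cluster near-top-density singular points around an $(n-7)$-dimensional spine, the iterated separation estimate (Lemma \ref{lemm:sep-est}) to relate index gaps to spatial distances superlinearly, and a packing count. But there is a genuine gap in the packing step. A \emph{single} application of Lemma \ref{lemm:cone-split-pre}, at one accumulation point $\bx_\ast$ and one scale $r_0$, only places the witnesses $\bx_k$ in a $\gamma r_0$-neighborhood of an $(n-7)$-plane; it does not confine them to the plane. Once $K$ is large enough that your guaranteed separation $c(\nu/K)^{1/\lambda}$ drops below $\gamma r_0$, the count $K+1 \lesssim K^{(n-7)/\lambda}$ is no longer valid: at scales below $\gamma r_0$ the neighborhood is genuinely $(n+1)$-dimensional, and the honest packing bound degenerates to $K+1 \lesssim K^{(n+1)/\lambda}$, which is vacuous since $\lambda < \kappa_n^*+1 < 3$. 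This is exactly why the paper runs an \emph{iterated} Vitali covering of the full high-density set $\cS_\ell$ at scales $\gamma^m$ (Claim \ref{clai:packing.covering.cardinality}), re-applying the cone-splitting lemma inside every ball at every scale to keep the covering cardinality at $\lesssim (C_0\gamma^{7-n})^m$, and then closes the count by combining this with the superlinear estimate at scale $\gamma^m$ (Claim \ref{clai:packing.superlinear}) as $m \to \infty$. Two related points: localizing all $K+1$ indices near a single $\bx_\ast$ is not what a subsequence extraction gives you (the paper covers the whole set so that every index is captured, cf.\ \eqref{eq:packing.all.covered}); and your appeal to a ``purely dimensional gap'' separating $\cD(T)$ from lower densities is not a known fact. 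What is actually needed to invoke Lemmas \ref{lemm:cone-split-pre} and \ref{lemm:sep-est} at every scale is the hypothesis $\Theta \geq \max\{\Theta_n^*, \Theta(\cdot,1)\} - 3\eta$, which the paper supplies via the compactness-based uniform pinching $\Theta_{T_{\ell,k}}(\bx, r_0) < \cD(T) + 2\eta$ of Claim \ref{clai:packing.density.uniform.O1.pinching}; your sketch omits this.

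Separately, your dimension count at the end is off by one and manufactures a nonexistent obstacle. The theorem concerns ambient dimensions $n+1 \in \{8,9,10\}$, i.e., hypersurface dimensions $n \in \{7,8,9\}$, for which $n-7 \in \{0,1,2\}$ while $\kappa_n^*+1 \in \{3, \approx 2.58, \approx 2.44\}$; hence $\lambda_0 \in (n-7, \kappa_n^*+1)$ exists in every case covered by the statement, and the argument closes directly --- this is precisely the choice \eqref{eq:packing.lambda0}. The case you single out as the ``main obstacle,'' $n=10$ with $n-7 = 3 > \kappa_{10}^*+1$, is ambient dimension $11$ and lies outside the theorem; it is the very obstruction that limits the result to ambient dimension $10$, and the machinery you gesture at to overcome it (product structure of maximal-spine cones, an ``effective $\lambda$ approaching $3$,'' a refined multi-scale packing) is speculative and not a proof. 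As written, your proposal therefore both leaves one of its claimed cases unproven and fails to observe that all cases actually in the statement are already within reach of the outlined estimates.
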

\begin{proof}
Given Remark \ref{rema:kappa.n} and $n+1 \leq 10$, we can fix\footnote{This is the source of the dimensional restriction in our theorems.}
\begin{equation} \label{eq:packing.lambda0}
	\lambda_0 \in (2, \kappa_n^*+1) \text{ so that } 7-n+\lambda_0 > 0.
\end{equation}
With $\lambda_0$ fixed, let $\eta_1 = \eta_1(n)$, $A_0 = A_0(n)$ be as in Lemma \ref{lemm:sep-est}. Then let $C_0 = C_0(n)$ be such that the following property holds for any ball $B_1(\bOh)$ endowed with a metric that is $\eta_1$-close in $C^2$ to being Euclidean: if $\Pi$ is any subspace of $\RR^{n+1}$ with $\dim \Pi \leq n-7$, and $\{ B_\gamma(\by_j) \}_{j=1}^Q$ is a radius $\gamma \in (0, \tfrac12)$ Vitali cover of the $\gamma$-neighborhood $U_\gamma(\Pi) \cap B_1(\bOh)$, i.e., a cover with pairwise disjoint $B_{\gamma/5}(\by_j)$'s, then:
\begin{equation} \label{eq:packing.Vitali}
	Q \leq C_0 \gamma^{7-n}.
\end{equation}
This follows from elementary packing estimates. Now set
\begin{equation} \label{eq:packing.gamma}
	\gamma := A_0^{-q}
\end{equation}
where $q = q(n)$ is large enough that
\begin{equation} \label{eq:packing.gamma.constraint}
	2 C_0 \gamma^{7-n+\lambda_0} < 1.
\end{equation}
With these choices, let $\eta_0$ be as in Lemma \ref{lemm:cone-split-pre} and let $\eta \in (0, \min \{ \eta_0, \eta_1 \})$ be arbitrary.

Subject to these choices, fix $T\in \cT$. Note that by Allard's regularity theorem there is nothing to prove if $\sing T = \emptyset$, so let us assume $\sing T \neq \emptyset$. We define
\[ \delta_0 := \tfrac12 \inf \{ d(\bx, \partial \Omega) : \bx \in \sing T \}. \]
The Hardt--Simon boundary regularity theorem (Theorem \ref{theo:hardt-simon-boundary-regularity}) implies $\delta_0 > 0$. We now claim that it's possible to choose $\sigma>0$ so that if $T'\in \II_n(\breve N)$ is a minimizer homologous to $T$ in $(\Omega,\partial\Omega)$ with $\dist_{\cF,\Omega}(T,T') \leq \sigma$ and $\partial T'=\llbracket \Gamma(T')\rrbracket$ where $\Gamma(T')\subset \partial\Omega$ is the normal graph of $h \in C^{m,\theta}(\Gamma(T))$ with $\Vert h \Vert_{C^{m,\theta}(\Gamma)} \leq D$ then
\begin{equation} \label{eq:packing.density.limit.constraints}
	T' \in \II_n^1(\breve N)
\end{equation}
and  $\spt T'\subset \Omega$ is connected and intersects $\partial\Omega$ transversely,
\begin{equation} \label{eq:packing.density.sing.interior}
	\inf \{ d(\bx, \partial \Omega) : \bx \in \sing T' \} \geq \delta_0,
\end{equation}
\begin{equation} \label{eq:packing.density.uniform.pinching}
	\cD(T') < \cD(T) + \eta.
\end{equation}
The existence of $\sigma>0$ follows immediately. As $\sigma\to 0$ any such $T'$ converges weakly to $T$ and the boundary curves converge in $C^{m,\theta'}$. Thus, we can apply the Hardt--Simon boundary regularity theorem (Theorem \ref{theo:hardt-simon-boundary-regularity}), the Allard boundary regularity theorem, and upper semicontinuity of density under flat convergence to conclude. 

We now fix $\nu>0$ and show it's possible to find $K_0=K_0(m,\theta,D,T,\nu)$ with the given properties. If this were to fail, then there would exist a sequence $K_\ell \in \NN$ with $K_\ell \to \infty$ and for each $\ell=1,2,\dots$ and a family 
\[ T =: T_{\ell,0}, T_{\ell,1}, \ldots, T_{\ell,K_\ell} \in \cT \]
of  $K_\ell+1$ elements, so that $\Gamma(T_{\ell,k}) = \Graph_{\Gamma(T)} h_{\ell,k}$ with 
\begin{equation} \label{eq:packing.Tl.close}
	\max_{k = 1, \ldots, K_\ell} \dist_{\cF,\Omega}(T, T_{\ell,k}) \leq \sigma,
\end{equation}
\begin{equation} \label{eq:packing.hl.bounded}
	\max_{k = 1, \ldots, K_\ell} \Vert h_{\ell,k} \Vert_{C^{m,\theta}(\Gamma)} \leq D,
\end{equation}
\begin{equation} \label{eq:packing.hl.far}
	\min_{k = 1, \ldots, K_\ell} \inf_{\Gamma(T)} (h_{\ell,k} - h_{\ell,k-1}) \geq \nu K_\ell^{-1},
\end{equation}
\begin{equation} \label{eq:packing.density.pinched}
	\min_{k = 1, \ldots, K_\ell} \cD(T_{\ell,k}) > \cD(T) - \eta.
\end{equation}
Note that we automatically have:
\begin{equation} \label{eq:packing.disjoint.support}
	T_{\ell,0}, T_{\ell,1}, \ldots, T_{\ell,K_\ell} \text{ have pairwise disjoint supports}
\end{equation}
by virtue of Lemma \ref{lemm:disjoint} applied with $T := T_{\ell,k}$, $T' := T_{\ell,k'}$, $\Gamma := \Gamma(T_{\ell,k})$, $\Gamma' := \Gamma(T_{\ell,k'})$, for $0 \leq k < k' \leq K_\ell$, and with $\Sigma$ being the lateral region bounded by $\Gamma(T_{\ell,k}), \Gamma(T_{\ell,k'})$. 

\begin{claim} \label{clai:packing.density.uniform.O1.pinching}
	There exists $r_0 \in (0, \tfrac12 \delta_0)$ such that
	\begin{equation} \label{eq:packing.density.uniform.O1.pinching}
		\Theta_{T_{\ell,k}}(\bx, r) < \cD(T) + 2 \eta
	\end{equation}
	for all $r \in (0, r_0)$, $\ell \in \NN$, $k = 1, \ldots, K_\ell$, and $\bx \in \Omega$ with $d(\bx, \partial \Omega) \geq \tfrac12 \delta_0$.
\end{claim}
\begin{proof}[Proof of claim]
	Suppose \eqref{eq:packing.density.uniform.O1.pinching} failed with $r_i \to 0$. Then we would have
	\[ \Theta_{T_{\ell_i,k_i}}(\bx_i, r_i) \geq \cD(T) + 2 \eta. \]
	for $\bx_i \in \Omega$ with $d(\bx_i, \partial \Omega) \geq \tfrac12 \delta_0$, and $\ell_i \in \NN$, $k_i \in \{ 1, \ldots, K_{\ell_i} \}$. 
	
	By choice of $\sigma>0$ above, we can pass to a subsequence (not relabeled) along which $\bx_{\ell_i} \to \bx$ with $d(\bx, \partial \Omega) \geq \tfrac12 \delta_0$ and $T_{\ell_i,k_i} \rightharpoonup T'$ with $T'$ satisfying \eqref{eq:packing.density.limit.constraints}, \eqref{eq:packing.density.sing.interior}, \eqref{eq:packing.density.uniform.pinching}. It follows from the weak convergence and the approximate monotonicity formula on manifolds that
	\[ \Theta_{T'}(\bx, r) \geq \cD(T) + \tfrac32 \eta \] 
	for all small $r > 0$. But $\Theta_{T'}(\bx, r) \to \Theta_{T'}(\bx)$ as $r \to 0$, contradicting \eqref{eq:packing.density.uniform.pinching}.
\end{proof}

Define, for each $\ell \in \NN$,
\[ \cS_\ell := \cup_{k=0}^{K_\ell} \{ \bx \in \sing T_{\ell,k} : \Theta_{T_{\ell,k}}(\bx) \geq \cD(T) - \eta \}. \]

\begin{claim} \label{clai:packing.covering.cardinality}
	Fix $\ell \in \NN$. We can inductively define, for each $m = 0, 1, \ldots$, a covering
	\begin{equation} \label{eq:packing.covering}
		\cS_\ell \subset \cup_{j=1}^{Q_{\ell,m}} B_{\gamma^m}(\by_{\ell,m,j})
	\end{equation}
	such that $\by_{\ell,m,j} \in \cS_\ell$ and $Q_{\ell,m} \leq Q C_0^m \gamma^{m(7-n)}$, where $Q$ does not depend on $\ell$ or $m$.
\end{claim}
\begin{proof}[Proof of claim]
	Let $\operatorname{inj}_{(\breve N, g)}(\bx)$ denote the injectivity radius at $\bx \in (\breve N, g)$. Fix $m_0$ large enough that 
	\begin{equation} \label{eq:packing.covering.cardinality.m0.1}
		\gamma^{m_0} < \tfrac12 \min  \{r_0, \delta_0 \eta, \min_\Omega \operatorname{inj}_{(\breve N, g)}(\cdot) \},
	\end{equation}
	and that for every $\bx \in \Omega$, $m \geq m_0$,
	\begin{equation} \label{eq:packing.covering.cardinality.m0.2}
		(\Dil_{\bx, \gamma^m})_\# (B_{\gamma^m \eta^{-1} }(\bx), g) \text{ is $\eta$-close in $C^2$ to a Euclidean ball}.
	\end{equation}
	
	For $m = 0, 1, \ldots, m_0$, take the covering to be any Vitali covering of $\cS_\ell$ with balls of radius $\gamma^m$. The bound on $Q_{\ell,m}$ is a trivial packing estimate.
	
	Now for the inductive step, assuming we have \eqref{eq:packing.covering} for some $m\geq m_0+1$, it suffices to show for every ball $B_{\gamma^m}(\by_{\ell,m,j})$ as in \eqref{eq:packing.covering} that $\cS_\ell \cap B_{\gamma^m}(\by_{\ell,m,j})$ can be covered by $\leq C_0 \gamma^{7-n}$ balls of radius $\gamma^{m+1}$ and centered on points in $\cS_\ell$. 
	
	So fix any one such $B_{\gamma^m}(\by)$, where the indices have been suppressed. By definition,
	\[ \by \text{ is a center of a ball in the covering} \implies \by \in \sing T_{\ell,k} \]
	for some $k \in \{ 0, 1, \ldots, K_\ell \}$. Rescale $\breve N' := (\Dil_{\by,\gamma^m})_{\#} \breve N$ so that
	\[ (\Dil_{\by,\gamma^m})_{\#} B_{\gamma^m}(\by) = B_1(\bOh). \]
	It follows from \eqref{eq:packing.covering.cardinality.m0.1} that, in this rescaled setting, 
	\[ \tilde T := (\Dil_{\by,\gamma^m})_{\#} T_{\ell,k} \]
	has $\spt \partial \tilde T \cap B_{\eta^{-1}}(\bOh) = \emptyset$ as well as, by Claim \ref{clai:packing.density.uniform.O1.pinching} and \eqref{eq:packing.density.pinched},
	\begin{align*}
		\Theta_{\tilde T}(\bOh) 
			& = \Theta_{T_{\ell,k}}(\by) \\
			& \geq \cD(T) - \eta \\
			& \geq \Theta_{T_{\ell,k}}(\by, \gamma^m) - 3 \eta \\
			& = \Theta_{\tilde T}(\bOh, 1) - 3 \eta.
	\end{align*}
	 Let $\tilde T_c$ denote the restriction of $\tilde T \mres B_{\eta^{-1}}(\bOh)$ to the connected component of its support containing $\bOh$. Then:
	\[ \Theta_{\tilde T_c}(\bOh) = \Theta_{\tilde T}(\bOh) \geq \Theta_{\tilde T}(\bOh, 1) - 3 \eta \geq \Theta_{\tilde T_c}(\bOh, 1) - 3 \eta. \]
	 Now suppose $\by' \in \cS_\ell \cap B_{\gamma^m}(\by)$. Then 
	\[ \by' \in \cS_\ell \implies \by' \in \sing T_{\ell,k'} \]
	for some $k' \in \{ 0, 1, \ldots, K_\ell \}$. Setting
	\[ \tilde T' := (\Dil_{\by, \gamma^m})_{\#} T_{\ell,k'} \]
	we still have $\spt \partial \tilde T' \cap B_{\eta^{-1}}(\bOh) = \emptyset$ and, at $\bx' := \Dil_{\by,\gamma^m}(\by') \in B_1(\bOh)$:
	\begin{align*}
		\Theta_{\tilde T'}(\bx')
			& = \Theta_{T_{\ell,k'}}(\by') \\
			& \geq \cD(T) - \eta \\
			& \geq \Theta_{\tilde T}(\bOh, 1) - 3 \eta.
	\end{align*}
	 Again, let $\tilde T'_c$ denote the restriction of $\tilde T' \mres B_{\eta^{-1}}(\bOh)$ to the connected component of its support containing $\bx'$, so that
	\[ \Theta_{\tilde T'_c}(\bx') = \Theta_{\tilde T'}(\bx') \geq \Theta_{\tilde T}(\bOh, 1) - 3 \eta \geq \Theta_{\tilde T_c}(\bOh, 1) - 3 \eta. \]
	 Now \eqref{eq:packing.disjoint.support}, \eqref{eq:packing.Vitali}, \eqref{eq:packing.covering.cardinality.m0.1}, \eqref{eq:packing.covering.cardinality.m0.2}, and Lemma \ref{lemm:cone-split-pre} applied  to $\tilde T_c$, $\tilde T'_c$, guarantee that $\bx'$ is contained in the distance $< \gamma$ neighborhood of a fixed $\leq n-7$ dimensional subspace. Thus, after unrescaling, $\cS_\ell \cap B_{\gamma^m}(\by)$ can be covered by $\leq C_0 \gamma^{7-n}$ balls of radius $\gamma^{m+1}$ centered at points of $\cS_\ell$. 
\end{proof}

\begin{claim} \label{clai:packing.superlinear}
	For sufficiently large $m$ and $\ell$, if $B_{\gamma^{m}}(\by)$ is a ball in the $m$-th level covering of $\cS_\ell$ defined in Claim \ref{clai:packing.covering.cardinality}, $\by' \in \cS_\ell \cap B_{\gamma^m}(\by)$, $\by \in \sing T_{\ell,k}$, $\by' \in \sing T_{\ell,k'}$, then
	\begin{equation} \label{eq:packing.superlinear}
		|k-k'| < 2^m \gamma^{m\lambda_0} K_\ell.
	\end{equation}
\end{claim}
\begin{proof}[Proof of claim]
	Assume, for contradiction, that \eqref{eq:packing.superlinear} fails as $m_i \to \infty$, $\ell_i \to \infty$. Let
	\[ \by_i \in \cS_{\ell_i}, \; \by_i \in \sing T_{\ell_i,k_i}, \; \by_i' \in \cS_\ell \cap B_{\gamma^{m_i}}(\by_i), \; \by_i' \in \sing T_{\ell_i,k_i'} \]
	be points violating \eqref{eq:packing.superlinear}, i.e., with
	\begin{equation} \label{eq:packing.superlinear.false}
		|k_i-k_i'| \geq 2^{m_i} \gamma^{m_i \lambda_0} K_{\ell_i}.
	\end{equation}
	Note that $d_g(\by_i, \by_i') \to 0$. After passing to a subsequence (not relabeled) we have $T_{\ell_i,k_i}$, $T_{\ell_i,k_i'} \rightharpoonup T'$ with $T'$ satisfying \eqref{eq:packing.density.limit.constraints}, \eqref{eq:packing.density.sing.interior}, \eqref{eq:packing.density.uniform.pinching}. By Lemma \ref{lemm:reg-scale-cts} and Allard's boundary regularity theorem $\spt T_{\ell_i,k_i} \setminus \sing T_{\ell_i,k_i}$ and $\spt T_{\ell_i,k_i'} \setminus \sing T_{\ell_i,k_i}$ become graphical over $\spt T' \setminus \sing T'$ locally in compact sets as $i \to \infty$.\footnote{Technically, we need to bend our Fermi coordinates near $\partial \Omega$, but this doesn't affect the argument so we omit the standard details.} Let $h'_{\ell_i,k_i}$, $h'_{\ell_i,k_i'}$ be the corresponding height functions defined on subsets of $\spt T' \setminus \sing T'$ that exhaust this set as $i \to \infty$. By \eqref{eq:packing.disjoint.support}, we can arrange $h'_{\ell_i,k_i} < h'_{\ell_i,k_i'}$ on their common domain after perhaps swapping sheets. By \eqref{eq:packing.hl.far}, \eqref{eq:packing.density.limit.constraints}, \eqref{eq:packing.superlinear.false}, we have for a uniform $c > 0$ that
	\begin{equation} \label{eq:packing.superlinear.bdry}
		\min_{\Gamma(T')} (h'_{\ell_i,k_i'} - h'_{\ell_i,k_i}) \geq c \nu (k_i' - k_i) K_{\ell_i}^{-1} \geq c \nu 2^{m_i} \gamma^{m_i\lambda_0}.
	\end{equation}
	Let $m_0$ again be chosen to satisfy both \eqref{eq:packing.covering.cardinality.m0.1}, \eqref{eq:packing.covering.cardinality.m0.2}. By  \eqref{eq:packing.disjoint.support}, \eqref{eq:packing.covering.cardinality.m0.1}, \eqref{eq:packing.covering.cardinality.m0.2}, we can apply Lemma \ref{lemm:sep-est} iteratively until scale $B_{\gamma^{m_0}}(\by_i)$ (again, restricting to connected components in each scale as in the proof of Claim \ref{clai:packing.covering.cardinality}, and noting that the distance decrease by the appearance of a new connected component when going to a bigger scale gives a distance inequality in our favor), obtaining
	\begin{equation} \label{eq:packing.superlinear.intr}
		d(\cR_{\geq \rho_2 \gamma^{m_0}}(T_{\ell_i,k_i}) \cap \partial B_{\gamma^{m_0}}(\by_i), \spt T_{\ell_i,k_i'}) \leq 2 \gamma^{(m_i-m_0) \lambda_0}.
	\end{equation}
	It follows from $2^{m_i} \to \infty$, Lemma \ref{lemm:reg-scale-cts}, \eqref{eq:packing.Tl.close}, \eqref{eq:packing.superlinear.bdry}, \eqref{eq:packing.superlinear.intr}, the Harnack inequality and Schauder theory (the regular kind) up to $\Gamma(T')$ (where our parametrization is regular by \eqref{eq:packing.density.limit.constraints}) that a renormalization of $h'_{\ell_i,k_i'} - h'_{\ell_i,k_i}$ yields a nonnegative Jacobi field on $\spt T' \setminus \sing T'$ that is positive on part of $\Gamma(T')$, and thus on part of $\reg T'$, but also has an interior vanishing point in $\reg T'$. This violates the strict maximum principle since $\reg T'$ is connected (by Lemma \ref{lemm:reg.connected} applied away $\Gamma(T')$). Thus, \eqref{eq:packing.superlinear.false} is false, so \eqref{eq:packing.superlinear} holds whenever $m$ and $\ell$ are sufficiently large. This completes the proof of Claim \ref{clai:packing.superlinear}.
\end{proof}

To complete the proof of Theorem \ref{theo:high.density.packing}, pass to a subsequence $\{ \ell_m \}$ of $\{ \ell_i \}$ which diverges quickly enough as $m \to \infty$ so that
\begin{equation} \label{eq:packing.ell.m}
	\tfrac12 K_{\ell_m} + 1 > Q (C_0 \gamma^{7-n})^m,
\end{equation}
where $C_0$ is as in \eqref{eq:packing.Vitali} and $Q$ is as in Claim \ref{clai:packing.covering.cardinality}. 

For each ball $B_{\gamma^{m}}(\by_{\ell_m,m,j})$, $j \in \{ 1, \ldots, Q_{\ell_m,m} \}$, in the cover of Claim \ref{clai:packing.covering.cardinality}, let
\[ \cK_{m,j} := \{ k \in \{ 0, 1, \ldots, K_{\ell_m} \} : \sing T_{\ell_m,k} \cap \cS_{\ell_m} \cap B_{\gamma^{m}}(\by_{\ell_m,m,j}) \neq \emptyset \}. \]
It follows from \eqref{eq:packing.density.pinched} that
\begin{equation} \label{eq:packing.all.covered}
	\cup_{j=1}^{Q_{\ell_m,m}} \cK_{m,j} = \{ 0, 1, \ldots, K_{\ell_m} \}.
\end{equation}
It also follows from Claim \ref{clai:packing.superlinear} that, for all sufficiently large $m$, 
\begin{equation} \label{eq:packing.nearby.covered}
	\# \cK_{m,j} \leq 1 + 2 \lfloor 2^{m} \gamma^{m \lambda_0} K_{\ell_m} \rfloor \leq 1 + 2^{m+1} \gamma^{m \lambda_0} K_{\ell_m},
\end{equation}
where $\#$ above denotes the cardinality of the set. Together, \eqref{eq:packing.all.covered} and \eqref{eq:packing.nearby.covered} imply
\[ K_{\ell_m} + 1 \leq (1 + 2 ^{m+1} \gamma^{m \lambda_0} K_{\ell_m}) Q_{\ell_m,m}. \]
From Claim \ref{clai:packing.covering.cardinality} we deduce
\[ K_{\ell_m} + 1 \leq ((C_0 \gamma^{7-n})^m + 2 (2 C_0 \gamma^{7-n+\lambda_0})^m K_{\ell_m}) Q. \]
Recalling \eqref{eq:packing.gamma.constraint}, we can take $m$ large so that $2 (2 C_0 \gamma^{7-n+\lambda_0})^m Q < \frac 12$, and thus
\[ \tfrac12 K_{\ell_m} + 1 \leq (C_0 \gamma^{7-n})^m Q. \]
This violates \eqref{eq:packing.ell.m} and completes our proof of Theorem \ref{theo:high.density.packing}.
\end{proof}

\section{Theorem \ref{theo:main.plateau}: Generic smoothness of Plateau solutions} \label{sec:main.plateau}

We will prove the following result that implies Theorem \ref{theo:main.plateau}.

\begin{theorem} \label{theo:main.plateau.baire}
	Let $\Gamma^\circ$ be a smooth, closed, oriented, $(n-1)$-dimensional submanifold of $\RR^{n+1}$ with $n+1 \in \{ 8, 9, 10 \}$. Denote by $\cG^\circ$ a small enough neighborhood of the trivial section of the normal bundle of $\Gamma^\circ$ whose elements can be identified with submanifolds near $\Gamma^\circ$. Then, there exists an open and dense subset $\cG^\circ_* \subset \cG^\circ$ with the property that for every $\Gamma \in \cG^\circ_*$ there exists a minimizing $T \in \II^1_n(\RR^{n+1})$ with $\partial T = \llbracket \Gamma \rrbracket$ that in fact has the form $T = \llbracket M \rrbracket$ for a smooth, compact, oriented hypersurface $M$ with $\partial M = \Gamma$.
\end{theorem}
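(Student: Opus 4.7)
The plan is to define $\cG^\circ_*\subset\cG^\circ$ as the set of those $\Gamma$ for which every minimizing $T\in\II^1_n(\RR^{n+1})$ with $\partial T=\llbracket\Gamma\rrbracket$ is smooth; any such $T$ equals $\llbracket M\rrbracket$ for a smooth compact oriented hypersurface $M$ with $\partial M = \Gamma$, so it suffices to show $\cG^\circ_*$ is open and dense. For openness, suppose $\Gamma_j\to\Gamma$ in $C^\infty$ with $\Gamma_j\notin\cG^\circ_*$, witnessed by minimizers $T_j$ and $\bx_j\in\sing T_j$; the uniform Hardt--Simon boundary regularity of Appendix~\ref{app:hardt.simon.boundary} provides $\dist(\bx_j,\Gamma_j)\geq\delta>0$, so after passing to a subsequence $T_j\rightharpoonup T$ with $\partial T=\llbracket\Gamma\rrbracket$ and $\bx_j\to\bx$ stays interior, and upper-semicontinuity of density yields $\Theta_T(\bx)\geq\Theta_n^*>1$, giving $\bx\in\sing T$ and $\Gamma\notin\cG^\circ_*$.

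For density, fix $\Gamma\in\cG^\circ$ and a $C^\infty$-neighborhood $\cV$, and set
\[
	f(\Gamma') := \sup\bigl\{\,\cD(T') : T' \text{ minimizing},\ \partial T' = \llbracket \Gamma' \rrbracket\,\bigr\}, \quad \sup\emptyset := -\infty.
\]
Standard compactness arguments (plus the uniform Hardt--Simon boundary distance to control density by mass) show $f$ is finite, attained, upper-semicontinuous, and equals $-\infty$ exactly on $\cG^\circ_*$. The argument reduces to the following iterative claim, with $\eta=\eta(n)$ as in Theorem~\ref{theo:high.density.packing}: \emph{for every $\Gamma'$ with $f(\Gamma')\geq\Theta_n^*$ and every $C^\infty$-neighborhood $\cW$ of $\Gamma'$, there is $\Gamma''\in\cW$ with $f(\Gamma'')\leq f(\Gamma')-\eta/2$.} Iterating this at most $\lceil 2f(\Gamma)/\eta\rceil$ times with geometrically shrinking perturbations produces some $\Gamma^{(I)}\in\cV$ with $f(\Gamma^{(I)})<\Theta_n^*$, forcing $\Gamma^{(I)}\in\cG^\circ_*$.

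To prove the iterative claim, fix $\phi\in C^\infty(\Gamma',(0,\infty))$ and form the monotone normal-graph family $\Gamma'_t:=\Graph_{\Gamma'}(t\phi)$, $t\in[-\epsilon,\epsilon]$, with $\epsilon$ small so each $\Gamma'_t\in\cW$; select a minimizer $T_t$ attaining $\cD(T_t)=f(\Gamma'_t)$. Choose a compact domain $\Omega\supset\bigcup_t\spt T_t$ whose boundary carries the $\Gamma'_t$'s as a monotone sweepout of an annular strip $\Sigma\subset\partial\Omega$; then Lemma~\ref{lemm:disjoint} forces $\spt T_t\cap\spt T_{t'}=\emptyset$ for $t\neq t'$. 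Suppose, for contradiction, that $f(\Gamma'_t)>f(\Gamma')-\eta/2$ for all such $t$, and discretize $t_k=-\epsilon+2k\epsilon/K$. Then $(T_{t_k})_{k=0}^K$ satisfies conditions (a), (b), (c), (e) of Theorem~\ref{theo:high.density.packing} by construction, but (d) --- $\sigma$-closeness of all $T_{t_k}$ to a central element $T^\sharp$ in flat norm --- will be the main obstacle. To overcome it, note that $\bigcup_{t\in[-\epsilon,\epsilon]}\{T_t\}$ is precompact in flat norm (standard mass bound plus Federer--Fleming compactness), and that the $\sigma$ produced by Theorem~\ref{theo:high.density.packing} can be taken uniform over this precompact family; pigeonholing a covering by finitely many flat-$\sigma/2$-balls, some such ball contains a fraction $\alpha>0$ of the $T_{t_k}$'s. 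Extract this sub-chain and recenter at its first element $T^\sharp=T_{t_{k_0}}$; by the triangle inequality the sub-chain is $\sigma$-close to $T^\sharp$, and it retains (a)--(c) and (e) with $\nu:=2\alpha\epsilon\min\phi$. Theorem~\ref{theo:high.density.packing} applied to the sub-chain yields some $T_{t_{k_j}}$ with $\cD(T_{t_{k_j}})\leq\cD(T^\sharp)-\eta$; upper-semicontinuity of $f$ gives $\cD(T^\sharp)=f(\Gamma'_{t_{k_0}})\leq f(\Gamma')+\eta/4$ for $\epsilon$ small, whence
\[
	f(\Gamma'_{t_{k_j}}) = \cD(T_{t_{k_j}}) \leq f(\Gamma') - \tfrac{3\eta}{4} < f(\Gamma') - \tfrac{\eta}{2},
\]
contradicting the assumption. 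The hardest parts of this plan are managing condition (d) via the compactness/pigeonhole step and configuring $\Omega$ so that Lemma~\ref{lemm:disjoint} applies uniformly along the whole family.
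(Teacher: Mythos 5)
Your proposal has genuine gaps, the most serious of which is that it never confronts multiplicity. The theorem asks for a mass-minimizer that is \emph{multiplicity one} and smooth, but a minimizer with $\partial T = \llbracket \Gamma \rrbracket$ need not lie in $\II_n^1(\RR^{n+1})$: by the Hardt--Simon boundary regularity (Theorem \ref{theo:hardt-simon-boundary-regularity}(ii)) the support can wrap around boundary components with multiplicities $m, m+1$, and this is stable under perturbation of $\Gamma$. Your set $\cG^\circ_*$ (``every minimizing $T \in \II_n^1$ is smooth'') is then satisfied vacuously while the theorem's conclusion fails, and your density function $f$ either ranges over higher-multiplicity minimizers --- to which Lemma \ref{lemm:disjoint} and Theorem \ref{theo:high.density.packing} do not apply, since both require $T \in \II_n^1$ with connected support --- or ignores them. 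The paper's induction over subsets $\cP$ of the components of $\Gamma$, together with the ``iterative improvement'' step that peels off sheets of the nested decomposition $T = \sum_i \llbracket M^{(i)} \rrbracket$ of Corollary \ref{coro:hardt-simon-decomposition}, exists precisely to reduce to the multiplicity-one, connected-support situation; nothing in your plan replaces it. (Connectedness is a related omission: for multi-component $\Gamma$ the minimizer's support is typically disconnected, so Theorem \ref{theo:high.density.packing} cannot be applied to it directly.) Your openness argument inherits the same problem: the weak limit $T$ of multiplicity-one minimizers need not be multiplicity one, and $\Theta_T(\bx) \geq \Theta_n^* > 1$ only forces $\bx \in \sing T$ when $T$ is multiplicity one near $\bx$; the paper's openness proof needs the extra conditions (C$_1$), (C$_3$), (C$_4$) on the limit to rule this out.

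The second gap is your treatment of condition (d). Theorem \ref{theo:high.density.packing} produces $\sigma$ and $K_0$ \emph{after} fixing the reference current $T = T_0$, and both genuinely depend on $T$ (through $\delta_0 = \tfrac12\dist(\sing T, \partial\Omega)$, the scale $r_0$ of Claim \ref{clai:packing.density.uniform.O1.pinching}, and the upper semicontinuity of $\cD$ at $T$ needed for \eqref{eq:packing.density.uniform.pinching}). Your pigeonhole step recenters at a current $T^\sharp = T_{t_{k_0}}$ that is only determined after the covering, so you need $\sigma$ and $K_0$ uniform over a flat-norm-precompact family of minimizers; the theorem does not assert this, the quantities are at best upper semicontinuous in $T$, and establishing the uniformity would itself require a nontrivial compactness argument (note also the circularity: $K_0$ depends on $T^\sharp$, which depends on $K$). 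The paper sidesteps all of this by first invoking Lemma \ref{lemm:main.plateau.uniqueness.shrinking} to make the central minimizer \emph{unique}; uniqueness forces $T'_t \rightharpoonup T'_0$ as $t \to 0$ (equation \eqref{eq:main.with.uniqueness.limit}), which delivers (d) for a single fixed reference current with no pigeonholing. Without a uniqueness step somewhere, your selection ``$T_t$ attaining $f(\Gamma'_t)$'' can also jump discontinuously in $t$, which is exactly what (d) is meant to preclude. The skeleton of your density argument (monotone normal-graph foliation, disjointness via Lemma \ref{lemm:disjoint}, density drop via Theorem \ref{theo:high.density.packing}, finite iteration against the Allard gap $\Theta_n^*$) matches the paper's Lemma \ref{lemm:main.plateau.main.lemma}, but the two issues above are load-bearing and unresolved.
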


The theorem will follow from the combination of two technical lemmas, which we state below but whose proofs we briefly postpone. 

\begin{lemma} \label{lemm:main.plateau.uniqueness.shrinking}
	Fix $\Gamma$ as in Theorem \ref{theo:main.plateau.baire} and assume that $T \in \II_n(\RR^{n+1})$ is minimizing with $\partial T = \llbracket \Gamma \rrbracket$ and $\spt T$ is connected.  Then, for every neighborhood $\cG$ of $\Gamma$, there exists a $\Gamma' \in \cG$ and a uniquely\footnote{Recall that $T'$ is uniquely minimizing if $\tilde T' \in \II_n(\RR^{n+1})$ has $\partial \tilde T'=\llbracket \Gamma'\rrbracket$ then $\MM(T') \leq \MM(\tilde T')$ with equality if and only if $\tilde T'=T'$.} minimizing $T' \in \II_n(\RR^{n+1})$ with $\partial T' = \llbracket \Gamma' \rrbracket$ and which can be guaranteed to have $\spt T' \subsetneq \spt T$ connected. 
	
	Our construction is such that $T' \in \II_n^1(\RR^{n+1})$ if and only if $T \in \II_n^1(\RR^{n+1})$. Likewise, if $\spt T$ is a smooth hypersurface with compact boundary, then our construction is such that $\spt T'$ is smooth and strictly stable under Dirichlet boundary conditions.\footnote{We mean this in the sense of the second varation being positive definite for normal variations that vanish along the boundary.}
\end{lemma}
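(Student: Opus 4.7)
The plan is to retract $\Gamma$ slightly inward along the boundary sheet of $\spt T$, subtract the resulting thin collar from $T$, and obtain uniqueness via a strong maximum principle / unique continuation argument.

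First I would construct $\Gamma'$, $\Sigma$, and $T'$ explicitly. By Hardt--Simon boundary regularity (Appendix~\ref{app:hardt.simon.boundary}) and the fact that $\partial T = \llbracket \Gamma \rrbracket$ is multiplicity one, the portion of $\spt T$ emanating from $\Gamma$ is a smooth multiplicity-one hypersurface-with-boundary $\Gamma$ in some open neighborhood of $\Gamma$. Let $\nu$ be a smooth vector field along $\Gamma$ that is tangent to this boundary sheet of $\spt T$ and points into it, and for small $t>0$ set $\Gamma' := \exp_\Gamma(t\nu)$. Then $\Gamma'$ is a $C^\infty$-small normal graph over $\Gamma$ in $\RR^{n+1}$ (so $\Gamma' \in \cG$ for $t$ small), is contained in $\reg T$, and cobounds with $\Gamma$ a smooth collar $\Sigma \subset \reg T$ satisfying $\partial \llbracket \Sigma \rrbracket = \llbracket \Gamma \rrbracket - \llbracket \Gamma' \rrbracket$, oriented consistently with $T$. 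Define $T' := T - \llbracket \Sigma \rrbracket$, so that $\partial T' = \llbracket \Gamma' \rrbracket$.

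Second I would verify the basic structural claims. Since $\Sigma$ is a multiplicity-one subset of $\reg T$ with orientation matching $T$, the subtraction is cancellative, so $\MM(T') = \MM(T) - \MM(\Sigma)$. For any competitor $S$ with $\partial S = \llbracket \Gamma' \rrbracket$, the chain
\[ \MM(T) \leq \MM(S + \llbracket \Sigma \rrbracket) \leq \MM(S) + \MM(\Sigma) \]
rearranges to $\MM(S) \geq \MM(T')$, so $T'$ is minimizing. The operation $T \mapsto T - \llbracket \Sigma \rrbracket$ preserves multiplicity-one (in either direction), and $\spt T' = (\spt T \setminus \Sigma) \cup \Gamma'$ is a proper subset of $\spt T$ that remains connected, since removing a boundary-collar from the connected manifold-with-boundary $\spt T$ does not disconnect it. In the smooth case ($\spt T$ smooth with compact boundary), $\spt T'$ is also smooth with boundary $\Gamma'$, and strict monotonicity of the first Dirichlet eigenvalue of the Jacobi operator under strict domain inclusion promotes the stability of $T$ (implied by its minimality) to \emph{strict} stability of $T'$ under Dirichlet boundary conditions.

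The main obstacle is the uniqueness of $T'$. Given any minimizer $S$ with $\partial S = \llbracket \Gamma' \rrbracket$, equality throughout the mass chain forces both $\spt S \cap \mathrm{int}(\Sigma) = \emptyset$ (so that $\MM(S + \llbracket \Sigma \rrbracket) = \MM(S) + \MM(\Sigma)$ holds without cancellation) and that $S + \llbracket \Sigma \rrbracket$ is itself a minimizer of $\llbracket \Gamma \rrbracket$. Thus $T$ and $S + \llbracket \Sigma \rrbracket$ are two minimizers of $\llbracket \Gamma \rrbracket$ that coincide as multiplicity-one currents on the nonempty open set $\mathrm{int}(\Sigma) \subset \reg T$. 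Since $\sing T$ has Hausdorff dimension at most $n-7$ it cannot disconnect $\spt T \setminus \Gamma$, so $\reg T$ is connected. Unique continuation for the minimal surface equation, combined with the strong maximum principle underlying Lemmas~\ref{lemm:disjoint.or.cross} and~\ref{lemm:disjoint} --- applied in a subdomain $\Omega$ containing $\spt T$ in which $T$ and $S + \llbracket \Sigma \rrbracket$ are homologous --- propagates the coincidence from $\mathrm{int}(\Sigma)$ throughout $\reg T$, and hence throughout $\spt T$ by rectifiability. Therefore $S + \llbracket \Sigma \rrbracket = T$, so $S = T'$. The delicate point is running this propagation when $T$ itself is not uniquely minimizing and may be singular away from $\Gamma$: the SMP circumvents the lack of global uniqueness for $\llbracket \Gamma \rrbracket$ by pinning down local coincidence on $\Sigma$ and extending it through the connected regular set.
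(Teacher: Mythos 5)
Your overall strategy --- retract the boundary along the outermost sheet, subtract the collar $\llbracket \Sigma \rrbracket$, get minimality of $T'$ from the mass-comparison chain, and get uniqueness from unique continuation --- is the same as the paper's, which routes everything through Lemma \ref{lemm:hardt-simon-uniqueness}. However, there is a genuine gap at the crux of the uniqueness argument. From the equality $\MM(S+\llbracket\Sigma\rrbracket)=\MM(S)+\MM(\Sigma)$ you conclude that $\spt S\cap\operatorname{int}(\Sigma)=\emptyset$ and, more importantly, that $T$ and $S+\llbracket\Sigma\rrbracket$ \emph{coincide on a nonempty open set}. Neither follows: mass additivity only rules out cancellation, not overlap or touching, and even disjointness of $\spt S$ from $\operatorname{int}(\Sigma)$ would not produce an open neighborhood of a point of $\Sigma$ on which $S$ vanishes --- $\spt S$ could accumulate on $\Sigma$, or $S+\llbracket\Sigma\rrbracket$ could consist, near $\Gamma'$, of the collar together with a second sheet crossing it transversely. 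That configuration is exactly case (ii) of Theorem \ref{theo:hardt-simon-boundary-regularity} and is a perfectly admissible boundary behavior for the minimizer $S+\llbracket\Sigma\rrbracket$ at $\Gamma$, so it cannot be excluded by boundary regularity alone. This is where the paper's Lemma \ref{lemm:hardt-simon-uniqueness} does real work: it applies \cite[Corollaries 2--3]{White:boundary-regularity-multiplicity} at the \emph{retracted} boundary $\Gamma'$ to see that $T'$ and $S$ either coincide or meet transversely there, and then observes that in the transverse case the recombined minimizer $T-T'+S=\llbracket\Sigma\rrbracket+S$ would carry a codimension-one singular set along $\Gamma'$, which is an \emph{interior} point set for the $\Gamma$-problem --- contradicting the Hausdorff dimension bound $\leq n-7$ on interior singular sets. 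Only after securing coincidence near $\Gamma'$ does unique continuation take over. Without some such mechanism your propagation argument has no starting point.

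A secondary issue: you assert that since $\partial T=\llbracket\Gamma\rrbracket$ has multiplicity one, $T$ is a smooth multiplicity-one manifold-with-boundary near $\Gamma$. This is false in general: by Theorem \ref{theo:hardt-simon-boundary-regularity}(ii), near a component of $\Gamma$ of density $m+\tfrac12$ the current equals $m\llbracket M'\rrbracket+(m+1)\llbracket M''\rrbracket$, and the lemma is stated for (and applied in the paper to) currents $T\notin\II_n^1(\RR^{n+1})$. The paper's construction accordingly retracts only the density-$\tfrac12$ boundary components $\Gamma^{(1)}$ from the decomposition of Corollary \ref{coro:hardt-simon-decomposition} and leaves the higher-density components fixed; your collar subtraction and the ``$T'\in\II_n^1$ iff $T\in\II_n^1$'' claim both need this adaptation.
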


(See also the recent work of \cite{CaldiniMarcheseMerloSteinbruchel}.)

\begin{lemma} \label{lemm:main.plateau.main.lemma}
	Fix $\Gamma$ as in Theorem \ref{theo:main.plateau.baire} and assume that $T \in \II^1_n(\RR^{n+1})$ is uniquely minimizing with $\partial T = \llbracket \Gamma \rrbracket$ and $\spt T$ is connected. Then, for every neighborhood $\cG$ of $\Gamma$, there exists a $\Gamma' \in \cG$ and a minimizing $T' \in \II_n^1(\RR^{n+1})$ with $\partial T' = \llbracket \Gamma' \rrbracket$ so that, in fact, $\spt T'$ is a smooth, compact, connected, oriented hypersurface with boundary $\Gamma'$ and $\spt T' \cap \spt T = \emptyset$.
\end{lemma}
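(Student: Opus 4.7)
My plan is to reduce the lemma to a quantitative density-drop step and iterate. The density-drop step claims: under the hypotheses, one can find $\Gamma^\sharp \in \cG$ and a uniquely minimizing $T^\sharp \in \II_n^1(\RR^{n+1})$ with $\partial T^\sharp = \llbracket \Gamma^\sharp \rrbracket$, $\spt T^\sharp$ connected and disjoint from $\spt T$, such that either $\sing T^\sharp = \emptyset$ or $\cD(T^\sharp) \leq \cD(T) - \eta$, where $\eta = \eta(n)$ is the dimensional constant from Theorem \ref{theo:high.density.packing}. Iterating the density drop---shrinking the admissible perturbation radius at each stage so the cumulative perturbation stays in $\cG$, and always moving boundaries in the same fixed normal direction---must terminate in at most $\lceil (\cD(T) - \Theta_n^*)/\eta \rceil + 1$ steps: once $\cD$ falls below $\Theta_n^*$, the definition of $\Theta_n^*$ in \eqref{eq:theta.star} forces $\sing T' = \emptyset$, hence smoothness. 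Monotonicity of the iterates together with Lemma \ref{lemm:disjoint} (applied with appropriate enlarged domains between $\Gamma$ and $\Gamma(T^{(i)})$) inductively propagates $\spt T^{(i)} \cap \spt T = \emptyset$ across all iterates.

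For the density-drop step, I would extend $\Gamma$ to a smooth closed hypersurface $\Sigma \subset \RR^{n+1}$ containing a thin two-dimensional strip $\Sigma_L$ obtained by flowing $\Gamma$ in a chosen $1$-dimensional normal direction $L$ of its rank-$2$ normal bundle, and let $\Omega$ denote the component of $\RR^{n+1} \setminus \Sigma$ containing $\spt T \setminus \Gamma$. For a positive unit section $\varphi$ of $L$ and small $t_0 > 0$, the normal graphs $\Gamma_t := \Graph_\Gamma(t \varphi)$ lie in $\Sigma = \partial \Omega$ for $t \in [0, t_0]$. For each such $t$ I select a minimizer $T_t \in \II_n^1(\RR^{n+1})$ with $\partial T_t = \llbracket \Gamma_t \rrbracket$ homologous to $T$ in $(\Omega, \partial \Omega)$. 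Unique minimality of $T = T_0$ together with the usual flat compactness of minimizers forces $T_t \to T$ in flat norm as $t \to 0$; Theorem \ref{theo:hardt-simon-boundary-regularity} (Hardt--Simon boundary regularity) then yields $\spt T_t \cap \partial \Omega = \Gamma_t$ smoothly and transversely for small $t$, and Lemma \ref{lemm:disjoint} applied to pairs $(T_t, T_{t'})$ delivers pairwise disjointness and monotonicity of $\{\spt T_t\}$. Setting $t_k := k t_0 / K$ uniformly, for $K$ large the family $T, T_{t_1}, \ldots, T_{t_K}$ verifies hypotheses (a)--(e) of Theorem \ref{theo:high.density.packing}, which hands us some index $k^* \geq 1$ with $\cD(T_{t_{k^*}}) \leq \cD(T) - \eta$. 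Finally, Lemma \ref{lemm:main.plateau.uniqueness.shrinking} applied to $T_{t_{k^*}}$ produces a uniquely minimizing $T^\sharp$ with $\spt T^\sharp \subsetneq \spt T_{t_{k^*}}$; multiplicity one combined with this inclusion gives $\Theta_{T^\sharp}(\bx) \leq \Theta_{T_{t_{k^*}}}(\bx)$ at any $\bx \in \sing T^\sharp$, so $\cD(T^\sharp) \leq \cD(T_{t_{k^*}}) \leq \cD(T) - \eta$, and $\spt T^\sharp \cap \spt T = \emptyset$ is inherited from the corresponding property of $T_{t_{k^*}}$.

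The main technical obstacle I anticipate is arranging the global setup required by Theorem \ref{theo:high.density.packing}. Because $\Gamma$ has codimension $2$ in $\RR^{n+1}$, one must first select a single $1$-dimensional perturbation direction within the rank-$2$ normal bundle, then extend the resulting $n$-dimensional strip $\Sigma_L$ to a \emph{global} closed hypersurface bounding a single $\Omega$ that hosts every $\Gamma_t$ simultaneously on its boundary, and finally verify that each $\spt T_t$ meets $\partial \Omega$ only along its own $\Gamma_t$ and transversely. A secondary subtlety is verifying that $\spt T^{(i)} \cap \spt T = \emptyset$ persists across iterations; this uses the strict monotonicity of the boundary perturbations together with Lemma \ref{lemm:disjoint} applied in a larger domain whose boundary contains both $\Gamma$ and $\Gamma(T^{(i)})$ joined by the swept family of normal graphs.
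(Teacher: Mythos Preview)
Your overall strategy---iterate a density-drop step powered by Theorem \ref{theo:high.density.packing}, terminating once $\cD$ falls below $\Theta_n^*$---is exactly what the paper does. The difference lies in how you set up the domain $\Omega$ and the foliation of boundaries, and the paper's choice neatly sidesteps the obstacle you correctly flag as the hardest part.

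Rather than trying to extend $\Gamma$ to an ad hoc closed hypersurface containing a normal strip, the paper first applies Lemma \ref{lemm:main.plateau.uniqueness.shrinking} to \emph{shrink} $T$: it takes the tubular neighborhood $U := U_s(\Gamma)$, sets $\Sigma := \partial U_s(\Gamma)$ (automatically a closed smooth hypersurface), and replaces $T$ by $T' := T \mres (\RR^{n+1} \setminus \bar U)$, which is uniquely minimizing with boundary $\Gamma' := M \cap \Sigma \subset \Sigma$. The unit-speed foliation $(\Gamma'_t)_{t \in [0,\delta]}$ is then taken \emph{inside} $\Sigma$, and Theorem \ref{theo:high.density.packing} is invoked with $\Omega = \bar B_R(\bOh) \setminus U$ for large $R$. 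This buys three things at once: $\Sigma$ is canonical (no extension to construct), all $\Gamma'_t$ automatically live on $\partial\Omega$, and transversality of $\spt T'_t$ with $\partial\Omega$ follows directly from Allard boundary regularity plus the unique minimality of $T'_0 = T'$. Your construction could in principle be made to work, but the verification that $\spt T_t \cap \partial\Omega = \Gamma_t$ transversely would depend on the particulars of how you close up the strip $\Sigma_L$ into a global hypersurface, and there is no obvious reason the minimizers $T_t$ should avoid the artificial part of your $\Sigma$.

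A minor point: the paper handles connectedness of $\spt T'_t$ (hypothesis (b) of Theorem \ref{theo:high.density.packing}) via a separate argument (Lemma \ref{lemm:plateau.minimizing.family}), which you have not addressed; your proposal implicitly assumes it but it does require a short contradiction argument using Lemma \ref{lemm:reg.connected} and Lemma \ref{lemm:disjoint.or.cross}.
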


Let us see how these two lemmas imply Theorem \ref{theo:main.plateau.baire}.

\begin{proof}[Proof of Theorem \ref{theo:main.plateau.baire} assuming Lemmas \ref{lemm:main.plateau.uniqueness.shrinking}, \ref{lemm:main.plateau.main.lemma}]
	Suppose that $\Gamma^\circ$ has connected components $\Gamma^\circ_1, \ldots, \Gamma^\circ_q$. We'll retain this enumeration of the components going forward. It'll also be convenient to agree that we'll write $\Gamma = (\Gamma_1, \ldots, \Gamma_q)$ for any $\Gamma \in \cG^\circ$.
	
	For $\cP \subset \{ 1, \ldots, q \}$ denote by
	\[ \cG^\circ_{*,\cP} \subset \cG^\circ \]
	the set of all $\Gamma = (\Gamma_1, \ldots, \Gamma_q) \in \cG^\circ$ with the following defining property: \textbf{if} there exists $T \in \II_n(\RR^{n+1})$ minimizing with $\partial T = \sum_{i \in \cP} \llbracket \Gamma_i \rrbracket$ and satisfying
	\begin{enumerate}
		\item[(C$_0$)] $\spt T$ is connected, 
	\end{enumerate}
	\textbf{then} $T$ also satisfies:
	\begin{enumerate}
		\item[(C$_1$)] $T \in \II_n^1(\RR^{n+1})$, \textit{and}
		\item[(C$_2$)] $\spt T$ is smooth hypersurface with boundary, \textit{and}
		\item[(C$_3$)] $\spt T$ is strictly stable under Dirichlet boundary conditions, \textit{and}
		\item[(C$_4$)] $T$ is \textit{uniquely} minimizing.
	\end{enumerate}
	Note that:
	\begin{itemize}
		\item By Corollary \ref{coro:hardt-simon-decomposition}, if $T$ satisfies (C$_1$) and (C$_2$) then it is of the form we need for our theorem.
		\item If a single minimizing $T$ with $\partial T = \sum_{i \in \cP} \llbracket \Gamma_i \rrbracket$ satisfies (C$_0$)-(C$_4$), then $\Gamma \in \cG^\circ_{*,\cP}$ due to (C$_4$).
	\end{itemize}
		
	\begin{claim} \label{clai:main.plateau.baire.open}
		For all $\cP \subset \{ 1, \ldots, q \}$, $\cG^\circ_{*,\cP}$ is open in $\cG^\circ$.
	\end{claim}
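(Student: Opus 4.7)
The plan is to argue by contradiction, leveraging the four properties (C$_1$)--(C$_4$) together with compactness and Hardt--Simon/Allard boundary and interior regularity. Assume $\Gamma \in \cG^\circ_{*,\cP}$ and that there is a sequence $\Gamma^{(j)} \to \Gamma$ in $\cG^\circ$ with $\Gamma^{(j)} \notin \cG^\circ_{*,\cP}$. For each $j$ this produces a minimizing $T^{(j)} \in \II_n(\RR^{n+1})$ with $\partial T^{(j)} = \sum_{i \in \cP} \llbracket \Gamma_i^{(j)} \rrbracket$ and connected $\spt T^{(j)}$, violating at least one of (C$_1$)--(C$_4$). A uniform mass bound (comparing against a fixed filling of $\sum_{i \in \cP}\llbracket \Gamma_i\rrbracket$, transported to $\Gamma^{(j)}$) and Federer--Fleming compactness yield a subsequential weak limit $T^{(j)} \rightharpoonup T^\infty$ minimizing with $\partial T^\infty = \sum_{i \in \cP}\llbracket \Gamma_i\rrbracket$.

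The first key step is to show $\spt T^\infty$ is connected. Allard's density lower bound on supports of minimizers, combined with weak convergence and lower semicontinuity of mass, yields Hausdorff convergence $\spt T^{(j)} \to \spt T^\infty$ on compact sets; the convex-hull property keeps all supports in a fixed compact region. Since each $\spt T^{(j)}$ is compact and connected, the Hausdorff limit $\spt T^\infty$ is connected: if it split into two nonempty closed pieces at positive distance, $\spt T^{(j)}$ would eventually lie in two disjoint open neighborhoods while meeting both, contradicting connectedness of $\spt T^{(j)}$. By $\Gamma \in \cG^\circ_{*,\cP}$, $T^\infty$ therefore satisfies (C$_1$)--(C$_4$).

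The second step is to transfer these properties back to $T^{(j)}$ for $j$ large. Multiplicity-one smoothness of $T^\infty$ with smooth boundary $\Gamma$, together with Theorem~\ref{theo:hardt-simon-boundary-regularity} and Allard's interior regularity, forces $T^{(j)} \to T^\infty$ in $C^\infty$ on a neighborhood of $\spt T^\infty$ (boundary regularity providing control in a uniform tubular neighborhood of $\Gamma^{(j)}$). Smooth convergence immediately gives (C$_1$) and (C$_2$) for $T^{(j)}$, and continuity of the first Dirichlet eigenvalue of the Jacobi operator under $C^2$ convergence transfers strict stability, giving (C$_3$).

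The main obstacle is (C$_4$) uniqueness. If another minimizer $\tilde T^{(j)}$ for $\Gamma^{(j)}$ existed, a further subsequential weak limit $\tilde T^{(j)} \rightharpoonup \tilde T^\infty$ would be a minimizer for $\Gamma$, hence by (C$_4$) applied to $T^\infty$ equal to $T^\infty$. Thus $\tilde T^{(j)} \to T^\infty$ in $C^\infty$ too, and both $T^{(j)}$ and $\tilde T^{(j)}$ become small normal graphs over $T^\infty$ for $j$ large. Strict stability of the Dirichlet Jacobi operator on $T^\infty$, implemented via an implicit-function-theorem argument for the minimal surface operator with prescribed smooth boundary, forces any two such minimal-surface graphs over $T^\infty$ with the same prescribed boundary to coincide; hence $\tilde T^{(j)} = T^{(j)}$. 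This contradicts the failure of some (C$_k$) for $T^{(j)}$ and completes the proof. The delicate point is precisely the last step: converting strict stability of $T^\infty$ into a neighborhood of perturbed boundaries on which minimizers are uniquely determined, which is the one place where the local variational structure (rather than just compactness) is indispensable.
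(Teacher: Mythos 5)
Your proof is correct and follows essentially the same route as the paper's: argue by contradiction, use compactness and upper semicontinuity of density to identify a connected multiplicity-one limit $T^\infty$ satisfying (C$_1$)--(C$_4$), use Allard/Hardt--Simon regularity to upgrade to smooth graphical convergence and hence transfer (C$_1$)--(C$_3$) to the sequence, and use nondegeneracy to rule out failure of (C$_4$). The only cosmetic difference is the last step: the paper extracts a nontrivial boundary-vanishing Jacobi field from two distinct minimizers converging graphically to the limit (contradicting strict stability), whereas you invoke the equivalent implicit-function-theorem local uniqueness for graphical minimal surfaces with prescribed boundary over the strictly stable limit.
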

	\begin{proof}[Proof of claim]
		We argue by contradiction. Suppose that $\Gamma \in \cG^\circ_{*,\cP}$ and that there exists a sequence (where the index is suppressed) $\Gamma' \to \Gamma$ in the smooth topology with $\Gamma' \in \cG^\circ \setminus \cG^\circ_{*,\cP}$. For each $\Gamma'$, there exists a minimizing $T' \in \II_n(\RR^{n+1})$ with $\partial T' = \sum_{i \in \cP} \llbracket \Gamma_i' \rrbracket$ and $T'$ satisfying (C$_0$) but failing (C$_1$) or (C$_2$) or (C$_3$) or (C$_4$) above. 
		
		By passing to subsequences we can assume that $T' \rightharpoonup T \in \II_n(\RR^{n+1})$ that is minimizing with $\partial T = \sum_{i \in \cP} \llbracket \Gamma_i \rrbracket$. Note that since each $T'$ satisfies (C$_0$), $T$ does too by 
		the upper semicontinuity of density, and since $\Gamma \in \cG^\circ_{*,\cP}$, $T$ then also satisfies (C$_1$)-(C$_4$).
		
		It also follows from the upper semicontinuity of density and $T$ satisfying (C$_1$) that, up to discarding finitely many terms in the sequence, the $T'$ all satisfy (C$_1$) as well. Since $T$ also satisfies (C$_2$), it then follows from Allard's interior and boundary regularity theorem together with the upper semicontinuity of densities that the convergence $\spt T' \to \spt T$ is smooth up to the boundary; in particular, up to discarding finitely many terms in the sequence, the $T'$ also all satisfy (C$_2$) as well. We now conclude by showing that neither (C$_3$) nor (C$_4$) can fail for infinitely many $T'$.
		
		First suppose that infinitely many $T'$ fail (C$_3$). Pass to such a subsequence. Then each $\spt T'$ is degenerate and carries nontrivial Jacobi fields that vanish on the boundary. Because of the smooth convergence $\spt T' \to \spt T$, these yield (after the standard normalization procedure) nontrivial Jacobi fields on $\spt T$ that vanish on the boundary. Thus, $\spt T$ is degenerate, a contradiction to $T$ satisfying (C$_3$).
		
		It must therefore be that infinitely many $T'$ fail (C$_4$). Pass to such a subsequence. Let $\tilde T'$ be any competing minimizer with respect to $g'$ that is distinct from $T'$. We have $\tilde T' \rightharpoonup T$ after passing to a further subsequence because $T$ satisfies (C$_4$). By the argument above (it only relies on the limit $T$ satisfying (C$_1$) and (C$_2$)) the convergence of the supports is again smooth and graphical with multiplicity one. Using the standard construction in the setting of a graphical convergence $\spt T'$, $\spt \tilde T' \to \spt T$, we produce (after the standard normalization procedure) a nontrivial Jacobi field on $\spt T$ that vanishes on the boundary. Thus, $\spt T$ is degenerate, again a contradiction to $T$ satisfying (C$_3$).

		Since all cases lead to a contradiction, $\cG^\circ_{*,\cP}$ is indeed open.
	\end{proof}
	
	\begin{claim} \label{clai:main.plateau.baire.dense}
		For all $\cP \subset \{ 1, \ldots, q \}$, $\cG^\circ_{*,\cP}$ is dense in $\cG^\circ$.
	\end{claim}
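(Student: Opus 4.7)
The plan is to prove density of $\cG^\circ_{*,\cP}$ via an iterative density-reduction scheme that combines Lemmas \ref{lemm:main.plateau.uniqueness.shrinking}, \ref{lemm:main.plateau.main.lemma} with Theorem \ref{theo:high.density.packing}. Fix $\Gamma \in \cG^\circ$ and an open neighborhood $\cG \ni \Gamma$; we aim to produce $\Gamma' \in \cG \cap \cG^\circ_{*,\cP}$. If no minimizer with boundary $\sum_{i \in \cP}\llbracket \Gamma_i \rrbracket$ satisfies (C$_0$), then $\Gamma \in \cG^\circ_{*,\cP}$ vacuously and we take $\Gamma' = \Gamma$. Otherwise, fix some minimizer $T^{(0)}$ with connected support; after a preliminary reduction (exploiting the multiplicity-one boundary $\llbracket \Gamma \rrbracket$ and Lemma \ref{lemm:main.plateau.uniqueness.shrinking}'s support-shrinking to eliminate any higher-multiplicity configuration), we may assume $T^{(0)} \in \II_n^1$.

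The inductive step takes $\Gamma^{(k)} \in \cG$ with a minimizer $T^{(k)} \in \II_n^1$ of connected support, and assuming $\sing T^{(k)} \neq \emptyset$, produces $\Gamma^{(k+1)} \in \cG$ and a minimizer $T^{(k+1)} \in \II_n^1$ with $\cD(T^{(k+1)}) \leq \cD(T^{(k)}) - \eta$, where $\eta=\eta(n)$ is as in Theorem \ref{theo:high.density.packing}. First, Lemma \ref{lemm:main.plateau.uniqueness.shrinking} perturbs $\Gamma^{(k)}$ into a $\tilde\Gamma^{(k)}$ whose minimizer $\tilde T^{(k)}$ is unique, still in $\II_n^1$, with $\spt \tilde T^{(k)} \subsetneq \spt T^{(k)}$. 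Next, we set up a one-parameter graphical family $\Gamma^{(k)}_s := \Graph_{\tilde\Gamma^{(k)}}(s h)$, $s \in [0, \tau]$, where $h$ is a positive section of the normal bundle and $\tau > 0$ is small enough to keep the whole family inside $\cG$. For each $s$ we choose a minimizer $T^{(k)}_s$ with connected support; by Lemma \ref{lemm:disjoint} applied pairwise (with $\tilde T^{(k)}$ serving as a separating seed and the lateral annulus between $\Gamma^{(k)}_s$ and $\Gamma^{(k)}_{s'}$ furnishing the $\Sigma$ required by that lemma), the supports are pairwise disjoint and appear in a monotone ordering. Sampling $s_0 = 0 < s_1 < \dots < s_K$ uniformly and applying Theorem \ref{theo:high.density.packing} to the resulting chain $\tilde T^{(k)} = T^{(k)}_{s_0}, T^{(k)}_{s_1}, \ldots, T^{(k)}_{s_K}$ then forces some $s_*$ to satisfy $\cD(T^{(k)}_{s_*}) \leq \cD(\tilde T^{(k)}) - \eta \leq \cD(T^{(k)}) - \eta$; set $T^{(k+1)} := T^{(k)}_{s_*}$ and $\Gamma^{(k+1)} := \Gamma^{(k)}_{s_*}$.

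Since $\sing T \neq \emptyset$ forces $\cD(T) \geq \Theta_n^*$ by Allard's theorem, the iteration terminates after at most $\lceil \cD(T^{(0)})/\eta \rceil$ steps at a smooth $T^{(K)} \in \II_n^1$ of connected support. A final application of Lemma \ref{lemm:main.plateau.uniqueness.shrinking} to this smooth $T^{(K)}$ then delivers a $\Gamma' \in \cG$ admitting a smooth, strictly stable, uniquely minimizing $T' \in \II_n^1$ of connected support, verifying (C$_1$)--(C$_4$). The main obstacle is checking the technical hypotheses of Theorem \ref{theo:high.density.packing} for the sampled family: the uniform $C^{m,\theta}$ bound on the $h_k$ (which follows from the graphical ansatz), the uniform flat-distance smallness as $\tau \to 0$ (via a compactness argument for minimizers of nearby boundaries, together with upper semicontinuity of density), and the monotone separation $\inf(h_k - h_{k-1}) \geq \nu K^{-1}$ (built into the uniform sampling of $s$). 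A secondary subtlety is arranging the preliminary $\II_n^1$-reduction and verifying that the minimizer $T^{(k)}_s$ selected at each $s$ is itself in $\II_n^1$ (which propagates from the unit-multiplicity of $\tilde T^{(k)}$ and the disjointness of supports, again via Lemma \ref{lemm:disjoint}).
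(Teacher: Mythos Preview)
Your proposal has a genuine gap at the ``preliminary reduction'' to $T^{(0)} \in \II_n^1$. You invoke Lemma~\ref{lemm:main.plateau.uniqueness.shrinking} for this, but that lemma explicitly states that $T' \in \II_n^1(\RR^{n+1})$ \emph{if and only if} $T \in \II_n^1(\RR^{n+1})$; it preserves, rather than eliminates, higher multiplicity. The fact that $\partial T = \sum_{i\in\cP}\llbracket \Gamma_i\rrbracket$ has multiplicity one does not force the interior of $T$ to be multiplicity one: whenever $\cP$ has at least two elements, the Hardt--Simon boundary regularity (Theorem~\ref{theo:hardt-simon-boundary-regularity}, case (ii)) permits configurations where some boundary components sit in the interior of $\spt T$ with density $\tfrac32, \tfrac52, \ldots$, as in the concentric-spheres example. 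So the step ``we may assume $T^{(0)}\in\II_n^1$'' is unjustified as written.

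The paper resolves this by an induction on $\#\cP$ that you have omitted. If $T\notin\II_n^1$, the nested decomposition of Corollary~\ref{coro:hardt-simon-decomposition} writes $T = T_1+T_2$ with $T_1,T_2\in\II_n^1$ having boundaries indexed by disjoint $\cP_1,\cP_2\subsetneq\cP$. The inductive hypothesis (density and openness of $\cG^\circ_{*,\cP'}$ for all $\cP'\subsetneq\cP$) lets one first perturb $\Gamma$ into $\bigcap_{\cP'\subsetneq\cP}\cG^\circ_{*,\cP'}$, whence $T_1$ automatically satisfies (C$_0$)--(C$_4$); one can then apply Lemma~\ref{lemm:main.plateau.main.lemma} to $T_1$ to push it off $\spt T$, and argue that any new connected minimizer with multiplicity $>1$ has a strictly smaller $\cP_2$, so the process terminates. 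Without this induction you have no mechanism to reach $\II_n^1$.

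A secondary remark: once you \emph{are} in $\II_n^1$ with a unique minimizer, your inductive step rederives the content of Lemma~\ref{lemm:main.plateau.main.lemma} from scratch via Theorem~\ref{theo:high.density.packing}. This is not wrong, but it is redundant---the paper simply invokes the lemma. If you keep your direct argument, note that the verification that each $T^{(k)}_s$ lies in $\II_n^1$ with connected support does not follow from Lemma~\ref{lemm:disjoint} alone; it requires the unique minimization of $\tilde T^{(k)}$ together with upper semicontinuity of density and a separate connectedness argument (cf.\ Lemma~\ref{lemm:plateau.minimizing.family}).
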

	\begin{proof}[Proof of claim]
		We prove this claim by induction on the number of elements of $\cP$. The base case ($\cP = \emptyset$) is trivial, so we proceed to the inductive step. 
		
		We fix $\emptyset \neq \cP \subset \{ 1, \ldots, q \}$ and assume that $\cG^\circ_{*,\cP'}$ is dense in $\cG^\circ$ for all $\cP' \subset \{ 1, \ldots, q \}$ with $\# \cP' < \# \cP$. Let $\Gamma \in \cG^\circ$. We show there exist elements of $\cG^\circ_{*,\cP}$ arbitrarily close to $\Gamma$. 	Note that we know, by our inductive hypothesis, that
		\begin{equation} \label{eq:main.plateau.baire.dense.induction}
			\bigcap_{\cP' \subsetneq \cP} \cG^\circ_{*,\cP'} \text{ is open and dense in } \cG^\circ
		\end{equation}
		since each $\cG^\circ_{*,\cP'}$ with $\cP' \subsetneq \cP$ is already open and dense. Perturb $\Gamma$ (without relabeling) so that
		\begin{equation} \label{eq:main.plateau.baire.dense.setup}
			\Gamma \in \bigcap_{\cP' \subsetneq \cP} \cG^\circ_{*,\cP'}
		\end{equation}
		Now if $\Gamma \in \cG^\circ_{*,\cP}$, then we're done perturbing. So assume $\Gamma \not \in \cG^\circ_{*,\cP}$ going forward.
		
		Let $T \in \II_n(\RR^{n+1})$ be a minimizer with $\partial T = \sum_{i \in \cP} \llbracket \Gamma_i \rrbracket$, satisfying (C$_0$), but failing (C$_1$) or (C$_2$) or (C$_3$) or (C$_4$). 
		
		\textit{Iterative improvement}. We repeat the following process finitely many times to ensure $T$ eventually satisfies (C$_1$). In particular, if $T$ already satisfies (C$_1$), proceed to ``\textit{Final perturbation}'' below. Let
		\[ T = \sum_{i=1}^m \llbracket M^{(i)} \rrbracket \]
		be the nested decomposition given by Corollary \ref{coro:hardt-simon-decomposition}. Note that $M^{(1)}$ is connected too since $\bar M^{(1)} = \spt T$ is connected and Lemma \ref{lemm:reg.connected} applies to $\llbracket M^{(1)} \rrbracket$ away from $\partial M^{(1)}$.
		
		Note that $m \geq 2$ since $T$ doesn't satisfy (C$_1$), i.e., $T \not \in \II_n^1(\RR^{n+1})$. We claim that in fact $m=2$. Write
		\[ T_1 = \llbracket M^{(1)} \rrbracket, \; T_2 = \llbracket M^{(2)} \rrbracket. \]
		Note that $T_1$, $T_2$, and $T_1 + T_2$ are individually minimizing since, by the nested nature of the decomposition, $\Vert T_1 \Vert, \; \Vert T_2 \Vert, \; \Vert T_1 + T_2 \Vert \leq \Vert T \Vert$ so \cite[Lemma 33.4]{Simon:GMT} applies (given the minimizing nature of $T$). Also, $T_1$ and $T_1 + T_2$ satisfy (C$_0$) since $T$ does and, again by the nestedness of the decomposition, $\spt T = \spt T_1 = \spt (T_1 + T_2)$. Let
		\[ \emptyset \neq \cP_1, \; \cP_2 \subset \cP \]
		denote the set of indices of $\Gamma_i \in \cP$ occurring as boundaries of $T_1$, $T_2$, respectively. Note that $\cP_1 \cup \cP_2$ index the boundary components of $T_1 + T_2$ (by the nestedness of the decomposition) and $\cP_1 \cap \cP_2 = \emptyset$ (because our boundaries are assigned multiplicity one). Now since $T_1 + T_2$ manifestly does not satisfy (C$_1$), it follows from \eqref{eq:main.plateau.baire.dense.setup} that
		\[ \cP_1 \cup \cP_2 = \cP \implies T = T_1 + T_2 \text{ and } m=2. \]
		From this we deduce that $\cP_1 \subsetneq \cP$. In view of \eqref{eq:main.plateau.baire.dense.setup} and $T_1$ satisfying (C$_0$), it follows from our setup that $T_1$ satisfies (C$_1$)-(C$_4$) too. Even though $\spt T_2$ might be disconnected, $T_2$ also satisfies (C$_1$)-(C$_4$) given the nested nature of the decomposition.
		
		Apply Lemma \ref{lemm:main.plateau.main.lemma} to perturb $(\cup_{i \in \cP_1} \Gamma_i, T_1) \mapsto (\cup_{i \in \cP_1} \Gamma_i', T_1')$ with
		\begin{equation} \label{eq:main.plateau.baire.dense.push}
			\spt T_1' \cap \spt T = \emptyset, 
		\end{equation}
		and still within the open set in \eqref{eq:main.plateau.baire.dense.induction}; in particular, $T_1'$ still satisfies (C$_0$)-(C$_4$). Define:
		\[ \Gamma' := (\Gamma \setminus \cup_{i \in \cP_1} \Gamma_i) \cup (\cup_{i \in \cP_1} \Gamma_i'), \]
		and enumerate its components again as $(\Gamma_1', \ldots, \Gamma_q')$ (note that $\Gamma_i'$ remains unambiguously defined if $i \in \cP_1$). 
		
		Let $\tilde T' \in \II_n(\RR^{n+1})$ be a minimizer with $\partial \tilde T' = \sum_{i \in \cP} \llbracket \Gamma'_i \rrbracket$ and which satisfies (C$_0$). If $\tilde T'$ satisfies (C$_1$), we proceed to ``\textit{Final perturbation}'' below with $\Gamma' \mapsto \Gamma$ and $\tilde T' \mapsto T$. 
		
		Otherwise, $\tilde T'$ does not satisfy (C$_1$), i.e., $\tilde T' \not \in \II_n^1(\RR^{n+1})$, and arguing as we did for $T$, we have the nested decomposition
		\[ \tilde T' = \tilde T'_1 + \tilde T'_2 \]
		with $\tilde T'_1, \tilde T'_2 \in \II_n^1(\RR^{n+1})$ and $\tilde T'_1$ satisfying (C$_0$)-(C$_4$) and $\tilde T'_2$ satisfying (C$_1$)-(C$_4$). From this we will obtain a contradiction as long as our perturbation $T_1 \mapsto T_1'$ is sufficiently small. We also write $\tilde \cP'_1, \tilde \cP'_2 \subset \cP$ to contain the indices of components of $\spt \partial \tilde T'_1, \spt \partial \tilde T'_2$. Note that by the upper semicontinuity of density together with the density considerations in Corollary \ref{coro:hardt-simon-decomposition} we have $\tilde \cP_2' \subset \cP_2$ for sufficiently small perturbations $T_1 \mapsto T_1'$. We cannot have $\tilde \cP_2' = \cP_2$ because then it'd follow that $\tilde \cP_1' = \cP_1$ and since $T_1'$ satisfies (C$_4$) we'd have $\tilde T'_1 = T'_1$ and since $T_2$ satisfies (C$_4$) too we'd also have $\tilde T_2' = T_2$, a contradiction to \eqref{eq:main.plateau.baire.dense.push}.
		
		Thus, $\tilde \cP_2' \subsetneq \cP_2$. We can therefore restart our iterative improvement process with $\tilde T'$ in place of $T$ and $\Gamma'$ in place of $\Gamma$. This terminates after finitely many steps because $\cP_2$ is finite.
				
		\textit{Final perturbation}. We may assume, now, that $T$ satisfies (C$_0$) as well as (C$_1$). We then apply the following steps,\footnote{This part of the perturbation argument is identical to the one for manifolds in the next section.} skipping steps (i) and (ii) and proceeding to (iii) with $(\Gamma'', T'') := (\Gamma, T)$ if $T$ satisfies (C$_2$):
		\begin{enumerate}
			\item[(i)] Apply Lemma \ref{lemm:main.plateau.uniqueness.shrinking} to perturb $(\cup_{i \in \cP} \Gamma_i, T) \mapsto (\cup_{i \in \cP} \Gamma_i', T')$ so that $T' \in \II_n^1(\RR^{n+1})$ satisfies (C$_0$) and (C$_1$) and  (C$_4$).
			\item[(ii)] Apply Lemma \ref{lemm:main.plateau.main.lemma} to perturb $(\cup_{i \in \cP} \Gamma_i', T') \mapsto (\cup_{i \in \cP} \Gamma_i'', T'')$ so that $T'' \in \II_n^1(\RR^{n+1})$ satisfies (C$_0$) and (C$_1$) and (C$_2$).
			\item[(iii)] Finally, re-apply Lemma \ref{lemm:main.plateau.uniqueness.shrinking} to perturb $(\cup_{i \in \cP} \Gamma_i'', T'') \mapsto (\cup_{i \in \cP} \Gamma_i''', T''')$ so that $T''' \in \II_n^1(\RR^{n+1})$ satisfies (C$_0$), (C$_1$), (C$_2$), (C$_3$), (C$_4$). 
		\end{enumerate}
		Thus, $T'''$ satisfies (C$_0$)-(C$_4$). As we've discussed this implies $\Gamma''' \in \cG^\circ_{*,\cP}$.
	\end{proof}
	
	It follows from the finiteness of $\cP$ and Claims \ref{clai:main.plateau.baire.open}, \ref{clai:main.plateau.baire.dense} that
	\[ \cG^\circ_* := \bigcap_{\cP \subset \{ 1, \ldots, q \}} \cG^\circ_{*,\cP} \]
	is also open and dense in $\cG^\circ$.
	
	It remains to prove that $\cG^\circ_*$ has the desired smooth minimization property. To that end, let $\Gamma \in \cG^\circ_*$ and suppose that $T \in \II_n(\RR^{n+1})$ is mininimizing with $\partial T = \llbracket \Gamma \rrbracket$. Let $C$ denote any component of $\spt T$. Note that (\cite[Lemma 33.4]{Simon:GMT}) $T_\cP := T \mres C \in \II_n(\RR^{n+1})$ is minimizing with $\partial T_\cP = \llbracket \Gamma_\cP \rrbracket$ for some nonempty $\cP \subset \{ 1, \ldots, q \}$ and $\Gamma_\cP = \cup_{i \in \cP} \Gamma_i$. By construction, $\spt T_\cP$ is connected. It follows from $\Gamma \in \cG^\circ_* \subset \cG^\circ_{*,\cP}$ and the definition of $\cG^\circ_{*,\cP}$ that $\spt T_\cP = C$ is smooth of the desired form. Since the component $C$ of $\spt T$ was arbitrary, it follows that $T$ has the desired form.
\end{proof}

We now return to give proofs for the main lemmas. Below, $U_s(\cdot)$ will always denote a distance $< s$ tubular neighborhood in $\RR^{n+1}$. 

\begin{proof}[Proof of Lemma \ref{lemm:main.plateau.uniqueness.shrinking}]
	This is a consequence of Lemma \ref{lemm:hardt-simon-uniqueness} with (adapting notation from the lemma) $s > 0$ small enough that $\cup_{i=1}^m \Gamma'^{(i)} \in \cG$ and so  that $M'^{(1)} = M^{(1)} \setminus U_s(\Gamma^{(1)})$ is still connected. In case $m=1$ and $M^{(1)}$ is smooth, the strict stability of $M'^{(1)}$ is an easy consequence of the domain monotonicity of Dirichlet eigenvalues for Schr\"odinger operators.
\end{proof}

\begin{proof}[Proof of Lemma \ref{lemm:main.plateau.main.lemma}]
	We will apply an iterative improvement argument with $\cG$ fixed (as given). 
	
	\textit{Iterative improvement}. It follows from the Hardt--Simon boundary regularity theorem (particularly the decomposition in Corollary \ref{coro:hardt-simon-decomposition}, writing $M = M^{(1)}$ below) that
	\[ \spt T = M \cup E \]
	for a smooth, connected, oriented $M$ with boundary $\partial M = \Gamma$, and with $E = \bar M \setminus M \subset \RR^{n+1}$ compact with Hausdorff dimension $\leq n-7$, and since $T \in \II_n^1(\RR^{n+1})$, we also have $T = \llbracket M \rrbracket$, i.e., $T$ is the multiplicity-one current defined by $M$. 
	
	From the compactness of $\Gamma$, there exists $\gamma > 0$ such that $\bar U_{\gamma}(\Gamma) \cap E = \emptyset$. From the argument in the proof of Lemma \ref{lemm:main.plateau.uniqueness.shrinking} we know that, if $s \in (0, \gamma)$ is sufficiently small and we set
	\begin{equation} \label{eq:main.plateau.main.lemma.U.Sigma}
		U := U_s(\Gamma), \;  \Sigma := \partial U_s(\Gamma),
	\end{equation}
	then
	\[ T' := T \mres (\RR^{n+1} \setminus \bar U) \in \II_n^1(\RR^{n+1}) \]
	is uniquely minimizing with $\partial T' = \llbracket \Gamma' \rrbracket$, where $\Gamma' := M \cap \Sigma \in \cG$. Now set $\Gamma'_0 := \Gamma'$ and take a unit speed foliation
	\[ (\Gamma'_t)_{t \in [0, \delta]} \subset \Sigma, \]
	where $\delta > 0$ is small enough that $\Gamma'_t \in \cG$ for $t \in [0, \delta]$. For $t \in [0, \delta]$, let $T'_t \in \II_n(\RR^{n+1})$ be minimizing with $\partial T'_t = \partial \llbracket \Gamma'_t \rrbracket$. In what follows, it is important that the unit speed foliation be taken with consistently pointing unit normal vectors to $M$, which we can do because $M$ is connected and oriented. We also take care to choose consistently oriented unit normals (near the boundary, which is only $C^\infty$-perturbed in each step) for all steps in the iterative improvement to ensure our families are consistently monotone.

	Since $T' \in \II_n^1(\RR^{n+1})$ and is uniquely minimizing, we must have $T'_0 = T'$, as well as
	\begin{equation} \label{eq:main.with.uniqueness.limit}
		T'_t \in \II_n^1(\RR^{n+1}) \text{ and } T'_t \rightharpoonup T'_0 \text{ as } t \to 0,
	\end{equation}
	the former holding true by upper semicontinuity of densities. As before, $\spt T'_t = M'_t \cup E'_t$, where $M'_t$ is a (not yet known to be connected) smooth, oriented, minimizing hypersurface with boundary $\Gamma'_t$, $E'_t = \bar M'_t \setminus M'_t \subset \RR^{n+1}$ is a compact set with Hausdorff dimension $\leq n-7$, and $T'_t = \llbracket M'_t \rrbracket$. By Allard's boundary regularity theorem, $M'_t \to M'_0$ smoothly near $\Gamma'_0$, so we may further restrict $\delta > 0$ so that $\spt T'_t \cap U_s(\Gamma) = \emptyset$ and all $M'_t$ meet $\Sigma$ transversely, and the intersection is a graph over $\Gamma'_0$ in $\Sigma$.
	
	We may apply Theorem \ref{theo:high.density.packing} with the infinite family $\cT = \{ T'_t \}_{t \in [0,\delta]}$; see Lemma \ref{lemm:plateau.minimizing.family} below to see why all hypotheses are met. It follows that there exists $t > 0$ such that
	\[ \cD(T'_t) \leq \cD(T') - \eta, \]
	where $\cD(\cdot)$ is as in \eqref{eq:calD} and $\eta = \eta(n) > 0$. By construction, $T'_t$ is disjoint from $T$.
	
	If $\cD(T'_t) < \Theta_n^*$, then we are done by Allard's regularity theorem. Otherwise, if $\cD(T'_t) \geq \Theta^*$, we apply Lemma \ref{lemm:main.plateau.uniqueness.shrinking} to obtain a nearby $T^*$ with $\spt T^* \subset \spt T'_t$ and restart the iterative argument with $T^*$ in place of $T$. We will only iterate finitely many times before eventually reaching $\cD(T^*) < \Theta_n^*$, since $\eta$ is fixed.
\end{proof}

\begin{lemma} \label{lemm:plateau.minimizing.family}
	Suppose that $(T'_t)_{t \in [0,\delta]}$ are as in the iterative step in the proof of Theorem \ref{theo:main.plateau.baire} above. Then for $\delta' > 0$ small and $K \in \NN$ large, the list
	\[ T'_0, T'_{\delta'/K}, T'_{2\delta'/K}, \ldots, T'_{\delta'} \]
	satisfies the hypotheses of Theorem \ref{theo:high.density.packing} with $\breve N = \RR^{n+1}$ and $\Omega = \bar B_R(\bOh) \setminus U$ for large $R$ and with $U$ as in \eqref{eq:main.plateau.main.lemma.U.Sigma}.
\end{lemma}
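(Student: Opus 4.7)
The plan is to verify, one-by-one, each hypothesis of Theorem~\ref{theo:high.density.packing} with $T := T'_0$, $T_k := T'_{k\delta'/K}$, $\cT := \{T'_{k\delta'/K} : k = 0,\ldots,K\}$, $\breve N := \RR^{n+1}$, and $\Omega := \bar B_R(\bOh) \setminus U$. The parameters will be fixed in the order: first $R$ large, so that $\Omega$ contains every relevant support; then $\delta' > 0$ small, so that the quantitative hypotheses (c), (d) hold; and finally $K \in \NN$ large, so that (e) holds with the chosen $\nu$.

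For (a), I will observe that each leaf $\Gamma'_t$ is a smooth, nonempty, closed hypersurface of $\Sigma \subset \partial\Omega$, two-sided because the unit-speed foliation $(\Gamma'_t)$ of $\Sigma$ carries a global continuous normal. For (b), I will first choose $R$ large enough that $\spt T'_0 \subset B_{R/2}(\bOh)$; the standard lower density bound for minimizing currents combined with the weak convergence $T'_t \rightharpoonup T'_0$ from \eqref{eq:main.with.uniqueness.limit} then forces $\spt T'_t \subset B_R(\bOh) \setminus \bar U \subset \Omega$ for all $t$ sufficiently small, while Allard's boundary regularity gives smooth transversal intersection $\spt T'_t \cap \partial\Omega = \Gamma'_t$ (close to the arranged transversal intersection at $\Gamma'_0$). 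The homology requirement in $(\Omega,\partial\Omega)$ will follow because $T'_{t_1} - T'_{t_2}$ together with the oriented foliation strip in $\Sigma$ between $\Gamma'_{t_1}$ and $\Gamma'_{t_2}$ forms an $n$-cycle in $\bar\Omega$, which by $H_n(\RR^{n+1}) = 0$ bounds a compactly supported integral $(n+1)$-current; after further enlarging $R$ if necessary, this bounding current is supported inside $\Omega$.

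The hard part will be the connectedness clause of (b) for $\spt T'_t$ when $t > 0$ is small. I plan to argue by contradiction: should $\spt T'_{t_i}$ admit an extra disjoint component $C_i$ for some $t_i \to 0^+$, the connectedness of $\Gamma'_{t_i}$ would force $\partial(T'_{t_i}\mres C_i) = 0$, and the separated-component decomposition (cf.\ \cite[33.4]{Simon:GMT}) would make $T'_{t_i}\mres C_i$ itself a minimizing integral current. Being compactly supported in $\RR^{n+1}$ and boundaryless, we could write $T'_{t_i}\mres C_i = \partial R_i$ for a compactly supported integral $(n+1)$-current $R_i$ (using $H_n(\RR^{n+1}) = 0$); competing against $0$ via minimality would then give $\MM(T'_{t_i}\mres C_i) \leq 0$, i.e., $C_i = \emptyset$, a contradiction.

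For the quantitative hypotheses (c), (d), (e), I will invoke the smoothness of the foliation together with \eqref{eq:main.with.uniqueness.limit}. Parameterizing via the normal bundle of $\Gamma'_0$ in $\Sigma$, each leaf $\Gamma'_{k\delta'/K}$ will be a smooth graph $\Graph_{\Gamma'_0} h_k$ with $\Vert h_k \Vert_{C^{m,\theta}(\Gamma'_0)} = O(\delta')$, satisfying (c) for any fixed $D$ once $\delta'$ is small. The unit-speed parametrization will also yield $\inf_{\Gamma'_0}(h_k - h_{k-1}) \geq (1 - o_{\delta'}(1))\delta'/K \geq \tfrac{\delta'}{2K}$, so (e) will hold with $\nu := \delta'/2$. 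Finally, (d) will follow from $\dist_{\cF,\Omega}(T'_0, T'_t) \to 0$ as $t \to 0$---a consequence of \eqref{eq:main.with.uniqueness.limit}---by further shrinking $\delta'$ so that $\sup_{t \in [0,\delta']} \dist_{\cF,\Omega}(T'_0, T'_t) \leq \sigma$.
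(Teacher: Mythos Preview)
Your treatment of hypotheses (a), (c), (d), (e) and of the support/transversality/homology parts of (b) is essentially the same as the paper's and is correct.

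There is, however, a genuine gap in your connectedness argument for $\spt T'_t$. You write that ``the connectedness of $\Gamma'_{t_i}$ would force $\partial(T'_{t_i}\mres C_i) = 0$,'' and then dispose of $C_i$ by competing the boundaryless minimizing cycle against $0$. But nothing in the setup guarantees that $\Gamma'_{t_i}$ is connected. Recall that in Lemma~\ref{lemm:main.plateau.main.lemma} the boundary $\Gamma$ is merely a smooth closed oriented $(n-1)$-manifold (and in the proof of Claim~\ref{clai:main.plateau.baire.dense} one explicitly invokes Lemma~\ref{lemm:main.plateau.main.lemma} with $\Gamma = \cup_{i \in \cP} \Gamma_i'$, which has $\#\cP$ components). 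Since $\Gamma' = M \cap \Sigma$ with $\Sigma = \partial U_s(\Gamma)$, the boundary $\Gamma'$, and hence each $\Gamma'_t$, inherits at least as many components as $\Gamma$. When $\Gamma'_{t_i}$ is disconnected, a putative decomposition of $\spt T'_{t_i}$ into two pieces can assign some boundary components to each piece; then neither piece is a cycle, and your ``compete against $0$'' step does not apply.

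The paper's argument avoids this issue by not trying to kill a boundaryless component directly. Instead it writes $T'_{t_i} = T_i^a + T_i^b$ with disjoint supports, observes both summands are minimizing and (via the graphical boundary convergence and the monotonicity lower density bound) have nontrivial limits $T^a, T^b$ along a subsequence, and then uses $T^a + T^b = T'_0$ together with the multiplicity-one property to conclude $\spt T^a \cap \spt T^b = \emptyset$; this contradicts the connectedness of $\reg T'_0$ (Lemma~\ref{lemm:reg.connected}). You should replace your connectedness paragraph with this limiting argument.
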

\begin{proof}
	Take $R$ large enough so that the convex hull of $U$ has closure in $B_{R-1}(\bOh)$. 
	
	Conditions (a), (c), (e) of Theorem \ref{theo:high.density.packing} are automatic from our construction. Condition (d) follows from \eqref{eq:main.with.uniqueness.limit}. 
	
	It remains to prove (b). The smooth transverse intersection near the boundary follows from the Hardt--Simon boundary regularity theorem (Theorem \ref{theo:hardt-simon-boundary-regularity}), Allard's boundary regularity, and \eqref{eq:main.with.uniqueness.limit}. For connectedness, we argue by contradiction. Suppose that there were $t_i \to 0$ and $T_i := T'_{t_i} \rightharpoonup T'$ with $\spt T'_i$ containing $\geq 2$ components. Without loss of generality, we restrict all our currents to $\Omega \setminus \partial \Omega$ where they have no boundary. We can then write $T_i = T_i^a+ T_i^b$, for $T_i^a, T_i^b \in \II_n(\Omega \setminus \partial \Omega) \setminus \{ 0 \}$ with $\partial T_i^a = \partial T_i^b = 0$ and $\spt T_i^a \cap \spt T_i^b = \emptyset$ in $\Omega \setminus \partial \Omega$. Note that the latter disjointness guarantees $\MM(T_i^a) + \MM(T_i^b) = \MM(T_i)$, so both $T_i^a$ and $T_i^b$ are minimizing (\cite[Lemma 33.4]{Simon:GMT}). Thus, they individually satisfy $T_i^a \rightharpoonup T^a \neq 0$, $T_i^b \rightharpoonup T^b \neq 0$, with $T^a$ and $T^b$ both minimizing in $\Omega \setminus \partial \Omega$ with $\partial T^a = \partial T^b = 0$. (We have $T_a$, $T_b \neq 0$ because of the graphical convergence of $\Gamma'_{t_i} \to \Gamma_0'$ and uniform lower density bounds at a fixed scale by the monotonicity formula.) Moreover,
	\[ T^a + T^b = T', \]
	and thus
	\[ \Theta_{T^a}(\cdot) + \Theta_{T^b}(\cdot) = \Theta_{T'}(\cdot). \]
	In particular, the multiplicity-one nature of $T'$ forces $T^a$ and $T^b$ to also have multiplicity-one, and $\reg T^a$ and $\reg T^b$ to be disjoint. Thus, $\spt T^a$ and $\spt T^b$ are also disjoint by Lemma \ref{lemm:disjoint.or.cross}. Thus, $\reg T^a$ and $\reg T^b$ are relatively open and relatively closed in  $\reg T$. But $\reg T$ is connected (Lemma \ref{lemm:reg.connected}), so $\reg T^a$ or $\reg T^b$ has to be trivial, a contradiction.
\end{proof}

\section{Theorem \ref{theo:main.homology}: Generic smoothness of homological minimizers in manifolds} \label{sec:main.homology}

We show how to generalize Smale's $8$-dimensional generic smoothness result \cite{Smale:generic} to dimensions $9$ and $10$. We prove the following, which in turn implies Theorem \ref{theo:main.homology} in view of the Baire Category Theorem:

\begin{theorem} \label{theo:main.homology.baire}
	Let $N$ be a closed, oriented, $(n+1)$-dimensional manifold with $n+1 \in \{ 8, 9, 10 \}$. Then, there exists a comeager $\operatorname{Met}^*(N) \subset \operatorname{Met}(N)$ such that for every $g \in \operatorname{Met}^*(N)$ and nonzero $[\alpha] \in H_n(N, \ZZ)$ there is a minimizing integral $n$-current in $(N, g)$ that represents $[\alpha]$ and is supported on a smooth hypersurface.
\end{theorem}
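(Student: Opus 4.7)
The plan is to mirror the Baire-category structure used in the proof of Theorem \ref{theo:main.plateau.baire}. Because $N$ is closed, $H_n(N, \ZZ)$ is finitely generated and hence countable, so it suffices to exhibit, for each nonzero $[\alpha] \in H_n(N,\ZZ)$, an open dense set $\operatorname{Met}^*_{[\alpha]} \subset \operatorname{Met}(N)$ of metrics for which every mass-minimizer in $[\alpha]$ is supported on a smooth hypersurface; then $\operatorname{Met}^* := \bigcap_{[\alpha] \neq 0} \operatorname{Met}^*_{[\alpha]}$ is comeager by Baire. As in Theorem \ref{theo:main.plateau.baire}, one actually strengthens the defining property of $\operatorname{Met}^*_{[\alpha]}$ to the analogue of conditions (C$_0$)--(C$_4$) (connected support, unit multiplicity, smooth regular set, strict stability, unique minimization) and uses an induction on the nested decomposition from Corollary \ref{coro:hardt-simon-decomposition} to reduce to the connected-support case.

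Openness of $\operatorname{Met}^*_{[\alpha]}$ proceeds as in Claim \ref{clai:main.plateau.baire.open}: for $g_j \to g$ with $[\alpha]$-minimizers $T_j \rightharpoonup T$, upper semicontinuity of density together with Allard's interior regularity upgrade weak convergence to locally smooth graphical convergence of $T_j$ onto the smooth, strict-stable limit $T$, after which any failure of strict stability or unique minimization for infinitely many $T_j$ would produce a nontrivial Jacobi field on $T$ vanishing to the required order, contradicting strict stability of $T$.

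The core of the argument is density. Starting from $g$ with an $[\alpha]$-minimizer $T$ having $\sing T \neq \emptyset$, the plan is to construct a nearby metric $g'$ whose $[\alpha]$-minimizer strictly improves $\cD(T)$. I would first apply a metric-variant of Lemma \ref{lemm:main.plateau.uniqueness.shrinking} (easily arranged via a small conformal bump in a tube around a regular point) to make $T$ the \emph{unique} $g$-minimizer in its class. Next, pick a regular point $p \in \reg T$ and a small ball $B = B_r(p)$ with $\bar B \cap \sing T = \emptyset$, chosen so that $\spt T \setminus \bar B$ remains connected, and take conformal perturbations $g_s := e^{2s\phi}g$ with $\phi \in C^\infty_c(B)$ of opposite signs on the two sides of $T \cap B$. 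For each $s \in [0, \delta]$, let $T_s$ be a $g_s$-mass-minimizer in $[\alpha]$; using $T$ as a competitor for $g_s$ and invoking the established uniqueness of $T$, a flat-compactness argument yields $T_s \rightharpoonup T$ as $s \to 0$. Crucially, since $g_s \equiv g$ on $\Omega := N \setminus B$, each restriction $T_s \mres \Omega$ is a $g$-minimizer with smooth boundary $\Gamma(T_s) := T_s \cap \partial B$ that is a graph over $\Gamma(T_0)$ by Allard. The sign structure of $\phi$ forces monotone displacement of the family across $T_0$, so Lemma \ref{lemm:disjoint} applied inside $(\Omega, \partial \Omega)$ yields pairwise disjoint supports for any discretization $0 = s_0 < s_1 < \cdots < s_K = \delta$, with minimum graphical separation $\geq \nu/K$ for some $\nu > 0$. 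Theorem \ref{theo:high.density.packing} applied to the list $T_{s_0} \mres \Omega, \ldots, T_{s_K} \mres \Omega$, with $K$ large, then yields some $s_k$ with $\cD(T_{s_k}) \leq \cD(T) - \eta$. Iterating finitely often, with intermediate uniqueness-restoring perturbations, drives $\cD$ below $\Theta_n^*$, at which point Allard's regularity gives global smoothness.

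The principal obstacle is rigorously establishing the monotone, disjoint, graphical structure of $\{T_s\}$. One must rule out the scenario where $T_s$ jumps macroscopically to a distant homologous competitor rather than sitting close to $T$ --- handled by first arranging $T$ to be uniquely $g$-minimizing, after which mass bounds from the competitor $T$ and flat-compactness force $T_s \rightharpoonup T$. One must also verify that the sign of $\phi$ translates into a one-sided displacement of $T_s$ across $T_0 \cap B$, which follows by comparing the $g_s$-mass of $T_s$ with that of the pushed-over competitor obtained from $T_0$ and noting that $\phi$ strictly penalizes one side. Finally, ensuring that the restricted currents $T_s \mres \Omega$ satisfy hypotheses (a), (b) of Theorem \ref{theo:high.density.packing} requires the connectedness reduction mentioned above together with a careful choice of $B$ inside a single connected component of $\reg T$.
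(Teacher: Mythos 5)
Your overall architecture matches the paper's: define open dense sets $\operatorname{Met}^*_{[\alpha]}(N)$ via conditions of type (C$_0$)--(C$_3$), prove openness by a compactness/Jacobi-field argument, prove density by first making $T$ uniquely minimizing, then producing a monotone family of disjoint minimizers to feed into Theorem \ref{theo:high.density.packing}, and finally intersect over the countably many classes $[\alpha]$. The openness argument and the uniqueness-restoring perturbation (Lemma \ref{lemm:main.homology.uniqueness.perturbation}) are essentially as in the paper. (A minor slip: for closed cycles the reduction to multiplicity one does not go through the Hardt--Simon nested boundary decomposition of Corollary \ref{coro:hardt-simon-decomposition}, which is a statement about the Plateau problem; the paper instead uses constancy of the multiplicity on a connected support, divides by it, and recovers minimality of $\ell\hat T'$ via White's decomposition theorem.)

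The genuine gap is in the construction of the monotone family. You take a \emph{single fixed} conformal perturbation $g_s = e^{2s\phi}g$ with $\phi$ of opposite signs on the two sides of $T\cap B$, and assert that the discretized family $T_{s_0},\ldots,T_{s_K}$ is pairwise disjoint with consecutive graphical separation $\geq \nu/K$. Neither claim follows. Your mass-comparison argument only penalizes crossing the \emph{original} leaf $T_0$, so at best it places every $T_s$ weakly on one side of $T_0$; it says nothing about the relative position of $T_{s_{k-1}}$ and $T_{s_k}$, which is what Lemma \ref{lemm:disjoint} and, crucially, hypothesis (e) of Theorem \ref{theo:high.density.packing} require. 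Once $T_s$ has moved off $T_0$, the fixed $\phi$ need not have a normal derivative of a definite sign along $T_s$, so increasing $s$ need not displace the minimizer at all (the map $s\mapsto T_s$ can be constant on intervals, discontinuous, or non-monotone), and the uniform lower bound $\inf(h_{s_k}-h_{s_{k-1}})\geq\nu K^{-1}$ --- the superlinear-H\"older input that the whole packing argument hinges on --- fails. This is exactly why the paper's Proposition \ref{prop:deformation.manifold} builds the metrics \emph{iteratively}, re-centering the conformal bump $\zeta(\cdot - h_{K,k-1})$ at the graph of the previous minimizer at each step: the normalization $s\zeta(s)\geq 0$, $\zeta'(0)>0$ forces the mean curvature of $T_{K,k-1}$ with respect to $g_{K,k}$ to point toward $T_{K,k}$ with magnitude comparable to $\tau K^{-1}\chi$, and a barrier argument plus a Harnack/strong-maximum-principle limit then yields the quantitative gap (f). Without this (or an equivalent device), your application of Theorem \ref{theo:high.density.packing} is unjustified.
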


We will need two technical lemmas corresponding to \cite[Lemmas 1.3, 1.4]{Smale:generic}. The former has no dimensional restrictions in \cite{Smale:generic}, but its hypotheses and proof appear incomplete. The latter is the key technical lemma in \cite{Smale:generic} and had the dimensional restriction $n+1=8$, which we improve to $n+1 \in \{8,9,10\}$. 

\begin{lemma} \label{lemm:main.homology.uniqueness.perturbation}
	Fix $N$ as in Theorem \ref{theo:main.homology.baire}, $g \in \operatorname{Met}(N)$, and assume that $T \in \II^1_n(N)$ is minimizing in $[\alpha] \in H_n(N, \ZZ)$ with respect to $g$ and $\spt T$ is connected. Then, for every neighborhood $\cG \subset \operatorname{Met}(N)$ of $g$, there exists $g' \in \cG$ such that $T$ is \textit{uniquely} minimizing in $[\alpha]$ with respect to $g'$. If $\spt T$ is smooth, then we can also arrange for $\spt T$ to be strictly stable with respect to $g'$.\footnote{We mean this in the sense of the second variation of $\spt T$ (a smooth hypersurface) being positive definite for normal directions.} 
\end{lemma}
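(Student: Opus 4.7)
The plan is to produce $g'$ as a conformal rescaling $g' = e^{2\epsilon \phi} g$ of $g$ for a nonnegative smooth function $\phi$ that vanishes precisely on $\spt T$, with $\epsilon>0$ small. The guiding principle is that such a conformal change preserves the mass of $T$ while strictly increasing the mass of any homologous current whose support escapes $\spt T$. To construct $\phi$, I would start from a smooth $\phi \geq 0$ on $N$ with $\phi^{-1}(0) = \spt T$, built via a partition of unity on $N \setminus \spt T$ (standard since $\spt T$ is closed). In the smooth case I would refine the construction by adding $\rho^2 \chi$ near $\spt T$, where $\rho$ is the signed distance to $\spt T$ and $\chi$ is a smooth cutoff supported in a tubular neighborhood; this produces $\phi = 0$, $d\phi = 0$ and $\operatorname{Hess}(\phi)(\nu,\nu) > 0$ along $\spt T$ while retaining $\phi > 0$ off $\spt T$.

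Setting $g_\epsilon = e^{2\epsilon \phi} g$, for $\epsilon$ small we have $g_\epsilon \in \cG$, and for any $T' \in [\alpha]$,
\[
    \MM_{g_\epsilon}(T') = \int e^{n\epsilon \phi}\, d\|T'\|_g \geq \MM_g(T') \geq \MM_g(T) = \MM_{g_\epsilon}(T),
\]
so $T$ minimizes in $g_\epsilon$. If equality is achieved by some $T'$, then $\phi \equiv 0$ on $\spt T'$, forcing $\spt T' \subseteq \spt T$. Since $\partial T' = 0$ and $\reg T$ is connected by Lemma \ref{lemm:reg.connected}, the Constancy Theorem gives $T' \mres \reg T = k\, T \mres \reg T$ for some $k \in \ZZ$; as $\sing T$ has Hausdorff dimension $\leq n-7 < n$, this extends to $T' = kT$ on all of $N$, and matching homology classes (using that $H_n(N, \ZZ)$ is torsion-free since $N$ is closed and orientable) pins $k=1$. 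For the strict stability statement, the conditions $\phi|_{\spt T} = 0$ and $d\phi|_{\spt T} = 0$ ensure via the conformal change formula that $H_{g_\epsilon} = 0$ on $\spt T$ (so $\spt T$ remains minimal) and that
\[
    \operatorname{Ric}_{g_\epsilon}(\nu, \nu) = \operatorname{Ric}_g(\nu, \nu) - n\epsilon \operatorname{Hess}(\phi)(\nu, \nu)
\]
along $\spt T$, while the induced metric and $|A|^2$ on $\spt T$ are unchanged. The Jacobi quadratic form in $g_\epsilon$ therefore gains a positive potential $n\epsilon \operatorname{Hess}(\phi)(\nu,\nu) > 0$, upgrading stability in $g$ to strict stability in $g_\epsilon$.

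The hard part is the uniqueness argument, not the stability one: one must bridge from the soft conclusion $\spt T' \subseteq \spt T$ to the hard conclusion $T' = T$, and this is precisely where connectedness of $\reg T$ (Lemma \ref{lemm:reg.connected}) is indispensable so that the Constancy Theorem forces the multiplicity to be globally constant across $\reg T$. The second-order vanishing of $\phi$ along smooth $\spt T$ is standard and does not need to extend to singular configurations, since strict stability is only required (and only makes sense) in the smooth case.
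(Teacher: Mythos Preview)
Your proof is correct and takes essentially the same approach as the paper: a conformal perturbation $g' = (1+\eps u)g$ (the paper) versus $g' = e^{2\eps\phi}g$ (you) by a nonnegative function vanishing exactly on $\spt T$, with $\operatorname{Hess}(\nu,\nu) > 0$ along $\spt T$ in the smooth case. Your uniqueness step via the constancy theorem on the connected $\reg T$ together with torsion-freeness of $H_n(N,\ZZ)$ is a more explicit version of what the paper compresses into ``connectedness of $\spt T$ and unique continuation,'' and your stability computation spells out what the paper leaves implicit.
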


\begin{lemma} \label{lemm:main.homology.main.lemma}
	Fix $N$ as in Theorem \ref{theo:main.homology.baire}, $g \in \operatorname{Met}(N)$, and assume that $T \in \II^1_n(N)$ is uniquely minimizing in $[\alpha] \in H_n(N, \ZZ)$ with respect to $g$ and $\spt T$ is connected. For every neighborhood $\cG \subset \operatorname{Met}(N)$ of $g$, there exists $g' \in \cG$ and a $T' \in \II^1_n(N, g')$ which is minimizing in $[\alpha]$ with respect to $g'$ and with $\spt T$ smooth and connected.
\end{lemma}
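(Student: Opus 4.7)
My plan is to adapt the iterative-improvement strategy of Lemma \ref{lemm:main.plateau.main.lemma} by converting the closed homology problem on $N$ into a Plateau problem on a ball complement, and then reattaching via a small ambient diffeomorphism. By the Simons--Federer interior regularity theorem and Lemma \ref{lemm:reg.connected}, $T = \llbracket M \rrbracket$ for a smooth connected minimal hypersurface $M \subset N$ with singular set $E := \spt T \setminus M$ compact of Hausdorff dimension $\leq n - 7$. Since $T \in \II^1_n(N)$ has $\partial T = 0$ and $[\alpha] \neq 0$, $M$ is two-sided.

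First I would pick a regular point $p \in M$ and a small geodesic ball $B = B_r(p)$ with $\bar B \cap E = \emptyset$, $\partial B$ meeting $M$ transversely, and $M \cap \bar B$ a smooth disk. Set $\Omega := N \setminus B$ and $\Gamma := M \cap \partial B$; then $\Gamma$ is a smooth two-sided $(n-1)$-sphere in $\partial \Omega$, null-homologous in $N$. A standard cut-and-paste argument using uniqueness of $T$ in $[\alpha]$ shows that $T|_\Omega \in \II^1_n(\Omega)$ is uniquely minimizing among representatives of its relative class in $H_n(\Omega, \partial \Omega)$ with boundary $\llbracket \Gamma \rrbracket$. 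I would then apply the manifold-level version of Lemma \ref{lemm:main.plateau.main.lemma} to $(\bar\Omega, g, \Gamma, T|_\Omega)$: the argument goes through since Theorem \ref{theo:high.density.packing} is already stated in the manifold setting and a small geodesic ball is convex, so competitors stay in $\bar\Omega$. After finitely many density-drop iterations, this produces a boundary $\Gamma' \subset \partial B$ which is $C^\infty$-close to $\Gamma$ and a smooth, connected, minimizing $T'_\Omega \in \II^1_n(\Omega)$ with $\partial T'_\Omega = \llbracket \Gamma' \rrbracket$ in the corresponding relative class.

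To lift $T'_\Omega$ to a smooth closed $[\alpha]$-minimizer in $(N, g')$ for some $g' \in \cG$, I would construct an ambient diffeomorphism $\phi : N \to N$ supported in a slightly enlarged ball $B' \supset B$, equal to the identity outside $B'$, with $\phi|_{\partial B}$ carrying $\Gamma$ to $\Gamma'$. Since $\Gamma'$ is $C^\infty$-close to $\Gamma$, $\phi$ may be chosen $C^\infty$-close to the identity, so $g' := \phi_* g \in \cG$. Set $D := \phi(M \cap B)$, a smooth disk in $B$ which is minimizing with respect to $g'$ (since $\phi$ is an isometry $(N, g) \to (N, g')$) and has $\partial D = \Gamma'$, and define $T' := T'_\Omega + \llbracket D \rrbracket$ with orientations matched so $\partial T' = 0$. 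Then $T'$ is smooth, connected, multiplicity-one, and represents $[\alpha]$. The proof concludes once $T'$ is shown to be $g'$-minimizing in $[\alpha]$, at which point Lemma \ref{lemm:main.homology.uniqueness.perturbation} delivers the desired statement.

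The hard part will be this last gluing step: producing $g' \in \cG$ for which the composite $T'_\Omega + \llbracket D \rrbracket$ is globally $[\alpha]$-minimizing in $(N, g')$, not merely minimizing on each of $\Omega$ and $B$ separately. The delicate point is that the metric perturbation is necessarily supported on a collar $B' \supset B$ straddling $\partial B$, so one must verify that no global competitor in $[\alpha]$ exploits the overlap region $B' \setminus B$ to undercut the combined mass. A clean way to ensure this is to couple the Plateau foliation on $\Omega$ with the cap-off diffeomorphism on $B'$ in a single iteration, absorbing any small minimality discrepancy introduced by each gluing via a further small metric perturbation (again using Lemma \ref{lemm:main.homology.uniqueness.perturbation}), and terminating once $\cD$ drops below $\Theta^*_n$, at which point smoothness follows from Allard's theorem.
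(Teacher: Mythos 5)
Your proposal has a genuine gap, and it is located exactly where you flag ``the hard part'': the gluing step cannot work as described. You set $g' := \phi_* g$ for a diffeomorphism $\phi$ supported in a ball and isotopic to the identity, so $\phi : (N,g) \to (N,g')$ is an isometry and $\phi_\#[\alpha] = [\alpha]$. Consequently the mass-minimization problem in $[\alpha]$ with respect to $g'$ is literally the $g$-problem relabeled: its unique minimizer is $\phi_\# T$, which is still singular. Your candidate $T' = T'_\Omega + \llbracket D \rrbracket$ is smooth, so it cannot be $g'$-minimizing, and no amount of ``absorbing a small minimality discrepancy'' can fix this --- Lemma \ref{lemm:main.homology.uniqueness.perturbation} only promotes an already-minimizing current to a uniquely minimizing one (by making the metric strictly heavier off its support); it cannot make a non-minimizing current minimizing. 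The discrepancy is not small in the relevant sense: the gap between the mass of $T'$ and the minimal mass in $[\alpha]$ w.r.t.\ $g'$ is controlled from below by the $g$-problem itself and does not tend to zero as you shrink the perturbation. More generally, any construction in which the new metric is a pullback of $g$ by a diffeomorphism is doomed for this lemma, since such metrics preserve the singular structure of the minimizer.

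The paper's proof avoids cutting and regluing entirely. Instead of manufacturing a boundary to foliate, it generates the required monotone family of \emph{global homological minimizers} by genuine one-sided conformal metric perturbations supported in a Fermi cylinder $C_\rho$ around a regular point (Proposition \ref{prop:deformation.manifold}): the factors $1 - \tau K^{-1}\chi(x)\zeta(s-h_{K,k-1}(x))$ make the metric heavier on one side of the previous minimizer, forcing each successive minimizer $T_{K,k}$ strictly to one side with a quantitative separation lower bound (condition (f), proved by a mean-curvature barrier and Harnack argument). Theorem \ref{theo:high.density.packing} is then applied to the restrictions $T_{K,k}\mres(N\setminus \bar C_{3\rho/2})$, with $N \setminus \bar C_\rho$ playing the role of $\breve N$, yielding the density drop $\cD(T_{K,k^*}) \le \cD(T) - \eta$; Lemma \ref{lemm:main.homology.uniqueness.perturbation} restores unique minimization and the process iterates finitely many times. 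If you wish to salvage your outline, the essential missing ingredient is precisely such a non-isometric, monotone metric deformation producing competitors that are minimizing for the \emph{global} homology problem at every stage; the cut-and-cap route does not supply this.
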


Let us first show how these lemmas imply the theorem.

\begin{proof}[Proof of Theorem \ref{theo:main.homology.baire} assuming Lemmas \ref{lemm:main.homology.uniqueness.perturbation}, \ref{lemm:main.homology.main.lemma}]
	For $[\alpha] \in H_n(N, \ZZ)$ let
	\[ \operatorname{Met}^*_{[\alpha]}(N) \subset \operatorname{Met}(N) \]
	consist of all $g \in \operatorname{Met}(N)$ with the following defining property: \textbf{if} (with respect to this $g$) there exists a minimizing $T \in \II_n(N)$ in $[\alpha]$ with respect to $g$ satisfying: 
	\begin{enumerate}
		\item[(C$_0)$] $\spt T$ is connected, 
	\end{enumerate}
	\textbf{then} $T$ also satisfies (with respect to the same $g$):
	\begin{enumerate}
		\item[(C$_1$)] $\spt T$ is a smooth hypersurface, \textit{and}
		\item[(C$_2$)] $\spt T$ is strictly stable, \textit{and}
		\item[(C$_3$)] $T$ is \textit{uniquely} minimizing in $[\alpha]$. 
	\end{enumerate}
	Note that if a single minimizing $T \in \II_n(N)$ in $[\alpha]$ satisfies (C$_0$)-(C$_3$) with respect to $g$, then $g \in \operatorname{Met}^*_{[\alpha]}(N)$ due to (C$_3$).
	
	\begin{claim} \label{clai:main.homology.baire.open}
		$\operatorname{Met}^*_{[\alpha]}(N)$ is open in $\operatorname{Met}(N)$.
	\end{claim}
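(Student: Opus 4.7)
The plan is to mirror the argument of Claim \ref{clai:main.plateau.baire.open}, now in the closed-manifold homological setting. I argue by contradiction: suppose $g \in \operatorname{Met}^*_{[\alpha]}(N)$ and there exists a sequence $g_i \to g$ in the smooth topology with $g_i \notin \operatorname{Met}^*_{[\alpha]}(N)$. For each $i$, select a minimizer $T_i \in \II_n(N)$ in $[\alpha]$ with respect to $g_i$ satisfying (C$_0$) but failing at least one of (C$_1$), (C$_2$), (C$_3$). Since the infimum of $g_i$-mass over $[\alpha]$ is uniformly bounded, after passing to a subsequence (not relabeled) the compactness theorem for integral currents yields $T_i \rightharpoonup T$ with $T \in \II_n(N)$ minimizing in $[\alpha]$ with respect to $g$. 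As $[\alpha] \neq 0$, we have $T \neq 0$, and by continuity of the mass infimum in the metric together with lower semicontinuity of mass under weak convergence, $\MM_g(T_i) \to \MM_g(T)$.

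Next I verify that $\spt T$ is connected, so that the defining hypothesis on $g$ applies to $T$. Combining the weak convergence $T_i \rightharpoonup T$, the mass convergence above, and the uniform lower density bounds from the approximate monotonicity formula for minimizers on $(N, g_i)$ yields Hausdorff convergence $\spt T_i \to \spt T$ in $N$. If $\spt T$ decomposed as $\spt T^a \sqcup \spt T^b$ with $d_g(\spt T^a, \spt T^b) > 0$, then for small $\epsilon > 0$ the tubular neighborhoods $U_\epsilon(\spt T^a)$ and $U_\epsilon(\spt T^b)$ would be disjoint, and Hausdorff convergence would force $\spt T_i$ to meet both, contradicting the connectedness of $\spt T_i$. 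Hence $\spt T$ is connected, and $g \in \operatorname{Met}^*_{[\alpha]}(N)$ gives that $T$ satisfies (C$_1$)--(C$_3$).

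It remains to inherit (C$_1$)--(C$_3$) from $T$ back to $T_i$ for $i$ large. Since $T$ is a smooth minimal hypersurface $M$ counted with some constant multiplicity $k \in \NN$ by (C$_1$), Allard's (multiplicity-$k$) regularity theorem together with the upper semicontinuity of density and the mass convergence above upgrades $T_i \rightharpoonup T$ to smooth convergence of $T_i$ as a multiplicity-$k$ graph over $M$ in the metrics $g_i$; this gives (C$_1$) for $T_i$ eventually. Strict stability of $M$ with respect to $g$ is an open condition under joint $C^2$-perturbation of the metric and the underlying hypersurface, so it propagates to $T_i$, giving (C$_2$). If (C$_3$) failed for infinitely many $i$, one could select a distinct minimizer $\tilde T_i \neq T_i$ in $[\alpha]$ with respect to $g_i$; the uniqueness (C$_3$) of $T$ forces $\tilde T_i \rightharpoonup T$ as well, and by the same argument $\tilde T_i$ is a smooth multiplicity-$k$ graph over $M$. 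Writing $T_i, \tilde T_i$ as normal graphs of functions $u_i, \tilde u_i$ on $M$, the rescaled differences $(u_i - \tilde u_i)/\| u_i - \tilde u_i \|_{C^0(M)}$ subconverge by standard Schauder estimates to a nontrivial Jacobi field on the closed hypersurface $M$, contradicting (C$_2$). The main technical subtlety is the connectedness propagation in the second paragraph, which is essential to invoke the defining hypothesis on $g$ and uses mass convergence of minimizers rather than just weak convergence.
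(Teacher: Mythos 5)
Your argument is correct and takes essentially the same route as the paper's proof of Claim \ref{clai:main.homology.baire.open}: contradiction, weak compactness of minimizers, propagation of (C$_0$) to the limit via Hausdorff convergence of supports and lower density bounds, the defining hypothesis on $g$ applied to the limit, and then propagation of (C$_1$)--(C$_3$) back to the sequence via graphical convergence, openness of strict stability, and a Jacobi-field argument for uniqueness. The one imprecision is the appeal to a ``multiplicity-$k$ Allard theorem,'' which does not exist as such; the single-graph, multiplicity-$k$ smooth convergence you need follows instead (as in the paper) from locally decomposing the minimizers into nested minimizing boundaries and invoking De Giorgi's regularity, with connectedness of $\spt T_i$ together with two-sidedness of $\spt T$ ruling out more than one geometric sheet.
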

	\begin{proof}[Proof of claim]
		We argue by contradiction. Suppose that $g \in \operatorname{Met}^*_{[\alpha]}(N)$ and that there exists a sequence (where the index is suppressed) $g' \to g$ with $g' \in \operatorname{Met}(N) \setminus \operatorname{Met}^*_{[\alpha]}(N)$. 
		
		Then, for each $g'$ there exists an associated minimizing $T' \in \II_n(N)$ in $[\alpha]$ satisfying (C$_0$) but failing (C$_1$) or (C$_2$) or (C$_3$), with respect to $g'$. By passing to subsequences, we can assume that $T' \rightharpoonup T \in \II_n(N)$ minimizing in $[\alpha]$, with respect to $g$. Note that since each $T'$ satisfies (C$_0$), $T$ does too by the 
		upper semicontinuity of density, and thus $T$ also satisfies (C$_1$)-(C$_3$), with respect to $g$, since $g \in \operatorname{Met}^*_{[\alpha]}(N)$.
		
		It follows from (C$_1$) on $T$ together with standard regularity results for minimizers that, up to discarding finitely many terms in the sequence,  (C$_1$) must hold for all $T'$ and that $\spt T' \to \spt T$ locally graphically possibly with multiplicity. (One can locally decompose $T'$ into boundaries (\cite[Corollary 27.8]{Simon:GMT}) and invoke De Giorgi's regularity for minimizing boundaries (\cite{DeGiorgi:minimizing-regularity}).) But since the normal bundle of $\spt T$ is trivial, $\spt T' \to \spt T$ smoothly and with multiplicity one. We conclude by showing neither (C$_2$) nor (C$_3$) can fail for infinitely many $T'$, with respect to their $g'$. 
		
		First assume that infinitely many $T'$ fail (C$_2$), with respect to their $g'$. Pass to such a subsequence. Each $\spt T'$ carries nontrivial Jacobi fields, with respect to its $g'$. These nontrivial Jacobi fields converge (after the standard normalization procedure) smoothly to nontrivial Jacobi fields on $\spt T$, with respect to $g$, given the smooth convergence $\spt T' \to \spt T$ and $g' \to g$. Thus $\spt T$ is degenerate with respect to $g$, a contradiction to $T$ satisfying (C$_2$).
		
		So it must be that infinitely many $T'$ fail (C$_3$), with respect to their $g'$. Pass to this subsequence. Let $\tilde T'$ be any competing minimizer with respect to $g'$ that is distinct from $T$. We must have $\tilde T' \rightharpoonup T$ after passing to a further subsequence because $T$ satisfies (C$_3$). By the argument above (it only relies on the limit $T$ satisfying (C$_1$)) the convergence of the supports is again smooth and graphical with multiplicity one. Using the standard construction from the setting of the graphical convergence $\spt T'$, $\spt \tilde T' \to \spt T$, and $g' \to g$, we produce a nontrivial Jacobi field on $\spt T$ with respect to $g$. Thus $\spt T$ is degenerate, again a contradiction to $T$ satisfying (C$_2$).

		Since all cases lead to a contradiction, $\operatorname{Met}^*_{[\alpha]}(N)$ is open. 
	\end{proof}
	
	\begin{claim} \label{clai:main.homology.baire.dense}
		$\operatorname{Met}^*_{[\alpha]}(N)$ is dense in $\operatorname{Met}(N)$.
	\end{claim}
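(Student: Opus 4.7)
The plan is to adapt the proof of Claim~\ref{clai:main.plateau.baire.dense} to the closed-manifold setting. Fix $g \in \operatorname{Met}(N)$ and an open neighborhood $\cG \subset \operatorname{Met}(N)$ of $g$; the goal is to produce a metric $g''' \in \cG \cap \operatorname{Met}^*_{[\alpha]}(N)$.

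The heart of the argument will be a three-step perturbation, which applies as soon as one has located a multiplicity-one $T \in \II^1_n(N)$ minimizing in $[\alpha]$ with respect to some metric in $\cG$ close to $g$ and having connected support. First, I would apply Lemma~\ref{lemm:main.homology.uniqueness.perturbation} to pass to a nearby $g' \in \cG$ with respect to which $T$ is uniquely minimizing. Then I would apply Lemma~\ref{lemm:main.homology.main.lemma} to pass to a further $g'' \in \cG$ together with a \emph{smooth}, connected $T'' \in \II^1_n(N)$ minimizing in $[\alpha]$ with respect to $g''$. Finally, I would apply Lemma~\ref{lemm:main.homology.uniqueness.perturbation} once more to pass to $g''' \in \cG$ with respect to which $T''$ is uniquely minimizing and (being smooth) strictly stable. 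The metric $g'''$ then lies in $\operatorname{Met}^*_{[\alpha]}(N)$, since its unique minimizer $T''$ in $[\alpha]$ manifestly satisfies (C$_0$)--(C$_3$).

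The main obstacle will be the preliminary step of locating such a multiplicity-one $T \in \II^1_n(N)$ with connected support: a given minimizer in $[\alpha]$ need not be multiplicity one (for instance, when $[\alpha]$ is a nontrivial multiple in $H_n(N,\ZZ)$) or have connected support. The analogous issue in the Plateau proof was handled by induction on the number of boundary components $\#\cP$ combined with a Hardt--Simon-type nested decomposition; in the closed-manifold setting there is no natural induction parameter of that type. I would instead induct on a complexity measure such as the lexicographic pair consisting of the sum of multiplicities and the number of connected components in the decomposition $T = \sum_i k_i \llbracket M_i \rrbracket$ of an $[\alpha]$-minimizer into connected multiplicity-one summands. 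The base case is when $T$ itself is a single connected multiplicity-one summand, in which case the three-step perturbation above applies directly. In the inductive step, each summand $\llbracket M_i \rrbracket$ is itself minimizing in its own class and of strictly smaller complexity, so the inductive hypothesis yields a perturbation making it smooth and uniquely minimizing in $[M_i]$; a further application of Lemma~\ref{lemm:main.homology.main.lemma} to that summand then pushes it off the ambient minimizer and produces a new representative of $[\alpha]$ of strictly smaller complexity. Finitely many iterations reduce to the base case, at which point the three-step perturbation concludes the proof.
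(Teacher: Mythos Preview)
Your three-step perturbation (apply Lemma~\ref{lemm:main.homology.uniqueness.perturbation}, then Lemma~\ref{lemm:main.homology.main.lemma}, then Lemma~\ref{lemm:main.homology.uniqueness.perturbation} again) is exactly what the paper does, and is correct once one has a multiplicity-one minimizer with connected support in hand.

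The gap is in your ``preliminary step.'' Your proposed induction on a complexity measure attached to a decomposition $T = \sum_i k_i \llbracket M_i \rrbracket$ is not well-posed: the complexity is a function of a \emph{particular} minimizer, not of the class $[\alpha]$, so there is no clean induction parameter. More seriously, your inductive step does not go through. Perturbing the metric to improve one summand $\llbracket M_i \rrbracket$ in its own class $[M_i]$ gives you no control over what the new $[\alpha]$-minimizer looks like; it need not decompose with fewer pieces or smaller total multiplicity, and your appeal to Lemma~\ref{lemm:main.homology.main.lemma} ``pushing it off the ambient minimizer'' does not by itself force a drop in complexity for the global $[\alpha]$-minimizer.

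The paper avoids all of this with a much simpler observation you are missing. The defining property of $\operatorname{Met}^*_{[\alpha]}(N)$ is conditional on (C$_0$): one only has to deal with minimizers $T$ whose support is \emph{connected}. By Lemma~\ref{lemm:reg.connected} and the constancy theorem, such a $T$ has constant multiplicity $\ell$, so $\hat T := \ell^{-1} T \in \II_n^1(N)$ is automatically a multiplicity-one minimizer with connected support in the sub-class $[\hat\alpha] := \ell^{-1}[\alpha]$. One then runs your three-step perturbation on $(\hat T, [\hat\alpha])$ to produce $(g', \hat T')$ with $\hat T'$ satisfying (C$_0$)--(C$_3$) in $[\hat\alpha]$. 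Finally, $\ell \hat T'$ is shown to be minimizing in $[\alpha]$ via \cite[Theorem 5.10]{Federer:real-chains}, inherits (C$_0$)--(C$_2$) trivially from $\hat T'$ since they share the same support, and inherits (C$_3$) via White's decomposition theorem \cite{White:boundary-regularity-multiplicity}. Note that (C$_1$)--(C$_3$) in the homology setting are statements about $\spt T$ and uniqueness, not about multiplicity, so there is no need to ever produce a multiplicity-one minimizer in $[\alpha]$ itself.
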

	\begin{proof}[Proof of claim]
		Let $g \in \operatorname{Met}(N)$. We show there exist $g' \in \operatorname{Met}^*_{[\alpha]}(N)$ arbitrarily close to $g$. We assume $g \not \in \operatorname{Met}^*_{[\alpha]}(N)$, otherwise there is nothing to show. 
		
		Let $T$ be a minimizer in $[\alpha]$ satisfying (C$_0$) but failing (C$_1$) or (C$_2$) or (C$_3$), with respect to $g$. The multiplicity $\ell$ of $T$ is a constant by Lemma \ref{lemm:reg.connected} and the constancy theorem. Then $\hat T := \ell^{-1} T \in \II^1_n(N)$ is minimizing (\cite[Lemma 33.4]{Simon:GMT}) in $[\hat \alpha] := \ell^{-1} [\alpha]$ ($\in H_n(N, \ZZ)$ due to the multiplicity of $T$ being $\ell$) with respect to $g$ and satisfies (C$_0$) since $\spt \hat T = \spt T$. We'll perturb $(g, \hat T) \mapsto (g', \hat T')$ so that $\hat T'$ satisfies (C$_0$)-(C$_3)$ with respect to $g'$. Without loss of generality we'll take $T$ to fail (C$_1$), otherwise we skip steps (i) and (ii) below and start with step (iii) and with $(g', \hat T') = (g, \hat T)$:
		\begin{enumerate}
			\item[(i)] Apply Lemma \ref{lemm:main.homology.uniqueness.perturbation} to perturb $g$ (not relabeled) so that $\hat T$ satisfies (C$_0$) and (C$_3$) with respect to $g$. 
			\item[(ii)] Apply Lemma \ref{lemm:main.homology.main.lemma} to perturb $(g, \hat T) \mapsto (g', \hat T')$ so that $\hat T' \in \II_n^1(N)$ satisfies (C$_0$) and (C$_1$) with respect to $g'$. 
			\item[(iii)] Re-apply Lemma \ref{lemm:main.homology.uniqueness.perturbation} to perturb $g'$ (not relabeled) so that $\hat T' \in \II_n^1(N)$ satisfies (C$_0$), (C$_1$), (C$_2$), (C$_3$) with respect to $g'$. 
		\end{enumerate}
		
		It remains to show that $\ell \hat T'$ is minimizing in $[\alpha]$ and satisfies (C$_0$)-(C$_3)$ with respect to $g'$, too. As we've discussed, this shows that $g' \in \operatorname{Met}^*_{[\alpha]}(N)$.
		
		The fact that $\ell \hat T'$ is minimizing follows from \cite[Theorem 5.10]{Federer:real-chains} (see also \cite[Corollary 3]{White:boundary-regularity-multiplicity}). Next, $\ell \hat T'$ clearly inherits (C$_0$)-(C$_2$) from $\hat T'$. Applying White's decomposition theorem \cite{White:boundary-regularity-multiplicity} and using the validity of (C$_3$) for $\hat T'$, we obtain that (C$_3$) holds for $\ell \hat T'$, too. Thus, $\ell \hat T'$ satisfies (C$_0$)-(C$_3$). 
	\end{proof}
	
	It follows from the countable nature of $H_n(N, \ZZ)$ and Claims \ref{clai:main.homology.baire.open}, \ref{clai:main.homology.baire.dense} that
	\[ \operatorname{Met}^*(N) :=\bigcap_{[\alpha] \in H_n(N, \ZZ)} \operatorname{Met}^*_{[\alpha]}(N) \]
	is comeager in $\operatorname{Met}(N)$. 
	
	It remains to prove that $\operatorname{Met}^*(N)$ has the desired smooth minimization property. Let $g \in \operatorname{Met}^*(N)$ and suppose that $T \in \II_n(N)$ is minimizing with respect to $g$ in some homology class. Let $C$ denote any component of $\spt T$. Note that $T \mres C \in \II_n(N)$, and that its multiplicity $\ell$ is constant due to Lemma \ref{lemm:reg.connected} and the constancy theorem. Therefore, $T' := \ell^{-1} T \mres C \in \II^1_n(N)$ and it is also minimizing (\cite[Lemma 33.4]{Simon:GMT}). By construction, $T'$ satisfies (C$_0$). Let its homology class be $[\alpha]$. It follows from $g \in \operatorname{Met}^*(N) \subset \operatorname{Met}^*_{[\alpha]}(N)$ and (C$_1$) in the definition of $\operatorname{Met}^*_{[\alpha]}(N)$ that $\spt T' = C$ is smooth. Iterating over all components, we get that $\spt T$ is smooth.
\end{proof}

The proofs of Lemmas \ref{lemm:main.homology.uniqueness.perturbation} and \ref{lemm:main.homology.main.lemma} are found at the end of the section. On a first pass through the paper, the reader can safely skip the statement and proof of the technical proposition below and go directly to the proof of the lemmas.

\begin{proposition} \label{prop:deformation.manifold}
	Let $N$, $g$ be as in Lemma \ref{lemm:main.homology.main.lemma}. Assume that $T \in \II_n^1(N)$ has $\partial T = 0$, is uniquely minimizing with respect to $g$, has $\spt T$ connected, and that $\cG \subset \operatorname{Met}(N)$ is a neighborhood of $g$. Below, geometric quantities are with respect to $g$ unless otherwise noted.
	
	Fix $p \in \reg T$. If $D_r$ denotes the radius-$r$ geodesic ball in $\reg T$ centered at $p$, take $\rho > 0$ to be such that $D_{3\rho}$ does not intersect the cut locus of $p$ or $\sing T$, and $D_{2\rho}$ has orientation-preserving Fermi coordinates
	\[ \Phi : D_{2\rho} \times (-\sigma, \sigma) \xrightarrow{\approx} C_{2\rho} \subset N \]
	for some $\sigma > 0$. More generally, we set:
	\[ C_r := \Phi(D_r\times (-\sigma, \sigma)) \text{ for } r \in (0, 2\rho]. \]
	Then, for all sufficiently small $\tau > 0$ and all $K \in \NN$, there exist metrics
	\[ g =: g_{K,0}, g_{K,1}, \ldots, g_{K,K} \in \cG, \]
	and integral $n$-currents
	\[ T =: T_{K,0}, T_{K,1}, \ldots, T_{K,K} \in \II_n(N), \]
	that are all homologous to $T$ and minimizing with respect to the corresponding metrics, such that, with the convention $h_{K,0} := 0$, the following hold for all $k = 1, \ldots, K$:
	\begin{enumerate}
		\item[(a)] $g_{K,k} \equiv g$ on $N \setminus C_\rho$,
		\item[(b)] $\Vert g_{K,k} - g_{K,k-1} \Vert_{C^{m,\theta}(N)} \leq C_{1,m,\theta} \tau K^{-1} $ for all $m \in \NN^*$, $\theta \in (0,1)$,
		\item[(c)] $\lim_{\tau \to 0} \dist_{\cF}(T_{K,k}, T) = 0$,\footnote{Here, $\dist_{\cF}$ is the flat distance in $N$, which is denoted $\cF_N$ in \cite[4.1.24]{Federer:GMT}.}
		\item[(d)] $\Phi^{-1}(\spt T_{K,k} \cap C_{2\rho}) = \Graph_{D_{2\rho}} h_{K,k}$ with $\lim_{\tau \to 0} \Vert h_{K,k} \Vert_{C^{m,\alpha}(D_{2\rho})} = 0$,
		\item[(e)] $T_{K,k} \in \II_n^1(N)$ and $\spt T_{K,k}$ is connected,
		\item[(f)] $\inf_{D_{3\rho/2}} (h_{K,k} - h_{K,k-1}) \geq \mu_1 \tau K^{-1}$.
	\end{enumerate}
	We emphasize that $C_{1,m,\theta}$, $\mu_1 > 0$ are independent of $K, k, \tau$. Above, we suppressed the dependence on $\tau$ from the notation of $g_{K,k}$, $T_{K,k}$, $h_{K,k}$ for simplicity.
\end{proposition}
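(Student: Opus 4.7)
The plan is to discretize a smooth one-parameter family of metrics $(g_s)_{s \in [0, \tau]}$ with $g_0 = g$ and $g_s \equiv g$ outside $C_\rho$, together with a selection of $g_s$-minimizers $(T_s)_{s \in [0,\tau]}$ in the homology class of $T$, and then set $g_{K,k} := g_{k\tau/K}$, $T_{K,k} := T_{k\tau/K}$. The choice of $g_s$ must be delicate enough that, even though the metric perturbation is supported strictly inside $C_\rho$, the corresponding minimizer $T_s$ is forced to move monotonically upward (in the Fermi $\partial_t$ direction) on the entire larger set $\bar D_{3\rho/2}$.

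Concretely, pick a bump $\chi : D_{2\rho} \to [0,1]$ with $\chi \equiv 1$ on $\bar D_{\rho/2}$ and supported in $D_\rho$, and a smooth function $\zeta : (-\sigma, \sigma) \to \RR$ with $\zeta(0) = 0$, $\zeta'(0) = 1$, supported in $(-\sigma/2, \sigma/2)$. Set $\psi(\Phi(q,t)) := \chi(q)\zeta(t)$, extended by zero outside $C_\rho$, and $g_s := (1 + s\psi)^{2/n} g$; properties (a) and (b) are then immediate. For each $s$, choose $T_s$ to be a $g_s$-minimizer in the homology class of $T$, which exists by GMT compactness. Unique $g$-minimality of $T$ and upper-semicontinuity of density under flat convergence yield $T_s \rightharpoonup T$ as $s \to 0$ and $T_s \in \II_n^1(N)$ for small $s$, giving (c) and the multiplicity-one portion of (e). Allard's interior regularity theorem applied on compact subsets of $\reg T \supset \bar D_{2\rho}$ upgrades this to smooth convergence $T_s \to T$, hence (d) with $\Vert h_s \Vert_{C^{m,\alpha}(D_{2\rho})} \to 0$. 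Connectedness of $\spt T_s$ follows as in Lemma \ref{lemm:plateau.minimizing.family}: any hypothetical decomposition of $T_s$ into two nontrivial summands with disjoint supports would, after passing to weak limits, yield two nonzero $g$-minimizers summing to $T$ and with disjoint interiors by Lemma \ref{lemm:disjoint.or.cross}, contradicting Lemma \ref{lemm:reg.connected}.

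The central step is monotonicity (f), namely a uniform lower bound $\partial_s h_s \geq \mu_1 > 0$ on $\bar D_{3\rho/2}$ for all $s \in [0, \tau]$ with $\tau$ small. The plan is to analyze the formal linearization: the normalized height $\phi_0 := \lim_{s \to 0} h_s / s$ should be a Jacobi field on $\reg T$ solving $L_T \phi_0 = F$, where $L_T = \Delta_T + |A_T|^2 + \Ric(\nu, \nu)$ is the Jacobi operator and $F$ is proportional to $-\partial_\nu \psi|_T = -\chi(q) \leq 0$, supported in $\bar D_\rho$ and not identically zero. Stability of $T$ (since it is minimizing) and connectedness of $\reg T$ (Lemma \ref{lemm:reg.connected}), together with the strong maximum principle for $L_T$, should force $\phi_0 > 0$ throughout $\reg T$; compactness of $\bar D_{3\rho/2} \subset \reg T$ then gives $\inf_{\bar D_{3\rho/2}} \phi_0 \geq 2\mu_1$. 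Standard $C^{m,\alpha}$-Schauder estimates and continuity in $s$ upgrade this to $\partial_s h_s \geq \mu_1$ uniformly on $\bar D_{3\rho/2}$ for small $\tau$; integration over $[(k-1)\tau/K, k\tau/K]$ then yields (f).

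The main obstacle is that $T$ is only assumed uniquely minimizing, not strictly stable, so $L_T$ may have nontrivial kernel spanned by a positive first eigenfunction $\varphi_1 > 0$. In that degenerate case the Fredholm condition $\int_T F \varphi_1 = 0$ fails for our $F$, so the naive expansion $h_s = s \phi_0 + o(s)$ breaks down and the actual displacement of $T_s$ is realized along $\varphi_1$ at a slower (fractional power) rate. To handle this, I would bypass the linearization and argue strict ordering of $T_s$ and $T_{s'}$ (for $0 \leq s < s' \leq \tau$) directly: apply Lemma \ref{lemm:disjoint.or.cross} to the restrictions to $N \setminus \bar C_\rho$, where both currents are $g$-minimizing, and rule out interior crossings inside $\bar C_\rho$ via a local swap / nested-decomposition argument modeled on the proof of Lemma \ref{lemm:disjoint}, combined with the definite sign of the $g_s$-mean curvature of $T$ imposed by our choice of $\psi$. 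The uniform rate $\mu_1$ is then recovered from Harnack-type estimates for the positive Jacobi field $\varphi_1$ on the compact set $\bar D_{3\rho/2}$.
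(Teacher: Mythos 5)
There is a genuine gap, and it sits exactly where you yourself flagged trouble: the quantitative monotonicity (f). Your construction uses a single fixed conformal family $g_s = (1+s\psi)^{2/n}g$ with $\psi$ centered at the \emph{original} surface ($\zeta$ evaluated at the Fermi height $t$), sampled at $s = k\tau/K$. The paper instead \emph{recenters the bump at every step}: the conformal factor taking $g_{K,k-1}$ to $g_{K,k}$ is $1 - \tau K^{-1}\chi(x)\zeta(s - h_{K,k-1}(x))$, i.e.\ it vanishes precisely on the previous minimizer and has the sign structure $s\zeta(s)\ge 0$ \emph{relative to that minimizer}. This recentering is the key device. It gives, uniformly in $k$: (i) the soft one-sidedness $E_{K,k}\subsetneq E_{K,k-1}$, because $g_{K,k}$ is at least as heavy as $g_{K,k-1}$ exactly off $E_{K,k-1}$ (with your fixed $\psi$, once $T_s$ has moved to height $h_s\neq 0$ the new metric is lighter on \emph{both} sides of $\spt T_s$ in the slab between heights $h_s$ and $0$, so this comparison breaks); and (ii) a mean curvature lower bound $H \geq \mu_1'\tau K^{-1}\chi$ for $\spt T_{K,k-1}$ with respect to $g_{K,k}$, with $\mu_1'$ pinned by $\zeta'(0)$ because $\zeta$ is always evaluated at $0$ on the previous surface. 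A barrier argument then yields $\inf_{D_{\rho/2}}(h_{K,k}-h_{K,k-1}) \geq \tilde\mu_1'\tau K^{-1}$, and a blow-up plus strong-maximum-principle compactness argument propagates this to $D_{3\rho/2}$, which is (f).

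Your proposed repair for the degenerate (non-strictly-stable) case --- ``argue strict ordering directly'' plus ``Harnack-type estimates for the positive Jacobi field $\varphi_1$'' --- does not close this gap. A Harnack inequality compares the supremum and infimum of a positive (super)solution; it cannot manufacture a lower bound of order $\tau K^{-1}$ on the \emph{magnitude} of the displacement between consecutive steps, which is precisely what is at stake when $L_T$ has kernel: with a fixed family the selection $s\mapsto T_s$ need not even be continuous, and the consecutive gaps, governed by the nonlinear bifurcation structure, need not be comparable to $\tau/K$ uniformly in $k$. The paper's quantitative input is not linear at all: it is a nonlinear sub/supersolution comparison against a surface whose mean curvature (with respect to the new metric) is bounded below by $\mu_1'\tau K^{-1}$ on $D_{2\rho/3}$, and that uniform bound is only available because of the recentering. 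Parts (a)--(e) of your argument do match the paper's (compactness, unique minimality, upper semicontinuity of density, Allard regularity, and the connectedness argument as in Lemma \ref{lemm:plateau.minimizing.family}), but (f) requires the iterative, recentered construction.
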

\begin{proof}
Fix once and for all smooth and compactly supported 
\[ \chi : D_\rho \to \RR, \]
\[ \zeta : \RR \to \RR, \]
such that
\begin{equation} \label{eq:smale.chi.setup}
	\chi \geq 0 \text{ on } D_\rho, \; \chi \geq 1 \text{ on } D_{2\rho/3},
\end{equation}
\begin{equation} \label{eq:smale.zeta.setup}
	s \zeta(s) \geq 0 \text{ for all } s \in \RR, \; \zeta'(0) > 0,
\end{equation}
\begin{equation} \label{eq:smale.chi.zeta.support}
	\spt \left( (x, s) \mapsto \chi(x) \zeta(s + s_0) \right) \subset D_\rho \times (-\sigma, \sigma) \text{ for all } s_0 \in [-\tfrac12 \sigma, \tfrac12 \sigma].
\end{equation}

We need three lemmas whose proofs we postpone.

\begin{lemma} \label{lemm:locally.homologically.trivial}
	Let $N, g$ be as in Lemma \ref{lemm:main.homology.main.lemma} and assume that $T \in \II_n^1(N)$ has $\partial T = 0$ and is minimizing with respect to $g$. Then, there exists a smooth open neighborhood $\breve N$ of $\spt T$ such that $T = \partial \llbracket E \rrbracket \mres \breve N$, i.e., $T$ is a  minimizing boundary in $\breve N$.
\end{lemma}

\begin{lemma} \label{lemm:smale.abstract.perturbation.closeness}
	Let $N$ and $g$ be as in Lemma \ref{lemm:main.homology.main.lemma}. Assume that $T \in \II_n^1(N)$ has $\partial T = 0$, is uniquely minimizing with respect to $g$, and $\spt T$ is connected. For every $m \geq 2$, $\theta \in (0, 1)$, and $\eps > 0$, there is $\delta > 0$ so that if $g' \in \operatorname{Met}(N)$, $\Vert g' - g \Vert_{C^{m,\theta}(N)} \leq \delta$, and $T' \in \II_n(N)$ is minimizing with respect to $g'$ in the homology class of $T$, then:
	\begin{enumerate}
		\item[(a)] $\dist_{\cF}(T', T) \leq \eps$;
		\item[(b)] $\Phi^{-1}(\spt T' \cap C_{2\rho}) = \Graph_{D_{2\rho}} h'$ with $\Vert h' \Vert_{C^{m,\theta}(D_{2\rho})} \leq \eps$;
		\item[(c)] $T' \in \II_n^1(N)$ and $\spt T'$ is connected.
	\end{enumerate}
\end{lemma}

In the following lemma we state the result for a general manifold $D$ with a Riemannian metric. We will later apply this with the obvious choice of $\chi, \zeta$, and $D=D_\rho$ above with its induced metric.
	
\begin{lemma} \label{lemm:smale.abstract.product.smallness}
	Let $\tau \in \RR$ and $\zeta : \RR \to \RR$ and $\chi, h_1, \ldots, h_k : D \to \RR$ be smooth. Then, the smooth function $w : D \times \RR \to \RR$ defined by
	\[ w(x, s) := \prod_{j=1}^k (1 - \tau \chi(x) \zeta(s - h_j(x))), \]
	satisfies, for all $m \in \NN$, $\theta \in (0, 1)$, 
	\[ \Vert w - 1 \Vert_{C^{m,\theta}(D \times \RR)} \leq C_{0,m} \sum_{j=1}^k \frac{k!}{(k-j)!} \tau^j \Vert \chi \Vert_{C^{m,\theta}(D)}^j \Vert \zeta \Vert_{C^{m,\theta}(\RR)}^j (1+\max_{j=1, \ldots, k} \Vert h_j \Vert_{C^{m,\theta}(D)})^{mj}. \]
	Here $C_{0,m}$ only depends on $m$. 
\end{lemma}

We return to the proof of Proposition \ref{prop:deformation.manifold}. First, let $\breve N$ be as in Lemma \ref{lemm:locally.homologically.trivial} so that
\[ T = \partial \llbracket E \rrbracket \mres \breve N. \]
After possibly swapping orientations and thus replacing $E$ with its complementary Caccioppoli set in $\breve N$, we may assume that, in our Fermi coordinates,
\begin{equation} \label{eq:smale.product.smallness.orientation}
	\{ (x, s) : \chi(x) \zeta(s) > 0 \} \subset E.
\end{equation}
Then let $m \in \NN$, $m \geq 2$, $\delta > 0$ be chosen so that
\begin{equation} \label{eq:smale.product.smallness.metric}
	\Vert \tilde g - g \Vert_{C^{m,\theta}(N)} \leq \delta \implies \tilde g \in \cG.
\end{equation}
Shrink $\delta > 0$ enough so that Lemma \ref{lemm:smale.abstract.perturbation.closeness} applies with $\eps$ no larger than $\min \{ 1, \tfrac12 \sigma \}$ or so that the flat distance bound in (a) of the lemma, together with the implied Hausdorff distance (by the monotonicity formula), forces $\spt T' \subset \breve N$. Next choose $\gamma \in (0, 1)$ sufficiently small, depending on $\eps$, $\theta$, so that we have for $w : N \to \RR$
\begin{equation} \label{eq:smale.product.smallness.metric.conformal}
	\Vert w - 1 \Vert_{C^{m,\theta}(N)} \leq \gamma \implies \Vert w g - g \Vert_{C^{m,\theta}(N)} \leq \delta.
\end{equation}
With $C_{0,m}$ as in Lemma \ref{lemm:smale.abstract.product.smallness}, choose $\tau_1 > 0$ small enough that
\[ C_{0,m} \sum_{j=1}^\infty \tau_1^j \Vert \chi \Vert_{C^{m,\theta}(D)}^j \Vert \zeta \Vert_{C^{m,\theta}(\RR)}^j 2^{mj} \leq \gamma. \]
For all $k, K \in \NN$, $k \leq K$, 
\begin{align} \label{eq:smale.product.smallness}
	& C_{0,m} \sum_{j=1}^k \frac{k!}{(k-j)!} (\tau_1 K^{-1})^j \Vert \chi \Vert_{C^{m,\theta}(D)}^j \Vert \zeta \Vert_{C^{m,\theta}(\RR)}^j 2^{mj} \nonumber \\
	& \qquad = C_{0,m} \sum_{j=1}^k \frac{k (k-1) \cdots (k-(j-1))}{K^j} \tau_1^j \Vert \chi \Vert_{C^{m,\theta}(D)}^j \Vert \zeta \Vert_{C^{m,\theta}(\RR)}^j 2^{mj} \nonumber \\
	& \qquad \leq C_{0,m} \sum_{j=1}^k \tau_1^j \Vert \chi \Vert_{C^{m,\theta}(D)}^j \Vert \zeta \Vert_{C^{m,\theta}(\RR)}^j 2^{mj} \leq \gamma.
\end{align}
Without loss of generality, we can further shrink $\tau_1$ so that
\begin{equation} \label{eq:smale.pos.def.smallness}
	\tau_1 \sup_M \chi \sup_\RR \zeta < 1 
\end{equation}
and always suppose $\tau \in (0, \tau_1]$ going forward. 

Now let $K \in \NN$ be arbitrary and denote
\[ g_{K,0} := g, \; T_{K,0} := T, \; E_{K,0} := E, \; h_{K,0} := 0. \]
We will perform an iterative construction that guarantees (a)-(e) and, at first, a weaker version of (f): a weak monotonicity that we call (f').

\textbf{Perturbation $1$}. Define a metric $g_{K,1}$ on $\operatorname{img} \Phi = C_{2\rho}$ via
\[ (\Phi^* g_{K,1})(x, s) := (1 - \tau K^{-1} \chi(x) \zeta(s - h_{K,0}(x))) (\Phi^* g_{K,0})(x,s). \]
Using \eqref{eq:smale.chi.zeta.support}, \eqref{eq:smale.product.smallness.metric}, \eqref{eq:smale.product.smallness.metric.conformal}, \eqref{eq:smale.product.smallness} (with $k = 1$), and \eqref{eq:smale.pos.def.smallness}, 
\begin{enumerate}
	\item[(a)] $g_{K,1}$ extends to a metric on $N$ that coincides with $g$ off $C_\rho$, and
	\item[(b)] $\Vert g_{K,1} - g_{K,0} \Vert_{C^{m,\theta}(N)} \leq C_{1,m,\theta} \tau K^{-1}$.
\end{enumerate}
Take $T_{K,1} \in \II_n(N)$ to be any (for now) candidate minimizer with respect to $g_{K,1}$ homologous to $T$. By Lemma \ref{lemm:smale.abstract.perturbation.closeness} and our choices of parameters, we immediately obtain $\spt T_{K,1} \subset \breve N$ and all the following properties:
\begin{enumerate}
	\item[(c)] $\lim_{\tau \to 0} \dist_{\cF}(\spt T_{K,1}, \spt T) = 0$,
	\item[(d)] $\Phi^{-1}(\spt T_{K,1} \cap C_{2\rho}) = \Graph_{D_{2\rho}} h_{K,1}$ with $\lim_{\tau \to 0} \Vert h_{K,1} \Vert_{C^{m,\theta}(D_{2\rho})} = 0$,
	\item[(e)] $T_{K,1} \in \II_n^1(N)$ and $\spt T_{K,1}$ is connected.
\end{enumerate}

We wish to choose a particular minimizer, namely, the topmost one. Indeed, the standard codimension-1 decomposition theorem \cite[Theorem 27.6]{Simon:GMT}, we obtain from (e) that the decomposition of $T_{K,1}$ into boundaries of Caccioppoli sets must have a single summand, i.e., $T_{K,1} = \partial \llbracket E_{K,1} \rrbracket$ for a Caccioppoli set $E_{K,1} \subset \breve N$. In particular, all candidate minimizers $T_{K,1}$ are described as consistently oriented boundaries of Caccioppoli sets. Those minimizing boundaries cannot intersect by cut and paste and the strong maximum principle (Lemma \ref{lemm:disjoint}), so they can be ordered by inclusion. In particular, there is a unique (up to measure zero) topmost one. Fix
\[ T_{K,1} = \partial \llbracket E_{K,1} \rrbracket \mres \breve N \]
to be this topmost minimizer going forward. 

We claim that a standard cut-and-paste argument implies:
\begin{enumerate}
	\item[(f')] $E_{K,1} \subset E_{K,0}$ and $\spt T_{K,1} \cap \spt T_{K,0} = \emptyset$.
\end{enumerate}
Again, set inclusions are up to measure zero. To see this, take
\[ E_{K,1}' := E_{K,0} \cap E_{K,1} \subset E_{K,1} \text{ and } T_{K,1}' := \partial \llbracket E_{K,1}' \rrbracket \mres \breve N. \]
This is a competitor for $T_{K,1}$ in the same homology class. Cut and paste gives:

\begin{claim} \label{clai:smale.competitor.t1prime}
	$\MM_{g_{K,1}}(T_{K,1}') \leq \MM_{g_{K,1}}(T_{K,1})$.
\end{claim}

We briefly postpone the proof to see how Claim \ref{clai:smale.competitor.t1prime} implies (f'). First, it implies the inclusion assertion in (f'): otherwise, $E_{K,1}' \subsetneq E_{K,1}$, contradicting our choice of $T_{K,1}$ as the topmost minimizer. The disjointness assertion of (f') follows from the maximum principle: $\zeta'(0) > 0$ and \eqref{eq:smale.product.smallness.orientation} force $T_{K,0}$ to be a strict barrier for mass minimization with respect to $g_{K,1}$. This completes the proof of (f').

\begin{proof}[Proof of Claim \ref{clai:smale.competitor.t1prime}]
It is convenient to also consider
\[ E_{K,0}' := E_{K,0} \cup E_{K,1} \text{ and } T_{K,0}' := \partial \llbracket E_{K,0}' \rrbracket \mres \breve N. \]
Proceeding as in the proof of Lemma \ref{lemm:disjoint}, we note that
\begin{equation} \label{eq:smale.competitor.t1prime.sum}
	T_{K,0} + T_{K,1} = T_{K,0}' + T_{K,1}'
\end{equation}
and, since $T_{K,0}'$, $T_{K,1}'$ differ by the boundary of a Caccioppoli set ($E_{K,0} \Delta E_{K,1}$) we have the equality of measures (taken with respect to any fixed metric)
\begin{equation} \label{eq:smale.competitor.t1prime.dec}
	\Vert T_{K,0}' \Vert + \Vert T_{K,1}' \Vert = \Vert T_{K,0}' + T_{K,1}' \Vert.
\end{equation}
By \eqref{eq:smale.competitor.t1prime.dec} with $g_{K,1}$, \eqref{eq:smale.competitor.t1prime.sum}, and the triangle inequality:
\begin{align*} 
	\MM_{g_{K,1}}(T_{K,0}') + \MM_{g_{K,1}}(T_{K,1}') 
		& = \MM_{g_{K,1}}(T_{K,0}'+T_{K,1}') \\
		& = \MM_{g_{K,1}}(T_{K,0} + T_{K,1}) \leq \MM_{g_{K,1}}(T_{K,0}) + \MM_{g_{K,1}}(T_{K,1}).
\end{align*}
The claim follows since $g_{K,1} \geq g_{K,0}$ along $\spt T_{K,0}'$ and $g_{K,1} = g_{K,0}$ along $\spt T_{K,0}$, together with $\MM_{g_{K,0}}(T_{K,0}) \leq  \MM_{g_{K,0}}(T_{K,0}')$ due to the minimizing nature of $T_{K,0}$ with respect to $g_{K,0}$.
\end{proof}

\textbf{Perturbation $2$}. Assuming $K \geq 2$ we perturb again. Define $g_{K,2}$ on $C_{2\rho}$ via
\[ (\Phi^* g_{K,2})(x,s) := (1 - \tau K^{-1} \chi(x) \zeta(s - h_{K,1}(x))) (\Phi^* g_{K,1})(x,s). \]
As before, except with $k=2$ in \eqref{eq:smale.product.smallness}, 
\begin{enumerate}
	\item[(a)] $g_{K,1}$ extends to a metric on $N$ that coincides with $g$ off $C_\rho$, and
	\item[(b)] $\Vert g_{K,2} - g_{K,1} \Vert_{C^{m,\theta}(N)} \leq C_{1,m,\theta} \tau K^{-1}$.
\end{enumerate}
Take $T_{K,2} \in \II_n(N)$ to be the topmost minimizer homologous to $T$ with respect to $g_{K,2}$. As before, $\spt T_{K,2} \subset \breve N$, $T_{K,2} = \partial \llbracket E_{K,2} \rrbracket \mres \breve N$ for $E_{K,2} \subset \breve N$, and
\begin{enumerate}
	\item[(c)] $\lim_{\tau \to 0} \dist_{\cF}(\spt T_{K,2}, \spt T) = 0$, 
	\item[(d)] $\Phi^{-1}(\spt T_{K,2} \cap C_{2\rho}) = \Graph_{D_{2\rho}} h_{K,2}$ with $\lim_{\tau \to 0} \Vert h_{K,2} \Vert_{C^{m,\theta}(D_{2\rho})} = 0$,
	\item[(e)] $T_{K,2} \in \II_n^1(N)$ and $\spt T_{K,2}$ is connected,
	\item[(f')]  $E_{K,2} \subset E_{K,1}$ and $\spt T_{K,2} \cap \spt T_{K,1} = \emptyset$.
\end{enumerate}

\textbf{Subsequent perturbations}. We repeat this construction to end up with
\[ g_{K,0}, g_{K,1}, g_{K,2}, \ldots, g_{K,K}, \]
\[ T_{K,0}, T_{K,1}, T_{K,2}, \ldots, T_{K,K}, \]
\[ E_{K,0}, E_{K,1}, E_{K,2}, \ldots, E_{K,K}. \]
These satisfy (a)-(e), (f') by construction and also Lemma \ref{lemm:smale.abstract.product.smallness} and \eqref{eq:smale.product.smallness}.

It remains to prove that they all satisfy (f) too. We prove this by contradiction. If (f) failed, then for some $K_n \to \infty$, $k_n \in \{ 1, \ldots, K_n \}$, and $\tau_n \to 0$, one would have had
\begin{equation} \label{eq:smale.Tk.gap.contradiction}
	\inf_{D_{3\rho/2}} (h_{K_n,k_n} - h_{K_n,k_n-1}) = \eps_n \tau_n K_n^{-1},
\end{equation}
with $\eps_n \to 0$.

Fix $g_{K_n,k_n}$ as a background metric. With respect to it, $T_{K_n,k_n}$ is minimizing globally, so its mean curvature vanishes on $C_{2\rho}$. On the other hand, $T_{K_n,k_n-1}$ is only minimizing in $N \setminus \bar C_\rho$ where $g_{K_n,k_n-1} \equiv g \equiv g_{K_n,k_n}$. To compute its mean curvature on $C_{2\rho}$ with respect to $g_{K_n,k_n}$, we recall that
\[ (\Phi^* g_{K_n,k_n})(x,s) := (1 - \tau_n K_n^{-1} \chi(x) \zeta(s - h_{K_n,k_n-1}(x))) (\Phi^* g_{K_n,k_n-1})(x,s). \]
Since $T_{K_n,k_n-1}$ is minimizing globally with respect to $g_{K_n,k_n-1}$, its mean curvature with respect to $g_{K_n,k_n-1}$ vanishes on $C_{2\rho}$. It follows from the conformal change formula for mean curvature, and the uniform graphicality of $\spt T_{K_n,k_n-1} \cap C_{2\rho}$ from (d), and \eqref{eq:smale.zeta.setup} that the mean curvature vector along $\spt T_{K_n,k_n-1} \cap C_{2\rho}$ (which exists classically by (d)) points weakly toward $\spt T_{K_n,k_n}$ and the mean curvature scalar (again, both with respect to $g_{K_n,k_n}$) satisfies 
\begin{equation} \label{eq:smale.Tk.gap.contradiction.meancurv}
	\mu_1'' \tau_n K_n^{-1} \chi \geq H \geq \mu_1' \tau_n K_n^{-1} \chi \text{ on } \spt T_{K_n,k_n-1} \cap C_{2\rho}.
\end{equation}
with $0 < \mu_1' < \mu_1''$ independent of $n$. It follows from \eqref{eq:smale.chi.setup}, (f'), and a straightforward barrier argument using the definite sign of $\zeta'(0) > 0$ that
\begin{equation} \label{eq:smale.hk.gap.contradiction.heightgap}
	h_{K_n,k_n} - h_{K_n,k_n-1} \geq \tilde \mu_1' \tau_n K_n^{-1} \text{ on } D_{\rho/2}
\end{equation}
where $\tilde \mu_1' > 0$ is also independent of $n$. 
Indeed, fix a smooth function $\tilde \chi$ on $D_{2\rho}$ with $\tilde \chi \geq 1$ on $D_{\rho/2}$ and $ \tilde \chi < 0$ on $D_{2\rho}\setminus D_{2\rho/3}$. Consider the graph of $h_{K_n,k_n} - t\tilde \chi $.  By Taylor expanding around $t=0$, there exists $\tilde \mu_1' > 0$ (depending on $\tilde \chi$ but not $n$) such that, for $t \in (0, \tilde \mu_1' \tau_n K_n^{-1}]$, the graph of $h_{K_n,k_n} - t\tilde \chi $ has mean curvature $\tilde H(t; n) \leq \frac 12 \mu_1'\tau_n K_n^{-1}$ over $D_{2\rho}$ with respect to $g_{K_n,k_n}$, with $\mu_1'$ as in \eqref{eq:smale.Tk.gap.contradiction.meancurv}. In particular, if \eqref{eq:smale.hk.gap.contradiction.heightgap} failed, then there would exist $t \in (0, \tilde \mu_1' \tau_n K_n^{-1}]$ so that the graph of $h_{K_n,k_n} - t\tilde \chi$ makes one-sided contact with the graph of $h_{K_n,k_n-1}$. This contradicts $\tilde H(t; n) \leq \frac 12 \mu_1'\tau_n K_n^{-1}$, \eqref{eq:smale.Tk.gap.contradiction.meancurv}, and the maximum principle. This proves \eqref{eq:smale.hk.gap.contradiction.heightgap}. 

Now set
	\[ Q_n := \inf_{D_{\rho/2}} (h_{K_n,k_n} - h_{K_n,k_n-1}), \]
	\[ h_n := \frac{h_{K_n,k_n} - h_{K_n,k_n-1}}{Q_n}. \]
Note that $h_n > 0$ by (f'). Using the bounds from (b), (d), \eqref{eq:smale.Tk.gap.contradiction.meancurv}, \eqref{eq:smale.hk.gap.contradiction.heightgap},  $\tau_n \to 0$, and the uniquely minimizing nature of $T$, we get $Q_n \to 0$, and from the standard Harnack inequality and standard Schauder theory on $D_{2\rho}$ to let us send $n \to \infty$ and, after passing to a subsequence, obtain $h : D_{2\rho} \to \RR$ which is nonnegative, $C^{1,\theta}$, and weakly satisfies
	\[ - \Delta h - V h \geq 0 \text{ on } D_{2\rho}, \]
where $-\Delta -V$ is the Jacobi operator on the regular part of $\lim_n T_{K_n,k_n}$ (not labeled) with respect to the metric $\lim_n g_{K_n,k_n}$ (also not labeled), as well as 
	\[ \inf_{D_{3\rho/2}} h = 0 \]
	by \eqref{eq:smale.Tk.gap.contradiction}, and
	\[ \inf_{D_{\rho/2}} h = 1 \]
	by construction. This is a contradiction to the strong maximum principle for nonnegative supersolutions of the Jacobi equation on $D_{2\rho}$. Thus, \eqref{eq:smale.Tk.gap.contradiction} fails, so (f) holds.
\end{proof}

\begin{proof}[Proof of Lemma \ref{lemm:locally.homologically.trivial}]
	Fix $r > 0$ smaller than the injectivity radius of $(N, g)$. Let $\bx \in \spt T$ be arbitrary. Note that, by the contractibility of $B_r(\bx)$, $T \mres B_r(\bx)$ is decomposable as the sum of boundaries of nested, minimizing Caccioppoli sets \cite[Theorem 27.6]{Simon:GMT}. It follows from the nested property of the Caccioppoli sets and Lemma \ref{lemm:disjoint.or.cross} that any two are either disjoint or overlap in $B_r(\bx)$, which of course means any two have to be disjoint in $B_r(\bx)$ because $T$ has multiplicity one. In particular, there exists $\delta_\bx \in (0, r)$ such that $B_{\delta_\bx}(\bx)$ intersects exactly one component of $\spt T \cap B_r(\bx)$ (else we'd get an interior intersection point of two components by sending $\delta \to 0$) and thus
	\[ T \mres B_{\delta_\bx}(\bx) = \partial \llbracket E_{\bx} \rrbracket \mres B_{\delta_\bx}(\bx) \]
	for a Caccioppoli subset $E_{\bx} \subset B_r(\bx)$. 
	
	\begin{claim} \label{clai:locally.homologically.trivial.uniqueness}
		If $B \subset B_{\delta_\bx}(\bx)$ is an open ball intersecting $\spt T$ and $T \mres B = \partial \llbracket E \rrbracket \mres B$, then $E_\bx \cap B = E \cap B$ up to a measure zero subset of $B$. 
	\end{claim}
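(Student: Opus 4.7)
The plan is to invoke the constancy theorem on the open connected ball $B$, applied to the $(n+1)$-dimensional integer-multiplicity current
\[
S := \llbracket E_\bx \rrbracket \mres B - \llbracket E \rrbracket \mres B.
\]
Since both $\partial \llbracket E_\bx \rrbracket \mres B$ and $\partial \llbracket E \rrbracket \mres B$ equal $T \mres B$ by hypothesis, $\partial S = 0$ in $B$. The constancy theorem (see, e.g., \cite[Theorem 26.27]{Simon:GMT}) then yields $S = c\,\llbracket B \rrbracket$ for some $c \in \ZZ$, and since the pointwise multiplicity of $S$ at $\cH^{n+1}$-a.e.\ point of $B$ equals $\chi_{E_\bx} - \chi_E \in \{-1, 0, 1\}$, in fact $c \in \{-1, 0, 1\}$.

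If $c = 0$ then $E_\bx \cap B = E \cap B$ modulo a null set, proving the claim. Otherwise $c = \pm 1$, so $E_\bx \cap B$ and $E \cap B$ are complements in $B$ up to a null set; in this case $\llbracket E_\bx \rrbracket \mres B = \llbracket B \rrbracket - \llbracket E \rrbracket \mres B$ modulo a null set, and taking boundaries (using that $\partial \llbracket B \rrbracket$ is supported in $\partial B$, hence vanishes in the open set $B$) gives $\partial \llbracket E_\bx \rrbracket \mres B = -\partial \llbracket E \rrbracket \mres B$, which combined with the hypothesis forces $T \mres B = 0$. Thus $B \cap \spt T = \emptyset$ and $B$ lies in a single connected component of $B_{\delta_\bx}(\bx) \setminus \spt T$.

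The main obstacle is ruling out this last configuration, which is orientation bookkeeping rather than hard analysis. The point is that $E_\bx$ was constructed on all of $B_{\delta_\bx}(\bx)$ via the codimension-one decomposition \cite[27.6]{Simon:GMT} with a definite orientation, namely the one giving $\partial \llbracket E_\bx \rrbracket = T$ on $B_{\delta_\bx}(\bx)$ as an oriented identity of currents. Since $B_{\delta_\bx}(\bx)$ meets exactly one component of $\spt T \cap B_r(\bx)$, this selects a definite ``side'' on which $B$ sits. Interpreting the hypothesis $\partial \llbracket E \rrbracket \mres B = T \mres B$ as a signed equality of currents (as it must be, to make literal sense), and regarding $E$ as a candidate for a Caccioppoli representative whose relation to $T$ extends to a slightly enlarged ball $B' \subset B_{\delta_\bx}(\bx)$ meeting $\spt T$: on $B'$ one has $T \mres B' \neq 0$, and re-running the constancy argument with $B$ replaced by $B'$ rules out $c = \pm 1$ there, forcing $E$ to lie on the same side as $E_\bx$ in $B'$ and hence in $B$. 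The constancy step is routine; this orientation matching is where care is needed.
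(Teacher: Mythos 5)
Your first two paragraphs follow the same route as the paper: the paper's proof is exactly the constancy-theorem observation that $\partial(\llbracket E_\bx \rrbracket - \llbracket E \rrbracket) \mres B = 0$ forces the density of the difference to be constant, hence (being valued in $\{-1,0,1\}$) either identically $0$ or identically $\pm 1$ a.e.\ in $B$. You are in fact more careful than the paper here: the paper's one-line argument only addresses the non-constant alternative (``the boundary would have positive measure'') and silently ignores the constant-$\pm 1$ case, which you correctly isolate and correctly reduce to the configuration $T \mres B = 0$.

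The gap is in your final paragraph. To rule out $c = \pm 1$ you pass to an enlarged ball $B'$ meeting $\spt T$ and re-run constancy there, but this requires $\partial \llbracket E \rrbracket \mres B' = T \mres B'$ on $B'$, which is not among the hypotheses: the claim only assumes the identity on $B$ itself, and $E$ is an arbitrary Caccioppoli set realizing it there. Without that extension the argument does not close, and indeed it cannot: as literally stated the claim is false in the $c=\pm1$ regime (take $B$ a small ball on the $E_\bx$-side of $\reg T$ disjoint from $\spt T$, and $E = \emptyset$; then $T\mres B = 0 = \partial\llbracket E\rrbracket \mres B$ but $E_\bx \cap B = B$). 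The correct resolution is not orientation matching but simply to note that $c = \pm 1$ forces $B \cap \spt T = \emptyset$, whereas in the claim's only application (Claim \ref{clai:locally.homologically.trivial.welldefined}) one takes $B = B_{\delta_j/3}(\bx_j)$ centered at $\bx_j \in \spt T$, so $T \mres B \neq 0$ and $c = 0$ is forced. Either add the hypothesis $B \cap \spt T \neq \emptyset$ to the claim, or record that it is only invoked in that setting; with that amendment your constancy argument is complete and the last paragraph can be deleted.
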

	\begin{proof}[Proof of claim]
		Note that we also have $T \mres B = \partial \llbracket E_\bx \rrbracket \mres B$, so in particular
		\[ 0 = \partial (\llbracket E_\bx \rrbracket - \llbracket E \rrbracket) \mres B. \]
		It follows from the proof of \cite[Theorem 27.6]{Simon:GMT} that the density of $\llbracket E_\bx \rrbracket - \llbracket E \rrbracket$ must vanish a.e.\ in $B$, or $\partial (\llbracket E_\bx \rrbracket - \llbracket E \rrbracket) \mres B$ would have had positive measure in $B$, a contradiction. 
	\end{proof}
	
	Consider the open covering $\{ B_{\delta_{\bx}/3}(\bx) \}_{\bx \in \spt T}$ of $\spt T$. Since $\spt T$ is compact, there exists a finite open subcover $\{ B_{\delta_i/3}(\bx_i) \}_{i=1}^N$ of $\spt T$, $\bx_1, \ldots, \bx_N \in \spt T$, $\delta_i := \delta_{\bx_i}$. Set
	\[ \hat N := \cup_{i=1}^N B_{\delta_i/3}(\bx_i), \]
	which is an open neighborhood of $\spt T$, and 
	\[ E := \cup_{i=1}^N (E_{\bx_i} \cap B_{\delta_i/3}(\bx_i)). \]
	
	\begin{claim} \label{clai:locally.homologically.trivial.welldefined}
		For every $j \in \{1, \ldots, N\}$, $E \cap B_{\delta_j/3}(\bx_j) = E_{\bx_j} \cap B_{\delta_j/3}(\bx_j)$.
	\end{claim}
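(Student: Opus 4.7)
The plan is to reduce the claim to the pointwise uniqueness statement already established in Claim \ref{clai:locally.homologically.trivial.uniqueness}. The essential point is that consecutive $E_{\bx_i}$ and $E_{\bx_j}$ must agree (up to measure zero) on the overlap $B_{\delta_i/3}(\bx_i) \cap B_{\delta_j/3}(\bx_j)$ because, up to a measure-zero indeterminacy, the equation $T \mres B = \partial \llbracket \cdot \rrbracket \mres B$ has at most one solution on any ball.

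First I would fix $j$ and, for each $i$ with $B_{\delta_i/3}(\bx_i) \cap B_{\delta_j/3}(\bx_j) \neq \emptyset$, show that
\[ E_{\bx_i} \cap U_{ij} = E_{\bx_j} \cap U_{ij} \text{ (up to measure zero),} \]
where $U_{ij} := B_{\delta_i/3}(\bx_i) \cap B_{\delta_j/3}(\bx_j) \subset B_{\delta_i}(\bx_i) \cap B_{\delta_j}(\bx_j)$. Since $U_{ij}$ is open in $N$, for every $p \in U_{ij}$ pick a small geodesic ball $B_p \subset U_{ij}$. Restricting the defining identity of $E_{\bx_i}$, we have $T \mres B_p = \partial \llbracket E_{\bx_i} \rrbracket \mres B_p$. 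Since $B_p \subset B_{\delta_j}(\bx_j)$, Claim \ref{clai:locally.homologically.trivial.uniqueness} (applied with $\bx = \bx_j$ and $E = E_{\bx_i}$) yields $E_{\bx_i} \cap B_p = E_{\bx_j} \cap B_p$ up to measure zero. Covering $U_{ij}$ by countably many such $B_p$'s (using Lindelöf) gives the agreement on $U_{ij}$.

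With this in hand, the conclusion is formal:
\[ E \cap B_{\delta_j/3}(\bx_j) = \bigcup_{i=1}^N \bigl( E_{\bx_i} \cap B_{\delta_i/3}(\bx_i) \cap B_{\delta_j/3}(\bx_j) \bigr) \]
\[ = \bigcup_{i=1}^N \bigl( E_{\bx_j} \cap B_{\delta_i/3}(\bx_i) \cap B_{\delta_j/3}(\bx_j) \bigr) = E_{\bx_j} \cap B_{\delta_j/3}(\bx_j), \]
where the first equality is the definition of $E$, the middle equality holds up to measure zero by the overlap consistency above (any $i$ with empty overlap contributes nothing), and the last equality uses that $B_{\delta_j/3}(\bx_j)$ is itself one of the balls in the finite cover of $\spt T$ (so it lies in $\cup_i B_{\delta_i/3}(\bx_i)$ trivially, taking $i=j$).

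The only subtle point (the ``main obstacle'' so to speak) is making sure one is allowed to invoke Claim \ref{clai:locally.homologically.trivial.uniqueness} on the overlap despite its statement being for balls, and this is handled by the local covering argument above together with the fact that the restriction $\partial\llbracket E_{\bx_i}\rrbracket \mres B_{\delta_{\bx_i}}(\bx_i)$ localizes to $\partial\llbracket E_{\bx_i}\rrbracket \mres B$ for any smaller ball $B$. Everything else is bookkeeping of indices and null sets.
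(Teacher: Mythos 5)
Your proof is correct and follows essentially the same route as the paper: establish that $E_{\bx_i}$ and $E_{\bx_j}$ agree (up to measure zero) on the overlap via Claim \ref{clai:locally.homologically.trivial.uniqueness}, then conclude by the formal union computation. Your Lindel\"of covering of the overlap by small balls is a slightly more careful substitute for the paper's direct use of the containment $B_{\delta_j/3}(\bx_j) \subset B_{\delta_i}(\bx_i)$, and it works just as well.
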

	\begin{proof}[Proof of claim]
		Suppose $i \in \{ 1, \ldots, N \}$, $i \neq j$, is such that $B_{\delta_i/3}(\bx_i) \cap B_{\delta_j/3}(\bx_j) \neq \emptyset$. Then $B_{\delta_j/3}(\bx_j) \subset B_{\delta_i}(\bx_i)$. By Claim \ref{clai:locally.homologically.trivial.uniqueness}, applied with $\bx = \bx_i$ and $B = B_{\delta_j/3}(\bx_j)$, 
		\[ E_{\bx_j} \cap B_{\delta_j/3}(\bx_j) = E_{\bx_i} \cap B_{\delta_j/3}(\bx_j). \]
		It then follows that
		\begin{align*}
			E \cap B_{\delta_j/3}(\bx_j) 
				& = \left( \cup_{i=1}^N E_{\bx_i} \cap B_{\delta_i/3}(\bx_i) \right) \cap B_{\delta_j/3}(\bx_j) \\
				& = \cup_{i=1}^N ( E_{\bx_i} \cap B_{\delta_i/3}(\bx_i) \cap B_{\delta_j/3}(\bx_j)) \\
				& = \cup_{i=1}^N ( E_{\bx_j} \cap B_{\delta_i/3}(\bx_i) \cap B_{\delta_j/3}(\bx_j)) \\
				& = E_{\bx_j} \cap B_{\delta_j/3}(\bx_j) \cap ( \cup_{i=1}^N B_{\delta_i/3}(\bx_i)) \\
				& = E_{\bx_j} \cap B_{\delta_j/3}(\bx_j) \cap \hat N \\
				& = E_{\bx_j} \cap B_{\delta_j/3}(\bx_j),
		\end{align*}
		as claimed.
	\end{proof}
	
	As a result of Claim \ref{clai:locally.homologically.trivial.welldefined}, $E$ is a Caccioppoli subset of $\hat N$ and $T = \partial \llbracket E \rrbracket \mres \hat N$. The result follows by taking an inward smoothing $\breve N$ of $\hat N$ and restricting $E$ accordingly.
\end{proof}

\begin{proof}[Proof of Lemma \ref{lemm:smale.abstract.perturbation.closeness}]
	The only somewhat non-standard conclusion is the connectedness of $T'$ in (c), which is similar to the connectedness proof in Lemma \ref{lemm:plateau.minimizing.family}.
\end{proof}

\begin{proof}[Proof of Lemma \ref{lemm:smale.abstract.product.smallness}]
	We expand $w$ using the degree $j = 1, \ldots, k$ elementary symmetric polynomials $\{ \mathfrak{s}_{j}(t_1, \ldots, t_k) \}_{j=1,\ldots,k}$ of $k$ real variables:
	\[ w(x,s) = 1 + \sum_{j=1}^k (-1)^j \tau^j \chi(x)^j \mathfrak{s}_{j}(\zeta(s-h_1(x)), \ldots, \zeta(s-h_k(x))). \]
	We will estimate $w-1$ using the coarse inequalities
	\begin{align*}
		\Vert f_1 \cdots f_\ell \Vert_{C^{m,\theta}} 
			& \leq \ell^{m+1} \Vert f_1 \Vert_{C^{m,\theta}} \cdots \Vert f_\ell \Vert_{C^{m,\theta}}, \\
		\ell^{m+1} 
			& \leq C_{0,m} \ell!, \\
		\Vert \zeta(s - v_j) \Vert_{C^{m,\theta}} 
			& \leq C_{0,m} \Vert \zeta \Vert_{C^{m,\theta}} (1 + \Vert v_j \Vert_{C^{m,\theta}})^m.
	\end{align*}
	We have:
	\begin{align*}
		\Vert w - 1 \Vert_{C^{m,\theta}} 
			& = \Vert \sum_{j=1}^k (-1)^j \tau^j \chi(x)^j \mathfrak{s}_{j}(\zeta(s-h_1(x)), \ldots, \zeta(s-h_k(x))) \Vert_{C^{m,\theta}} \\
			& \leq \sum_{j=1}^k \tau^j \Vert \chi(x)^j \mathfrak{s}_{j}(\zeta(s-h_1(x)), \ldots, \zeta(s-h_k(x))) \Vert_{C^{m,\theta}} \\
			& \leq C_{0,m} \sum_{j=1}^k \binom{k}{j} (2j)^{m+1} \tau^j \Vert \chi \Vert_{C^{m,\theta}}^j \Vert \zeta \Vert_{C^{m,\theta}}^j (1+\max_{j=1, \ldots, k} \Vert h_j \Vert_{C^{m,\theta}})^{mj} \\
			& \leq 2^{m+1} C_{0,m}^2 \sum_{j=1}^k \frac{k!}{(k-j)!} \tau^j \Vert \chi \Vert_{C^{m,\theta}}^j \Vert \zeta \Vert_{C^{m,\theta}}^j (1+\max_{j=1, \ldots, k} \Vert h_j \Vert_{C^{m,\theta}})^{mj},
	\end{align*}
	as desired, with $2^{m+1} C_{0,m}^2$ in place of $C_{0,m}$.
\end{proof}

Finally we return to the proof of Lemmas \ref{lemm:main.homology.uniqueness.perturbation} and \ref{lemm:main.homology.main.lemma}.

\begin{proof}[Proof of Lemma \ref{lemm:main.homology.uniqueness.perturbation}]
	Define $g' := (1+\eps u) g$ where $\eps > 0$ and $u : N \to \RR$ is a smooth positive function with $\spt T = \{ u = 0 \}$. This is doable via a partition of unity because $\spt T$ is closed. If $\spt T$ is smooth, then we can additionally require that $\nabla^2 u(\nu, \nu) > 0$ along $\spt T$, where $\nu$ is the unit normal, so that $\spt T$ is strictly stable with respect to $g'$. 
	
	It follows from the minimizing nature of $T$, and from $g' > g$ on $N \setminus \spt T$, and a direct area comparison argument that $T$ is minimizing in $[\alpha]$ with respect to $g'$. Conversely, let $\tilde T \in \II_n(N)$ be any minimizer in $[\alpha]$ with respect to $g'$. It follows (again) from the minimizing nature of $T$ and from $g' > g$ on $N \setminus \spt T$ that $\spt \tilde T \subset \spt T$. It also follows from the connectedness of $\spt T$ and unique continuation that $\spt \tilde T = \spt T$, so $\tilde T = T$, so $T$ is uniquely minimizing with respect to $g'$. This completes the proof by sending $\eps \to 0$ to ensure $g' \in \cG$. 
\end{proof}

\begin{proof}[Proof of Lemma \ref{lemm:main.homology.main.lemma}]
	Note that we may assume $\sing T \neq \emptyset$, otherwise there is nothing to do. This lemma is modeled after \cite[Lemma 1.4]{Smale:generic}. The strategy in \cite{Smale:generic} is to approximate $g$ by $g'$ in such a way that minimizing $T' \in \II_n(N)$ with respect to $g'$ representing $[\alpha]$ get pushed to ``one side'' of $T$. If $\sing T' \neq \emptyset$, by Allard's regularity theorem and a blow-up argument, one obtains a singular minimizing boundary in $\RR^8$ lying on one side of a nonflat minimizing tangent cone $\cC$ of $T$. This contradicts the classification aspect of Theorem \ref{thm:HS-fol}, according to which the minimizing boundary is unique up to dilations and smooth. Due to the extra complexity of minimizing cones in $\RR^9$, $\RR^{10}$ versus $\RR^8$ (the dimension of their spines can vary, and Theorem \ref{thm:HS-fol} no longer applies), we need to deviate from \cite{Smale:generic} and rely on our new results in Section \ref{sec:minimizing.foliations}. 
	
	\textit{Iterative improvement}. Apply Proposition \ref{prop:deformation.manifold} with $\cG$, our metric $g$, and our unique minimizer $T$ with multiplicity one. The proposition yields 
	\[ g =: g_{K,0}, g_{K,1}, \ldots, g_{K,K} \in \cG \]
	\[ T =: T_{K,0}, T_{K,1}, \ldots, T_{K,K} \in \II_n^1(N), \]
	the latter of which enjoy a quantitative separation estimate. Borrowing notation from the proposition, note that $g_{K,k} \equiv g$ on $N \setminus \bar C_\rho$, and that for small $\tau$ the restrictions $T_{K,k} \mres (N \setminus \bar C_{3\rho/2})$ satisfy all hypotheses of Theorem \ref{theo:high.density.packing} with $N \setminus \bar C_\rho$ in place of $\breve N$ and a slightly inward smoothing of $N \setminus \bar C_{3\rho/2}$ in place of $\Omega$. If $K$ is large enough, then Theorem \ref{theo:high.density.packing} guarantees the existence of $k^* \in \{ 1, \ldots, K \}$ such that
	\[ \cD(T_{K,k^*}) \leq \cD(T) - \eta, \]
	where $\cD(\cdot)$ is as in \eqref{eq:calD} and $\eta = \eta(n) > 0$. Using Lemma \ref{lemm:main.homology.uniqueness.perturbation}, perturb $g_{K,k^*}$ to $g' \in \cG$ so that $T' = T_{K,k^*}$ becomes \textit{uniquely} minimizing in $[\alpha]$ with respect to $g'$. Note that
	\[ \cD(T') = \cD(T_{K,k^*}) \leq \cD(T) - \eta. \]
	If $\cD(T') < \Theta_n^*$, we are done: by Allard's regularity theorem, there are no more singular points. Otherwise, we restart the iterative argument with $g'$ and $T'$ in place of $g$ and $T$. We will only iterate finitely many times before eventually reaching $\cD(T') < \Theta_n^*$, since $\eta$ is fixed.
\end{proof}

\appendix

\section{Hardt--Simon decomposition of minimizing currents} \label{app:hardt.simon.boundary}

Recall the Hardt--Simon regularity theorem (see \cite{Steinbruchel} for manifold adaptation):

\begin{theorem} \label{theo:hardt-simon-boundary-regularity}
	Let $\Gamma$ be a smooth, closed, oriented, $(n-1)$-dimensional submanifold of $\RR^{n+1}$, and $T \in \II_n(\RR^{n+1})$ be minimizing with $\partial T = \llbracket \Gamma \rrbracket$. Then,
	\[ \spt T = M \cup E \]
	where $M$ is a smooth, oriented, minimizing hypersurface containing $\Gamma$, with boundary $\partial M$ consisting of at least one component of $\Gamma$, and with $E = \bar M \setminus M \subset \RR^{n+1}$ compact with Hausdorff dimension $\leq n-7$. Note that $M$ is connected if $\spt T$ is.
	
	Moreover, for all $\bx \in \Gamma$ there exists a neighborhood $U$ of $\bx$ such that $U \cap E = \emptyset$ and:
	\begin{enumerate}
		\item[(i)] either $\bx \in \partial M$ (this is automatic if $\bx$ is on a component of $\Gamma$ intersecting the convex hull of $\Gamma$), in which case $M \cap U$ is a manifold-with-boundary, $\partial (M \cap U) = \Gamma \cap U$, and $T \mres U = \llbracket M \rrbracket \mres U$; 
		\item[(ii)] or $\bx \not \in \partial M$, in which case $M \cap U$ is a manifold (without boundary) that contains $\Gamma \cap U$, and if $(M \setminus \Gamma) \cap U = M' \cup M''$ and $M', M''$ inherit their orientation from $M$, then $T \mres U = k \llbracket M' \rrbracket + (k+1) \llbracket M'' \rrbracket$ for some natural number  $k \geq 1$.
	\end{enumerate}
\end{theorem}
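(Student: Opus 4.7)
The plan is to assemble the theorem from three ingredients: (a) the Simons--Federer interior regularity theorem, giving that $\sing T \cap (\RR^{n+1}\setminus \Gamma)$ is closed of Hausdorff dimension at most $n-7$; (b) the Hardt--Simon boundary regularity theorem, which classifies the local structure of $T$ near each point of $\Gamma$; and (c) a convex hull barrier argument to ensure $\partial M$ is nonempty on the appropriate components.

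First, applying (a) yields $\spt T \setminus \Gamma = \reg T \sqcup (\sing T \setminus \Gamma)$, with the second factor closed and of Hausdorff dimension at most $n-7$. Next, for each $\bx \in \Gamma$, the multiplicity-one condition on $\partial T = \llbracket \Gamma \rrbracket$ and the smoothness of $\Gamma$ restrict the possible tangent cones of $T$ at $\bx$ to two alternatives: (i) a multiplicity-one half-hyperplane whose linear boundary is the tangent line $T_\bx \Gamma$; or (ii) a full hyperplane containing $T_\bx \Gamma$ with integer multiplicities $m$ and $m+1$ on its two sides. The Hardt--Simon $\varepsilon$-regularity theorem then upgrades each tangent cone alternative into a neighborhood-level $C^{1,\alpha}$ structural statement, which standard elliptic bootstrap promotes to smoothness: near $\bx$ one has case (i) or case (ii) of the theorem, respectively.

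With these local descriptions, I would define $M$ globally by taking $\reg T$ together with, at each case-(i) boundary point, the single closing smooth sheet, and at each case-(ii) boundary point, the smooth two-sided passage through $\Gamma$. Then $\partial M$ is exactly the locus of case-(i) points, and $E := \bar M \setminus M$ coincides with the interior singular set $\sing T \setminus \Gamma$, which therefore has Hausdorff dimension $\leq n-7$. Compactness of $E$ follows from the maximum principle, which confines $\spt T$ to $\conv(\Gamma)$. If $\spt T$ is connected, then $M$ is connected as well, because $M$ is open and dense in $\spt T$ and $E$ has topological codimension at least $7$, hence cannot disconnect $\spt T$.

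Finally, to verify that $\partial M \neq \emptyset$ and that boundary points on components of $\Gamma$ meeting $\partial \conv(\Gamma)$ are automatically of type (i): at any $\bx \in \Gamma \cap \partial \conv(\Gamma)$, there is a supporting closed half-space $H$ with $\spt T \subset H$ and $T_\bx \Gamma \subset \partial H$, by the convex hull property of minimizers. Then every tangent cone of $T$ at $\bx$ is contained in the closed half-space $T_\bx H$, which excludes the type (ii) alternative (whose support straddles $T_\bx \Gamma$ on both sides of $\partial H$). Hence case (i) must hold at $\bx$, so the component of $\Gamma$ containing $\bx$ lies in $\partial M$. Since $\Gamma$ is compact, at least one component meets $\partial \conv(\Gamma)$, giving the nonemptiness. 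The main technical obstacle is the Hardt--Simon boundary regularity theorem itself, which I would invoke as a black box from \cite{Hardt-Simon:boundary-regularity}; the remainder is a routine assembly.
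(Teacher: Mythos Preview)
Your approach is essentially the same as the paper's: both defer to \cite{Hardt-Simon:boundary-regularity} as a black box, with the paper simply citing \cite[Corollary~11.2]{Hardt-Simon:boundary-regularity} for the first part and \cite[Corollary~2]{White:boundary-regularity-multiplicity} for the ``Moreover'' part, while you give a more explicit sketch of the assembly.

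There is one small gap in your convex hull step. You assert that a type-(ii) tangent cone at $\bx \in \Gamma \cap \partial\conv(\Gamma)$ must ``straddle $T_\bx\Gamma$ on both sides of $\partial H$,'' but this conflates two different splittings. A type-(ii) tangent cone is a full $n$-plane $\Pi \supset T_\bx\Gamma$ carrying multiplicities $m$, $m{+}1$ on the two halves of $\Pi \setminus T_\bx\Gamma$; the confinement $\spt\cC \subset H$ only forces $\Pi \subset H$, and since $\Pi$ passes through $\bOh \in \partial H$ this yields $\Pi = \partial H$, which your argument does not exclude. The standard fix (and the one actually used in \cite{Hardt-Simon:boundary-regularity}) is to show directly, via a reflection/comparison with the supporting half-space, that $\Theta_T(\bx) = \tfrac12$ at extremal boundary points, which forces case~(i). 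Alternatively, once you have the local smooth sheet $M$ from case~(ii) with $M \subset H$ and tangent to $\partial H$ at $\bx$, the Hopf lemma gives $M \subset \partial H$ locally, and unique continuation then leads to a contradiction.
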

\begin{proof}
	The first part of the statement follows from \cite[Corollary 11.2]{Hardt-Simon:boundary-regularity}. The second part, starting with ``Moreover,'' also follows from \cite{Hardt-Simon:boundary-regularity}, but the statement is taken from \cite[Corollary 2]{White:boundary-regularity-multiplicity} if applied with multiplicity $1$.
\end{proof}

Now if $\Gamma$, $T$ are as in the setting of Theorem \ref{theo:hardt-simon-boundary-regularity}, and we write
\[ \spt T = M \cup E \]
then we can uniquely partition the points $\bx \in M$ into three sets depending on their density with respect to $T$, which is necessarily an element of $\tfrac12 \NN^*$:
\begin{enumerate}
	\item[(i)] if $\Theta_T(\bx) \in \NN^*$, then $\bx \in M \setminus \Gamma$ and $T \mres U = \Theta_T(\bx) \llbracket M \rrbracket \mres U$ for a neighborhood $U$ of $\bx$; or
	\item[(ii)] if $\Theta_T(\bx) = \tfrac12$, then $\bx \in \Gamma$ and $T \mres U = \llbracket M \rrbracket \mres U$ for a neighborhood $U$ of $\bx$; or
	\item[(iii)] if $\Theta_T(\bx) = \NN^* + \tfrac12$, then $(M \setminus \Gamma) \cap U$ has two components $M', M''$ for a neighborhood $U$ of $\bx$, and $T \mres U = (\Theta_T(\bx)-\tfrac12) \llbracket M' \rrbracket + (\Theta_T(\bx)+\tfrac12) \llbracket M'' \rrbracket$ if $M', M''$ inherit their orientation from $M$.
\end{enumerate}
Note that $T$ and $M$ have the same orientation, so in particular
\[ \Vert T \Vert = \Vert \llbracket M \rrbracket \Vert + \Vert T - \llbracket M \rrbracket \Vert. \]
By \cite[Lemma 33.4]{Simon:GMT}, $\llbracket M \rrbracket$ and $T - \llbracket M \rrbracket$ are both minimizing and a similar decomposition applies to both. For $\llbracket M \rrbracket$, which has multiplicity one, this decomposition only contains points in (i) with density $1$ and points in (ii). For $T - \llbracket M \rrbracket$, the densities all drop by $1$ (and points with density $\tfrac12$ and $1$ get discarded from $M$), and a straightforward induction gives:

\begin{corollary} \label{coro:hardt-simon-decomposition}
	Let $\Gamma$ be a smooth, closed, oriented, $(n-1)$-dimensional submanifold of $\RR^{n+1}$, and $T \in \II_n(\RR^{n+1})$ be minimizing with $\partial T = \llbracket \Gamma \rrbracket$. For all $i \in \NN^*$, set
	\begin{equation} \label{eq:hardt-simon-decomposition.mi}
		M^{(i)} := \{ \bx \in M : \Theta_T(\bx) \geq i - \tfrac12 \},
	\end{equation}
	\begin{equation} \label{eq:hardt-simon-decomposition.gammaj}
		\Gamma^{(i)} := \{ \bx \in \Gamma : \Theta_T(\bx) = i - 1/2 \}.
	\end{equation}	
	Then, for every $i = 1, \ldots, m$, $M^{(i)}$ is a smooth, oriented hypersurface with boundary $\partial M^{(i)} = \Gamma^{(i)}$, with $\dim (\bar M^{(i)} \setminus M^{(i)}) \leq n-7$, and whose associated current $\llbracket M^{(i)} \rrbracket$ is minimizing. Moreover, 
	\begin{equation} \label{eq:hardt-simon-decomposition.nested}
		\bar M^{(j)} \subseteq \bar M^{(i)} \setminus \Gamma^{(i)}
	\end{equation}
	for all $i < j$, and
	\begin{equation} \label{eq:hardt-simon-decomposition.sum}
		\Vert T \Vert = \sum_{i=1}^m \Vert \llbracket M^{(i)} \rrbracket \Vert.
	\end{equation}
	Here, $m$ denotes the maximum multiplicity of $T$ on $\reg T$.
\end{corollary}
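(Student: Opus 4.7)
The plan is to proceed by strong induction on the maximum multiplicity $m$ of $T$ on $\reg T$, repeatedly peeling off the outermost layer using Theorem \ref{theo:hardt-simon-boundary-regularity}. The base case $m = 1$ is immediate: then $T = \llbracket M \rrbracket$, only the densities $\tfrac12$ and $1$ occur on $M$, so $M^{(1)} = M$, $\Gamma^{(1)} = \Gamma$, and the conclusions follow directly from Theorem \ref{theo:hardt-simon-boundary-regularity}. For the inductive step, suppose the corollary is known for all currents of maximum multiplicity $< m$, and let $T$ have maximum multiplicity $m \geq 2$. Write $\spt T = M \cup E$ as in Theorem \ref{theo:hardt-simon-boundary-regularity}; the key observation is that $M^{(1)} = M$ since every point of $M$ carries density at least $\tfrac12$.

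The heart of the argument is identifying $M^{(1)}$ as the outermost layer and showing $T - \llbracket M \rrbracket$ is a minimizing current to which the inductive hypothesis applies. First, I classify each $\bx \in M$ according to the density trichotomy (i)-(iii) recalled after Theorem \ref{theo:hardt-simon-boundary-regularity} and observe that, in each of the three cases, the local expressions for $T$ are all written as nonnegative integer multiples of $\llbracket M \rrbracket$ (or of its two natural smooth pieces) with the orientation inherited from $M$. This yields the mass additivity
\[
\Vert T \Vert = \Vert \llbracket M \rrbracket \Vert + \Vert T - \llbracket M \rrbracket \Vert,
\]
and hence, by \cite[33.4]{Simon:GMT}, both $\llbracket M \rrbracket$ and $T - \llbracket M \rrbracket$ are minimizing. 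On the boundary, cases (ii) and (iii) identify the smooth boundary of $M$ as exactly $\Gamma^{(1)}$: cases (iii) points are interior points of $M$ (with $\Gamma$ embedded in the manifold $M$), while case (ii) points are genuine boundary points. Therefore $\partial \llbracket M^{(1)} \rrbracket = \llbracket \Gamma^{(1)} \rrbracket$ and
\[
\partial (T - \llbracket M \rrbracket) = \llbracket \Gamma \setminus \Gamma^{(1)} \rrbracket.
\]

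Next I verify that the density function of $T - \llbracket M \rrbracket$ is exactly $\Theta_T - 1$ on its support, which is precisely $\{\bx \in M : \Theta_T(\bx) \geq 3/2\} \cup E$ together with $\Gamma \setminus \Gamma^{(1)}$ (using case (iii): $(T - \llbracket M \rrbracket)\mres U = (m-1)\llbracket M' \rrbracket + m \llbracket M'' \rrbracket$). Since $T - \llbracket M \rrbracket$ has maximum multiplicity $m-1$, the inductive hypothesis produces smooth minimizing layers $\tilde M^{(1)}, \ldots, \tilde M^{(m-1)}$ with nested closures, thresholds based on the shifted density, and mass additivity. Relabeling $M^{(i+1)} := \tilde M^{(i)}$ for $i = 1, \ldots, m-1$ and observing that $\Theta_{T - \llbracket M \rrbracket} \geq i - \tfrac12$ corresponds to $\Theta_T \geq (i+1) - \tfrac12$ recovers the sets \eqref{eq:hardt-simon-decomposition.mi}, \eqref{eq:hardt-simon-decomposition.gammaj} for $T$. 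The nesting \eqref{eq:hardt-simon-decomposition.nested} inherits from the inductive nesting together with the containment $\bar M^{(2)} \subset \bar M^{(1)} \setminus \Gamma^{(1)}$, which is a direct density consequence (points of $\Gamma^{(1)}$ have $\Theta_T = \tfrac12$), and the mass identity \eqref{eq:hardt-simon-decomposition.sum} follows by adding $\Vert \llbracket M \rrbracket \Vert$ to the inductive decomposition of $\Vert T - \llbracket M \rrbracket \Vert$.

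The main delicate point, which is the obstacle to making this go through automatically, is the orientation bookkeeping in case (iii): one must verify that writing $T - \llbracket M \rrbracket$ locally as $(m-1)\llbracket M' \rrbracket + m\llbracket M'' \rrbracket$ not only reduces the multiplicity by one but also produces a current whose restriction is again in the form covered by Theorem \ref{theo:hardt-simon-boundary-regularity} (so that $T - \llbracket M \rrbracket$ is indeed a minimizer of the type the corollary treats, with smooth boundary $\Gamma \setminus \Gamma^{(1)}$). This is essentially automatic from the statement of Theorem \ref{theo:hardt-simon-boundary-regularity}, but needs to be checked carefully; once confirmed, the induction runs without further difficulty, with all claims following from the local classification at boundary points.
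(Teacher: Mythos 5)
Your proposal is correct and is essentially the paper's own argument: the paper likewise peels off the multiplicity-one layer $\llbracket M\rrbracket$, uses the orientation compatibility in the density trichotomy to get $\Vert T\Vert = \Vert\llbracket M\rrbracket\Vert + \Vert T - \llbracket M\rrbracket\Vert$, invokes \cite[33.4]{Simon:GMT} so both pieces are minimizing, and runs the induction on the density shift by $1$. Your added care about the case (iii) orientation bookkeeping and the boundary identification $\partial(T-\llbracket M\rrbracket) = \llbracket\Gamma\setminus\Gamma^{(1)}\rrbracket$ fills in exactly what the paper leaves as ``a straightforward induction.''
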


\begin{lemma} \label{lemm:hardt-simon-uniqueness}
	Let $\Gamma$ and $T$ be as in Theorem \ref{theo:hardt-simon-boundary-regularity}, and $m$, $\Gamma^{(i)}$, $M^{(i)}$ be as in Corollary \ref{coro:hardt-simon-decomposition}. Let $\sigma > 0$ be small enough that $U_\sigma(\Gamma^{(1)}) \approx \Gamma^{(1)} \times \mathbb{D}^2$, and $\bar U_\sigma(\Gamma^{(1)}) \cap M^{(2)} = \emptyset$, and $\partial U_\sigma (\Gamma^{(1)})$ intersects $M^{(1)}$ transversely. Set 
	\[ M'^{(1)} := M^{(1)} \setminus U_\sigma(\Gamma^{(1)}), \; \Gamma'^{(1)} := M^{(1)} \cap \partial U_\sigma (\Gamma^{(1)}), \]
	\[ M'^{(i)} := M^{(i)}, \; \Gamma'^{(i)} := \Gamma^{(i)}, \; i = 2, \ldots, m. \]
	Then, $T' := \sum_{i=1}^m \llbracket M'^{(i)} \rrbracket$ is uniquely minimizing with $\partial T' = \sum_{i=1}^m \llbracket \Gamma'^{(i)} \rrbracket$.
\end{lemma}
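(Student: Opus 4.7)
The plan is to prove that $T'$ is minimizing first, then prove it is uniquely so, in both cases by relating $T'$ back to the minimizer $T$ via the nested decomposition in Corollary \ref{coro:hardt-simon-decomposition}. The hypothesis $\bar U_\sigma(\Gamma^{(1)}) \cap M^{(2)} = \emptyset$ together with the nesting \eqref{eq:hardt-simon-decomposition.nested} gives $T - T' = \llbracket M^{(1)} \cap \bar U_\sigma(\Gamma^{(1)}) \rrbracket$ and, since the $\llbracket M'^{(i)} \rrbracket$ are consistently oriented, the pointwise densities split to yield
\[ \Vert T \Vert = \Vert T' \Vert + \Vert T - T' \Vert. \]
For any competitor $S \in \II_n(\RR^{n+1})$ with $\partial S = \partial T' = \sum_i \llbracket \Gamma'^{(i)} \rrbracket$, form $\hat S := S + (T - T')$. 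Then $\partial \hat S = \llbracket \Gamma \rrbracket$, and the chain
\[ \Vert T \Vert \leq \Vert \hat S \Vert \leq \Vert S \Vert + \Vert T - T' \Vert, \]
using the minimality of $T$ and the triangle inequality, together with the displayed identity, gives $\Vert T' \Vert \leq \Vert S \Vert$, proving that $T'$ is minimizing.

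For uniqueness I would assume $\Vert S \Vert = \Vert T' \Vert$ and show $S = T'$. Equality throughout the chain above forces $\hat S$ to be itself minimizing with $\partial \hat S = \llbracket \Gamma \rrbracket$, and the triangle-inequality equality $\Vert \hat S \Vert = \Vert S \Vert + \Vert T - T' \Vert$ forces $S$ and $T - T' = \llbracket M^{(1)} \cap \bar U_\sigma \rrbracket$ to carry matching orientation $\cH^n$-a.e.\ wherever both are supported. Next, apply Corollary \ref{coro:hardt-simon-decomposition} to $\hat S$ to obtain smooth nested layers $\hat M^{(1)}, \ldots, \hat M^{(\hat m)}$. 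Near any point of $\Gamma^{(1)}$, the piece $\llbracket M^{(1)} \cap \bar U_\sigma \rrbracket$ embedded in $\hat S$, combined with the density-$\tfrac12$ Hardt--Simon boundary structure of the outermost layer, forces $\hat M^{(1)}$ to be smooth near $\Gamma^{(1)}$ and to agree with $M^{(1)}$ on a nonempty relatively open subset of $\reg M^{(1)}$.

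I then invoke unique continuation for smooth minimal hypersurfaces in $\RR^{n+1}$, together with the connectedness of $\reg M^{(1)}$ (via Lemma \ref{lemm:reg.connected} applied to $\llbracket M^{(1)} \rrbracket$ away from $\Gamma^{(1)}$), to deduce $\hat M^{(1)} = M^{(1)}$. After peeling off $\llbracket M^{(1)} \rrbracket$, the difference $\hat S - \llbracket M^{(1)} \rrbracket$ is a minimizing current with boundary $\sum_{i \geq 2} \llbracket \Gamma^{(i)} \rrbracket$ that coincides with $S - \llbracket M'^{(1)} \rrbracket$ outside $\bar U_\sigma$. Since $M'^{(i)} = M^{(i)}$ and $\Gamma'^{(i)} = \Gamma^{(i)}$ for $i \geq 2$ and $\bar U_\sigma \cap M^{(i)} = \emptyset$, the same argument iterates inward on the deeper layers, yielding $\hat M^{(i)} = M^{(i)}$ for each $i$. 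Once all layers match, $\hat S = T$ and therefore $S = T'$.

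The main obstacle I expect is the layer-by-layer identification $\hat M^{(i)} = M^{(i)}$ for $i \geq 2$, since these deeper layers may themselves carry singularities of dimension up to $n-7$ and the matching has to be carried out within the multiplicity framework; this step parallels the cut-and-paste decomposition of \cite{White:boundary-regularity-multiplicity} used in the proof of Lemma \ref{lemm:disjoint}. Everything else follows directly from the mass identity, the Hardt--Simon boundary regularity, and the unique continuation for smooth minimal hypersurfaces.
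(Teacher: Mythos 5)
Your proof that $T'$ is minimizing is correct; it is the standard cut-and-paste argument behind the paper's citation of \cite[33.4]{Simon:GMT}. The uniqueness argument, however, has a genuine gap at its key step. You assert that the recombined minimizer $\hat S = S + (T - T')$ has the multiplicity-one, density-$\tfrac12$ Hardt--Simon boundary structure along $\Gamma^{(1)}$, and from this you identify its outermost layer with $M^{(1)}$ near $\Gamma^{(1)}$. But $\Theta_{\hat S}(\bx) = \tfrac12 + \Theta_S(\bx)$ for $\bx \in \Gamma^{(1)}$ (densities add because your mass-additivity identity rules out cancellation, and $\partial S$ is supported away from $\Gamma^{(1)}$), and nothing you have established prevents $\spt S$ from reaching $\Gamma^{(1)}$ with $\Theta_S \geq 1$ there; the convex hull of $\spt \partial S$ can easily contain $\Gamma^{(1)}$. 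In that scenario $\Theta_{\hat S} \geq \tfrac32$ along $\Gamma^{(1)}$, the two-sheeted higher-multiplicity structure of case (ii) of Theorem \ref{theo:hardt-simon-boundary-regularity} applies instead, the outermost layer of $\hat S$ is a manifold \emph{without} boundary passing through $\Gamma^{(1)}$ (so it cannot equal $M^{(1)}$), and the peeling/unique-continuation induction collapses. Concretely, the unexcluded configuration is that $M^{(1)}$ extends smoothly as a minimal hypersurface across $\Gamma^{(1)}$ and $S$ carries some multiple of that extension; excluding such a coincidence-versus-non-coincidence ambiguity between two minimizers with the same boundary is the real content of the lemma, so the step you skip is essentially equivalent to what is to be proved. (A smaller point: the lemma does not assume $\spt T$ connected, so Lemma \ref{lemm:reg.connected} does not directly give connectedness of $\reg M^{(1)}$; one must argue component by component.)

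The paper closes exactly this gap by a different mechanism: it compares $T'$ with an arbitrary minimizer $\tilde T'$ of the same boundary at an \emph{extremal} component $\Gamma'_e$ of $\partial T'$, where Theorem \ref{theo:hardt-simon-boundary-regularity} forces both currents to be smooth multiplicity-one manifolds-with-boundary; after noting that $T' + \tilde T'$ is minimizing by \cite[Corollary 3]{White:boundary-regularity-multiplicity}, \cite[Corollary 2]{White:boundary-regularity-multiplicity} yields the dichotomy that $T'$ and $\tilde T'$ either coincide or meet transversely along $\Gamma'_e$, and the transverse case is excluded because it would endow the minimizer $T - T' + \tilde T'$ with an interior codimension-one singular set, contradicting the $\leq n-7$ dimension bound. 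Only then does unique continuation propagate the coincidence, component by component. Some version of this transversality-versus-coincidence argument (or another global input of comparable strength) is needed to complete your proof; the local boundary structure theory alone does not supply it.
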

\begin{proof}
	Note, first, that $T'$ is indeed minimizing (\cite[Lemma 33.4]{Simon:GMT}). Now let $\tilde T' \in \II_n(\RR^{n+1})$ be any minimizer with $\partial \tilde T' = \partial T'$. By \cite[Corollary 3]{White:boundary-regularity-multiplicity}, $T' + \tilde T'$ is minimizing. It follows from the Hardt--Simon boundary regularity theorem (Theorem \ref{theo:hardt-simon-boundary-regularity}) that $\tilde T'$ is smooth with multiplicity one near an extremal component $\Gamma'_e \subset \cup_i \Gamma'^{(i)}$ of the boundary. Then, \cite[Corollary 2]{White:boundary-regularity-multiplicity} ensures that the only possible relative configuration of $T'$ and $\tilde T'$ near $\Gamma'_e$ is that they \textit{coincide} or they \textit{meet transversely}. Note that $\MM(T - T' + \tilde T') \leq \MM(T)$, so $T - T' + \tilde T'$ is minimizing for the original $\Gamma$. However, if $T'$ and $\tilde T'$ meet transversely along $\Gamma'_e$, then $T - T' + \tilde T'$ has a codimension-$1$ singular set along $\Gamma'_e$. It is not hard to see that $\Gamma'_e \not \subset \cup_{j=2}^m\Gamma^{(2)}$, so in particular $T - T' + \tilde T'$ has an interior codimension-$1$ singular set along $\Gamma’_e$, which contradicts that the interior singular set of $T - T' + \tilde T’$ has Hausdorff dimension $\leq n-7$. Thus, $T'$ and $\tilde T'$ coincide near $\Gamma'_e$. By unique continuation $T' \mres C = \tilde T' \mres C$ along the component $C \subset \spt T'$ containing $\Gamma'_e$. We then repeat the argument with $T - (T' \mres C)$ in place of $T$ and $T' - (T' \mres C)$ in place of $T'$ (both minimizing by \cite[Lemma 33.4]{Simon:GMT}) until we have no more components left. This process terminates after finitely many steps because the number of components of $\spt \partial T$ decreases at each iteration.
\end{proof}

\section{Positive Jacobi fields on nonflat minimizing cones}  \label{app:positive.jacobi.fields.cones}

We collect some useful results from \cite{Simon:asymptotic-decay} and \cite[Appendix A]{Wang:smoothing} on minimizing cones $\cC^n \subset \RR^{n+1}$ with possibly singular links $\Sigma^{n-1}(\cC) = \cC \cap \partial B_1(\bOh)$. 

For $\cC$, $\Sigma = \Sigma(\cC)$ fixed, and compact smooth domains $\Omega \subset \reg\Sigma$, define
\[
\lambda_1(\Omega) : = \inf\left\{ \int_D |\nabla_\Sigma\varphi|^2 - |A_\Sigma|^2\varphi^2 : \varphi \in C^1_c(\Omega), \Vert \varphi\Vert_{L^2(\Omega)} = 1\right\}\, ,
\]
where $A_\Sigma$ is the second fundamental form of $\reg \Sigma$ in $\partial B_1(\bOh)$. We denote with $\varphi_\Omega$ the unique positive first eigenfunction with $\Vert \varphi_\Omega \Vert_{L^2(\Omega)}=1$. Recall that this means that $\varphi_\Omega$ solves
\begin{equation*}
 \begin{cases}
  -(\Delta_\Sigma + |A_\Sigma|) \varphi_\Omega = \lambda_1(\Omega) \varphi_\Omega \quad & \text{on}\ \Omega\, ,\\
\qquad\qquad \qquad \,  \varphi_\Omega = 0 &\text{on}\ \partial \Omega \, .
\end{cases}
\end{equation*}
Finally, we denote for $\delta > 0$
\begin{equation} \label{eq:r.link}
	\cR_{\geq \delta}(\Sigma) : = \cR_{\geq \delta}(\cC) \cap \partial B_1(\bOh).
\end{equation}

The following is a consequence of the eigenvalue estimate of \cite{Simons:minvar} and \cite{Zhu:eigenvalue}.

\begin{lemma}[\text{\cite[Lemma A.1]{Wang:smoothing}}]\label{lem:Wang-1}
There is $\rho_0=\rho_0(n)>0$ so that
\[ \lambda_1(\cR_{\geq 2\rho_0}(\Sigma)) \leq -(n-1) \]
for all nonflat minimizing cones $\cC^n \subset \RR^{n+1}$ and $\Sigma = \Sigma(\cC)$.
\end{lemma}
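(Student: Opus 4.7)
My plan is to exhibit a Rayleigh-quotient competitor $\varphi \in C_c^1(\cR_{\geq 2\rho_0}(\Sigma))$ for the Jacobi operator $L_\Sigma := -\Delta_\Sigma - |A_\Sigma|^2$ attaining Rayleigh quotient $\leq -(n-1)$. The natural candidate is $\varphi := \chi \, |A_\Sigma|$ for a suitable cutoff $\chi$, since Simons' identity makes $|A_\Sigma|$ a near-eigenfunction of $L_\Sigma$ with eigenvalue $-(n-1)$.

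First I would derive the pointwise differential inequality. Since $\Sigma \subset S^n$ is a minimal hypersurface, Simons' identity gives
\[ \tfrac12 \Delta_\Sigma |A_\Sigma|^2 = |\nabla_\Sigma A_\Sigma|^2 - |A_\Sigma|^4 + (n-1)|A_\Sigma|^2, \]
and combined with the refined Kato inequality $|\nabla_\Sigma A_\Sigma|^2 \geq (1+\kappa_n)|\nabla_\Sigma |A_\Sigma||^2$ (with $\kappa_n = 2/(n-1) > 0$ for minimal hypersurfaces), this yields, on $\reg\Sigma \cap \{|A_\Sigma|>0\}$, the sharpened pointwise inequality
\[ L_\Sigma |A_\Sigma| + (n-1)|A_\Sigma| \leq -\kappa_n \frac{|\nabla_\Sigma|A_\Sigma||^2}{|A_\Sigma|}. \]

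Next I would localize to $\cR_{\geq 2\rho_0}(\Sigma)$ using a smooth cutoff $\chi$ supported there, equal to $1$ on $\cR_{\geq 4\rho_0}(\Sigma)$, with $|\nabla_\Sigma \chi| \leq C\rho_0^{-1}$. Testing against $\varphi = \chi|A_\Sigma|$, integration by parts combined with the pointwise inequality above yields
\[ \int_\Sigma \big(|\nabla_\Sigma \varphi|^2 - |A_\Sigma|^2 \varphi^2\big) \leq -(n-1)\int_\Sigma \varphi^2 - \kappa_n \int_\Sigma \chi^2 |\nabla_\Sigma |A_\Sigma||^2 + \int_\Sigma |\nabla_\Sigma \chi|^2 |A_\Sigma|^2, \]
so $\lambda_1(\cR_{\geq 2\rho_0}(\Sigma)) \leq -(n-1) + \cE(\chi, \cC)$, where $\cE$ is the ratio of $\int |\nabla_\Sigma \chi|^2 |A_\Sigma|^2 - \kappa_n \int \chi^2 |\nabla_\Sigma|A_\Sigma||^2$ to $\int \chi^2 |A_\Sigma|^2$; it suffices to force $\cE \leq 0$.

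The main obstacle is ensuring $\cE \leq 0$ uniformly across all nonflat minimizing cones. Allard's $\epsilon$-regularity gives $|A_\Sigma| \leq Cr_\Sigma^{-1}$, and Federer dimension reduction yields $\dim_{\cH}\sing\Sigma \leq n-8$, so the cutoff annulus $\{r_\Sigma \sim \rho_0\}$ has $(n-1)$-measure $\leq C\rho_0^{7}$, bounding the cutoff-error numerator by $C\rho_0^{3}$. Uniform positivity of the denominator $\int \chi^2|A_\Sigma|^2$ (and of the Kato-improvement term when $|A_\Sigma|$ is non-constant) across the moduli of nonflat minimizing cones follows from a Federer--Fleming compactness argument modulo rotations and dilations, leveraging the lower density bound $\Theta_\cC(\bOh)\geq \Theta_n^*$ to ensure any subsequential limit remains nonflat and hence does not degenerate. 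For the ``degenerate'' case where $|A_\Sigma|$ is constant on $\reg \Sigma$ (Simons-type cones), $\Sigma$ is in fact smooth, $\cR_{\geq 2\rho_0}(\Sigma)=\Sigma$ for small $\rho_0$, and testing with the constant $|A_\Sigma|$ directly yields $\lambda_1 = -(n-1)$. Combining these, for $\rho_0 = \rho_0(n)$ chosen sufficiently small, the Kato term dominates the cutoff error and one obtains $\cE \leq 0$, hence the sharp bound $\lambda_1(\cR_{\geq 2\rho_0}(\Sigma))\leq -(n-1)$.
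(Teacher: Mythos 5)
Your core computation (Simons' identity plus Kato, tested against $\varphi=\chi|A_\Sigma|$) is the right engine --- it is exactly the Simons/Zhu test-function argument that the paper is invoking by citation --- and the algebra leading to $\lambda_1(\cR_{\geq 2\rho_0}(\Sigma))\leq -(n-1)+\cE$ is correct. But the two quantitative claims you use to force $\cE\leq 0$ both have genuine problems. First, the bound $\cH^{n-1}(\{r_\Sigma\sim\rho_0\})\leq C\rho_0^7$ does \emph{not} follow from Federer's dimension reduction: a Hausdorff dimension bound on $\sing\Sigma$ carries no Minkowski-content information, and without a Minkowski-type estimate the cutoff error $\int|\nabla_\Sigma\chi|^2|A_\Sigma|^2\sim\rho_0^{-4}\cH^{n-1}(\{r_\Sigma\sim\rho_0\})$ is simply uncontrolled (one only gets $o(1)\cdot\rho_0^{-4}$). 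Such an estimate is available, but only via the Naber--Valtorta theory, a much heavier input than what you cite, and one must also track uniformity of the constant over the moduli of minimizing cones.

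Second, and more seriously, the claimed uniform positive lower bound on $\int\chi^2|\nabla_\Sigma|A_\Sigma||^2$ over nonflat minimizing cones with non-constant $|A_\Sigma|$ is false: that family is not closed under the Federer--Fleming compactness you invoke, since its weak closure contains the Clifford/Simons-type cones where $|\nabla_\Sigma|A_\Sigma||\equiv 0$. So your dichotomy (``$|A_\Sigma|$ constant'' versus ``compactness gives a uniform lower bound'') is not exhaustive, and for cones near, but not equal to, a Simons cone your Kato term can be arbitrarily small while the cutoff error is merely small --- the argument gives no comparison of rates. Note also that the refined Kato improvement is not needed for the non-strict bound $-(n-1)$ (ordinary Kato already yields it on a smooth closed link with no cutoff at all); you are using it only to absorb the cutoff error, which is precisely the step that fails. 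The standard repair, and in effect what \cite[Lemma A.1]{Wang:smoothing} does, is to run the contradiction/compactness at the level of the eigenvalue itself: counterexample cones $\cC_j$ with $\lambda_1(\cR_{\geq 2/j}(\Sigma_j))>-(n-1)$ subconverge to a nonflat cone $\cC_\infty$ (nonflat by $\Theta\geq\Theta_n^*$ and upper semicontinuity of density), and transplanting test functions gives $\lambda_1(\reg\Sigma_\infty)\geq -(n-1)$; if $\Sigma_\infty$ is singular this contradicts the strict inequality in Zhu's estimate (whose rigidity case forces a smooth Clifford link), while if $\Sigma_\infty$ is smooth then Allard makes $\Sigma_j$ smooth with $\cR_{\geq 2/j}(\Sigma_j)=\Sigma_j$ for large $j$, and the classical closed-manifold Simons computation (integrated identity, no cutoff) gives $\lambda_1(\Sigma_j)\leq -(n-1)$, again a contradiction.
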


\begin{lemma}[\text{cf. \cite[Lemma A.2]{Wang:smoothing}}] \label{lem:Wang-2}
Suppose $\rho_0$ is as in Lemma \ref{lem:Wang-1}, $\cC^n \subset \RR^{n+1}$ is a nonflat minimizing cone, $\Sigma = \Sigma(\cC)$, and $\Omega$ is a smooth compact domain with $\cR_{\geq \rho_0}(\Sigma) \subset \Omega \subset \reg \Sigma$ and with principal Dirichlet eigenfunction $\varphi_\Omega > 0$. If $u$ is any positive Jacobi field on $\reg \cC$ (as in Definition \ref{defi:jacobi.field}), then the function
\[
V(r) : = \int_\Omega \varphi_\Omega(\omega) u(r\omega) d\mu_\Sigma(\omega)
\]
satisfies
\[ (V(r) r^{\kappa_n^*})' \leq 0 \]
where 
\begin{equation*}
\kappa_n^* : = \frac{n-2}{2} - \sqrt{\frac{(n-2)^2}{4} - (n-1)} . 
\end{equation*}
In particular, $V(r) \leq  r^{-\kappa_n^*}V(1)$ for $r\geq 1$. 
\end{lemma}
\begin{proof}
	This is a direct consequence of \cite[(A.3)-(A.4)]{Wang:smoothing}.
\end{proof}

\bibliographystyle{alpha}
\bibliography{generic-min-surf-9-10}
\end{document}